\DeclareMathOperator{\spt}{spt}
\DeclareMathOperator{\esssup}{ess\ sup}
\newcommand{\Lam}{\Lambda}
\newcommand{\Om}{\Omega}
\newcommand{\tht}{\theta}
\newcommand{\Tht}{\Theta}
\newcommand{\al}{\alpha}
\newcommand{\eps}{\epsilon}
\newcommand{\be}{\beta}
\newcommand{\De}{\Delta}
\newcommand{\de}{\delta}
\newcommand{\s}{\sigma}
\newcommand{\gam}{\gamma}
\newcommand{\kap}{\kappa}
\newcommand{\z}{\zeta}
\newcommand{\N}{\mathbb{N}}
\newcommand{\R}{\mathbb{R}}
\newcommand{\T}{\mathbb{T}}
\newcommand{\Z}{\mathbb{Z}}
\newcommand{\A}{\mathcal{A}}
\newcommand{\del}{\nabla}
\newcommand{\bdy}{\partial}
\newcommand{\Ri}{\mathcal{R}}
\newcommand{\til}[1]{\widetilde{#1}}
\newcommand{\lp}{\triangle}
\newcommand{\lpj}{\triangle_j}
\newcommand{\lpk}{\triangle_k}
\newcommand{\lpl}{\triangle_\ell}
\newcommand{\smod}{\setminus}
\newcommand{\sub}{\subset}
\newcommand{\ls}{\lesssim}
\newcommand{\gs}{\gtrsim}
\newcommand{\goesto}{\rightarrow}
\newcommand{\imb}{\hookrightarrow}
\newcommand{\Sch}{\mathcal{S}}
\newcommand{\Hper}{H_{{per}}^\s}
\newcommand{\Hdper}{\dot{H}_{{per}}^\s}
\newcommand{\Sob}[2]{\lVert#1\rVert_{#2}}
\newcommand{\abs}[1]{\lvert#1\rvert}
\newcommand{\lb}{\langle}
\newcommand{\rb}{\rangle}
\newcommand{\ind}{\mathbbm{1}}
\newcommand{\req}[1]{(\ref{#1})}
\newcommand{\FLp}{F_{L^p}}
\newcommand{\FHg}{F_{H^{-\gam/2}}}
\newcommand{\TLt}{\Tht_{L^2}}
\newcommand{\TLp}{\Tht_{L^p}}
\newcommand{\THs}{\Tht_{H^\s}}
\newtheorem{thm}{Theorem}
\newtheorem{prop}{Proposition}
\newtheorem{lem}[prop]{Lemma}
\newtheorem{rmk}{Remark}[section]
\newtheorem{coro}[prop]{Corollary}
\newtheorem{coro1}[thm]{Corollary}
\newtheorem*{sett}{Standing Hypotheses}
\numberwithin{equation}{section}
\title[a data assimilation algorithm for the subcritical SQG equation]
{A data assimilation algorithm for the subcritical surface quasi-geostrophic equation}
\author{Michael S. Jolly$^{1}$}
\address{$^{1}$Department of Mathematics\\
Indiana University\\ Bloomington, IN 47405}
\email[M.S. Jolly]{msjolly@indiana.edu}
\author{Vincent R. Martinez$^{2, \dagger}$}
\address{$^{2}$Department of Mathematics\\
Tulane University\\ New Orleans, LA 70118}
\email[V. R. Martinez]{vmartin6@tulane.edu}
\author{Edriss S. Titi$^{3}$}
\address{$^3$ Department of Mathematics \\
Texas A\& M University \\
College Station, TX 77843-3368\\
Also:
Department of Computer Science and Applied Mathematics\\
Weizmann Institute of Science\\
Rehovot, 76100, Israel}
\email[E.S. Titi]{titi@math.tamu.edu}
\thanks{$\dagger$ denotes corresponding author}
\begin{document}
\date{\today}
\maketitle

%
%\begin{enumerate}
%	%\item streamfunction 3d; convergence?
%	\item more details for partition of unity
%\end{enumerate}

\centerline{\it  This paper is dedicated to the memory of Professor Abbas Bahri.}

\begin{abstract}
In this article, we prove that data assimilation by feedback nudging can be achieved for the three-dimensional quasi-geostrophic equation in a simplified scenario using only large spatial scale observables on the dynamical boundary.   On this boundary, a scalar unknown (buoyancy or surface temperature of the fluid) satisfies the surface quasi-geostrophic equation.  The feedback nudging is done on this two-dimensional model, yet ultimately synchronizes the streamfunction of the three-dimensional flow.  The main analytical difficulties are due to the presence of a nonlocal dissipative operator in the surface quasi-geostrophic equation.  This is overcome by exploiting a suitable partition of unity, the modulus of continuity characterization of Sobolev space norms, and the Littlewood-Paley decomposition to ultimately establish various boundedness and approximation-of-identity properties for the observation operators.
%%A feedback control mechanism for dissipative dynamical systems is studied for the three-dimensional quasi-geostrophic equation in a simplified scenario.  This mechanism can be used to define a model which induces a nudging-based algorithm for data assimilation which requires only large spatial scale observables.  In particular, it is shown through this model that only observations on the dynamical boundary condition of the quasi-geostrophic equation, also known as the surface quasi-geostrophic equation, are required for the scheme to ultimately synchronize the streamfunction of the three-dimensional flow, to which the data corresponds, with the streamfunction of the approximating solution given by the nudging model.  The main analytical difficulties in establishing this convergence arise from the presence of a nonlocal dissipative operator that appears in the surface quasi-geostrophic equation.  This necessitates the derivation of various boundedness and approximation properties for the observation operators used in the defining model of the algorithm. 

%Since the SQG equation models the evolution of, for instance, the surface temperature distribution of a rapidly rotating, stably stratified fluid, this article provides an example of a case where  measurements on only the surface of the three-dimensional domain are employed to recover the streamfunction of the reference solution to the corresponding three-dimensional interior flow.

{\flushleft\textbf{Keywords.} data assimilation, nudging, surface measurements, quasi-geostrophic and surface quasi-geostrophic equation, fractional Poincar\'e inequalities}

\end{abstract}

%\centerline{\it to ODEs and infinite-dimensional dynamical systems theory, a great friend, collaborator and teacher.}

\section{Introduction and Main Results}\label{sect:intro}

Continuous data assimilation dates back to 1960s with the idea of Charney, Halem, and Jastrow in \cite{chj}, where they proposed that the equations of the atmosphere be used to process the observations, which were collected essentially continuously in time, in order to refine estimates for the current atmospheric state.  Assuming that these equations represent the observed reality exactly, one can then use the improved estimate for the current state as initial data with which to integrate the model in time and thus, provide more accurate forecasts.

The data assimilation approach studied in this article is inspired by the one proposed in \cite{azouani:titi, azouani:olson:titi}, where the large scale observations are inserted in the physical model through the addition of a feedback control term that serves to relax the the solution of the modified algorithm towards the reference solution of the original system associated with the data.  Indeed, suppose that perfect, coarse spatial scale measurements, $J_h(v)$, are given, where $h>0$ represents the spatial mesh size of the measurements and $v$ represents a reference solution that evolved from some \textit{unknown} initial value $v(0)=v_0$ according to the evolution equation
	\begin{align}\label{ref:eqn}
		\frac{dv}{dt}=F(v).
	\end{align}
In this context, one can more generally view the observation operator, $J_h$, as an interpolation operator, i.e., a finite-rank linear operator satisfying certain approximation estimates (see section \ref{sect:inter}).  Then to ``recover" $v$ forward in time, one finds an approximating solution, $w(t)$, which solves the following initial value problem:
	\begin{align}\label{gen:da}
		\frac{dw}{dt}=F(w)-\mu (J_h(w)-J_h(v)),\quad w(0)=w_0,
	\end{align}
where $w_0$ is \textit{arbitrary} and $\mu=\mu(h)>0$ is the ``nudging parameter."  The approximating solution, $w$, converges to the reference solution at an exponential rate depending on the size of $\mu$.  We remark that some of the important features of this algorithm are that it can be initialized arbitrarily, e.g. $w_0$ \textit{identically zero}, and that $J_h$ can be defined in a sufficiently general manner to accommodate observables in the form of Fourier modes, local spatial averages, nodal values, and suitable modifications of these.  Indeed, in the original algorithm of Charney, Halem, Jastrow, the observables were inserted into the nonlinear term of the equation, which made certain choices of $J_h$ difficult to implement.  We also remark that in the case of noisy observations, i.e., $J_h(v)$ replaced by $J_h(v+\dot{W})$ in \req{gen:da}, where $W$ is a white noise in time, then the resulting stochastic partial differential equation can in fact be viewed as the continuous-time limit of the 3DVAR filter (cf. \cite{blsz}).
%Note that the algorithm thus described inserts the data continuously in time and directly into the model as a forcing feedback term with the intent that the coarse/large spatial scales of $w$ be nudged towards that of $v$ as one moves forward in time, and at a rate depending on the size of nuding parameter $\mu$.  

The synchronizing model \req{gen:da} has been analyzed for several important physical models including the one-dimensional (1D) Chaffee-Infante equation, two-dimensional (2D) Navier-Stokes (NS) equation, 2D Boussinesq, three-dimensional (3D) Brinkman-Forchheimer extended Darcy equations, 3D B\'enard convection in porous media, and 3D NS $\alpha$-model, (cf. \cite{alb:lopes:titi, azouani:titi, azouani:olson:titi, farhat:jolly:titi, farhat:lunasin:titi1, farhat:lunasin:titi2, farhat:lunasin:titi3, mark:titi:trab}), while the case of contaminated measurements was studied in \cite{blsz} for measurements given by Fourier modes, and \cite{bessaih:olson:titi} for the case of more general classes of measurements, including those given as local spatial averages or nodal values.  More recently, \req{gen:da} has been adapted to accommodate insertion of spatial data which is discrete-in-time \cite{foias:mondaini:titi} or blurred-in-time by a time-averaging process \cite{jolly:martinez:olson:titi}.  It is important to note that each of these cases present their own different analytical difficulties or have structural features that allow one to reduce measurements to a certain component.  Other studies at the level of the partial differential equation (PDE) for variational or bayesian approaches to data assimilation can be found, for instance, in \cite{bllmss, hls}, while avenues to improve them were explored in, for instance, \cite{auroux:bansart:blum, law:alonso:shukla:stuart}.  Lastly, we emphasize that of the studies taken thus far for the nudging model \req{gen:da}, this article is the first to address the mathematical difficulties brought on by the appearance of a nonlocal dissipation operator whose dissipativity is weaker than that provided by the full Laplacian.  This operator appears naturally, albeit for mathematical analytical reasons, in the surface quasi-geostrophic (SQG) equation defined below in \req{sqg} (cf. \cite{desj:gren98}).  To overcome these difficulties, we establish various boundedness and approximation properties for $J_h$ in the scale of Lebesgue spaces and fractional Sobolev spaces.  We believe that the elementary nature of the estimates gives a flexibility to these properties that should prove useful in further applications or other contexts.%one of the important features of this algorithm is that the initial data for this companion system can be taken as arbitrary, e.g., \textit{identically zero}.

This particular feedback control approach to data assimilation was inspired by the fact that many dissipative systems, e.g., 2D NS equations, possess a finite number of \textit{determining parameters}, which is to say that if a projection of the difference of two solutions converge asymptotically in time to $0$, then the difference of the solutions themselves must also converge to $0$.  This property was established for the 2D NS equations in the seminal paper of Foias and Prodi \cite{foias:prodi} in the modal case and then later extended to the nodal and volume elements case in \cite{foias:temam:nodes, jones:titi:nodes, jones:titi:volelts, jones:titi}.  The case of general determining interpolants was introduced and investigated in \cite{cjt1, cjt2}.  
%Recently in \cite{chesk:dai:3Ddm}, a notion of determining modes was established for solutions to the 3D NS equations.
%The property of finite determining parameters is also a crucial ingredient for \textit{determining forms}, which are ordinary differential equations in spaces of trajectories in which solutions on the global attractor of the original system can be readily identified (cf. \cite{fkjt1, fkjt2, jolly:sadigov:titi}).

The physical model of interest in this article is a special case of the 3D quasi-geostrophic (QG) equation on a half-space.  The 3D QG equation asserts the conservation of ``potential vorticity" subject to a dynamical boundary condition.  It is valid in the regime of strong rotation, where the time scales associated with atmospheric flow over long distances are much larger than the time scales associated with the Earth's rotation, i.e., low Rossby number.  It is the simplest model of such fluids with nontrivial dynamics that describes the departure from geostrophically balanced flows, i.e., where the Coriolis force and the horizontal pressure gradient are in balance (cf. \cite{pedlosky}).  An interesting consequence of this balance is that it imposes a planar dimensionality on the corresponding solution, and is thus the source for the strong 2D features of the otherwise 3D flow (cf. \cite{const:qg:spectra}).  In the simplified scenario where the potential vorticity, $q$, is advected along material lines, one has that $q\equiv 0$ solves its evolution equation exactly, but retains the evolution of its streamfunction through its boundary condition.  Stated in \textit{non-dimensionalized} variables and in terms of the potential vorticity streamfunction, $\Psi=\Psi(\mathbf{x}, t)$, $\mathbf{x}=(x,y,z)$, we have
	\begin{align}\label{stream}
		\De_{3D}\Psi=0,\quad\text{in}\ \Om\times\{z>0\}\times\{t>0\},\quad\frac{\bdy}{\bdy z}\Psi(\cdotp,0,t)=\tht(\cdotp,t),\quad \lim_{z\goesto\infty}\Psi(\cdotp, z,t)=0,
	\end{align}
where $\De_{3D}=\bdy_x^2+\bdy_y^2+\bdy_z^2$ and the Neumann boundary condition, $\tht$, satisfies the forced, dissipative SQG equation:
\begin{align}\label{sqg}
			\bdy_t\tht+\kap\Lam^\gam\tht+u\cdotp{\del}\tht=f,\quad u=\Ri^\perp\tht,
	\end{align}
and both  $\Psi$ and $\tht$ are subjected to periodic boundary conditions in the horizontal variables $x,y$ over the fundamental periodic domain $\Om=[-\pi,\pi]^2$.  Here, $\gam\in(0,2)$ and $\tht$ represents the scalar surface temperature or buoyancy of a fluid, which is advected along the velocity vector field, $u$, and $f$ is a given source term.  The external forcing can be time-dependent provided that it is bounded in time with values in the relevant spatial space; for simplicity, we will deal with the case of $f$ being time-independent.  The velocity is related to $\tht$ by a Riesz transform, $\Ri^\perp:=(-R_2,R_1)$, where the symbol of $R_j$ is given by $i\xi_j/\abs{\xi}$.  For $0<\gam\leq2$, we denote the dissipative operator by $\Lam^\gam:=(-\De)^{\gam/2}$, which is the operator whose symbol is $\abs{\xi}^\gam$; it appears with $\gam=1$ when accounting for viscous drag at the boundary, i.e., Ekman pumping, (cf. \cite{const:qg:spectra, desj:gren98}).  In this paper, we will concern ourselves with the \textit{subcritically diffusive} case, $\gam\in(1,2)$, which still preserves the analytical difficulties that we will encounter arising from the nature of its nonlocality, but simplifies the issue of well-posedness for the sychronizing model \req{fb:sqg}.  We will assume that $\tht_0, f$ are $\pi$-periodic in $\Om$ with mean zero.  In particular, $\tht$ is also $\pi$-periodic with mean zero since the mean-zero property is preserved by the evolution.

%The full quasigeostrophic (QG) equation, from which the inviscid counterpart of \req{sqg} is derived, assert the conservation of ``potential vorticity" subject to dynamical boundary conditions.  They are valid in the regime of strong rotation, where the time scales associated with atmospheric flow over long distances are much larger than the time scales associated with the Earth's rotation, i.e., low Rossby number.  It is the simplest model of such fluids with nontrivial dynamics that describes the departure from geostrophically balanced flows, i.e., where the Coriolis force and the horizontal pressure gradient are in balance (cf. \cite{pedlosky}).  An interesting consequence of this balance is that it imposes a planar dimensionality on the corresponding solution, and is thus the source for the strong two-dimensional features of the fully three-dimensional flow (cf. \cite{const:qg:spectra}).  In the case where the potential vorticity is identically zero and vertical motion is eliminated, one simply obtains the evolution of the boundary condition, resulting in the SQG equation, \req{sqg}.  In particular, given $\tht$ satisfying \req{sqg}, the corresponding streamfunction, $\Psi=\Psi(\mathbf{x}, t)$, $\mathbf{x}=(x,y,z)$, of the QG equation is harmonic, i.e.,
%	\begin{align}\label{stream}
%		\De_{3D}\Psi=0,\quad\text{in}\ \Om\times\{z>0\}\times\{t>0\},\quad\frac{\bdy}{\bdy z}\Psi(x,y,0,t)=\tht(x,y,t),\quad \lim_{z\goesto\infty}\Psi(\mathbf{x},t)=0,
%	\end{align}
%where $\De_{3D}=\bdy_x^2+\bdy_y^2+\bdy_z^2$ and $\Psi$ is periodic in the horizontal variables $x,y$.

While it is still not fully understood how to describe the motion of fluids in the regime of low Rossby number, the SQG equation has many interesting features which are relevant both physically and mathematically \cite{held}.  For instance, experiments \cite{qg:spectra:experiments} have shown that the energy spectra of rotating fluids exhibit a power law scaling different from the one derived through the 2D NS equations, but consistent with the one derived through the SQG equation \cite{const:qg:spectra}.  On the other hand, mathematically, the SQG equation has striking resemblance to the 3D NS equations.  Indeed, by applying $\del^\perp$ to \req{sqg} one uncovers a mechanism for vortex stretching analogous to the one found in the vorticity formulation of the 3D NS equations.  Thus, the SQG equation provides a two-dimensional scalar model which exhibits three-dimensional phenomena and challenges.  Since its introduction into the mathematical community by Constantin, Majda, and Tabak \cite{cmt}, the inviscid equation, along with its critical ($\gam=1$) and supercritical ($\gam<1$) counterparts, have been extensively studied, and by now, well-posedness in various function spaces and global regularity has been resolved in all but the supercritical case (cf. \cite{caff:vass, const:vic, const:wu:qgwksol, ctv, cz:vic, hdong, kis:naz, kis:naz:vol, resnick}).  The long-time behavior and existence of a global attractor has also been studied in both the subcritical and critical cases (cf. \cite{carrillo:ferreira, chesk:dai:qgattract, const:coti:vic, coti:zelati, ctv, ju:qgattract}).  In particular, in \cite{ju:qgattract}, Ju established the existence of a global attractor, $\A$, for \req{sqg} in the subcritical regime $\gam\in(1,2)$, which we recall in section \ref{sect:ball}.  In the context of the nudging scheme induced by \req{fb:sqg}, we will assume that the noiseless observables are sampled from an absorbing ball that contains $\A$ (see section \ref{sect:a priori}).

In this paper, data assimilation achieved through
	\begin{align}\label{fb:sqg}
			\bdy_t\eta+\kap\Lam^\gam\eta+v\cdotp{\del}\eta=f-\mu J_h(\eta-\tht),\quad v=\Ri^\perp\eta,
	\end{align}
subject to periodic boundary conditions over $\Om=[-\pi,\pi]^2$, as in \req{sqg}.  We recall that $\mu$ is the nudging parameter and $J_h$ with $h>0$ is an interpolating operator based on coarse spatial measurements that satisfies certain approximation properties.  We will first establish in Theorem \ref{thm1} the global existence and uniqueness of strong solutions to \req{fb:sqg} in a Sobolev class of functions for certain time-independent source terms, provided that $\mu, h$ are  {jointly} chosen appropriately.  The estimates required to prove Theorem \ref{thm1} will be performed in section \ref{sect:a priori}.
We then show that if $\mu$ is chosen large enough and $h$ is correspondingly taken sufficiently small, then the unique solution of \req{fb:sqg} synchronizes at an exponential rate with the reference solution, $\tht$, 
%for any $\gam\in(4/3,2)$ when the interpolating operator, $J_h$, is what we call \textit{Type I}.  However, when $J_h$ is \textit{Type II} we show that the synchronization property holds for the full range of subcritical dissipation, 
for any $\gam\in(1,2)$  (see section \ref{sect:inter}).  In particular, we show that %in the topology of square-integrable functions over $\Om$
	\begin{align}\label{intro:synch}
		\Sob{\eta(t)-\tht(t)}{L^2(\Om)}\goesto0,\quad\text{at an exponential rate as}\ t\goesto\infty.
	\end{align}
Convergence of the corresponding \textit{three-dimensional} streamfunctions then follows immediately from this (Corollary \ref{coro:stream}).  To see this, let $\Psi$ be given by \req{stream} and $\Psi_\eta$ be given by
	\begin{align}\label{stream:eta}
		\De_{3D}\Psi_\eta=0,\quad\text{in}\ \Om\times\{z>0\}\times\{t>0\},\quad\frac{\bdy}{\bdy z}\Psi_{\eta}(\cdotp,0,t)=\eta(\cdotp,t),\quad \lim_{z\goesto\infty}\Psi_\eta(\cdotp,z,t)=0.
	\end{align}
Then \req{intro:synch} implies
	\begin{align}\notag
		\Sob{\del(\Psi_\eta(t)-\Psi(t))}{L^2(\Om\times\R_+)}\goesto0,\quad\text{exponentially as}\ t\goesto\infty.
	\end{align}
Indeed, since the Dirichlet-to-Neumann map determines the relations $\Psi|_{z=0}=-\Lam^{-1}\tht$ and $\Psi|_{z=0}=-\Lam^{-1}\eta$, upon taking the $L^2$-scalar product of \req{stream} with $\Psi$, integrating by parts, and applying the boundary conditions it follows that
	\begin{align}\label{stream:synch}
		\Sob{\del(\Psi(t)-\Psi_\eta(t))}{L^2(\Om\times\R_+)}^2= \left|\int_{\Om}(\eta(t)-\tht(t))\Lam^{-1}(\eta(t)-\tht(t))\ dx dy\right|\leq C\Sob{\eta(t)-\tht(t)}{L^2(\Om)}^2,
	\end{align}
for some absolute constant $C>0$.  We refer to \cite{sobolev}, for example, for details regarding the Neumann problem.  One may thus conclude, at least in the simplified scenario described above, that  measurements \textit{on only the boundary} $\Om\times\{z=0\}\sub\R^3_+$ are required for the synchronization of the streamfunction over the entire 3D domain \req{fb:sqg}.  Studies on data assimilation on simple forecast models, in which some state variable observations are not available as input were carried out in \cite{ghil:shkoller:yangarber, ghil:halem:atlas}, for instance.  It was observed that although the nonlinearity in the models can mediate couplings across all length scales, the full state of the system can nevertheless be recovered by employing coarse mesh measurements of only some the state variables so long as one uses a good dynamical model and data assimilation algorithm (cf. \cite{chj, farhat:lunasin:titi3}).  Our result, therefore, rigorously confirms this understanding in a precise way, e.g., employing only surface measurements to recover the three-dimensional stream function,  through the SQG model \req{sqg} with the corresponding nudging equation \req{fb:sqg}.

\section{Preliminaries}\label{sect:prelim}

%As is typical in meteorology, we shall work with non-dimensionalized quantities.  Thus, upon rescaling by $x=(L/(2\pi))\bar{x}$ and $t=LU^{-1}\bar{t}$, and defining $f=UL^{-1}\bar{f}$ and $u=U(2\pi)\bar{u}$, then dropping the bar everywhere except the parameters, we may write \req{sqg} as
%	\begin{align}\label{sqg}
%		\bdy_t\tht+{\bar{\kap}}\Lam^{\gam}\tht+u\cdotp{\del} \tht=f,\quad u=\Ri^\perp\tht,\quad \tht(x,0)=\tht_0(x).
%	\end{align}
%where ${\bar{\kap}}:=(2\pi)^{\gam}U^{-1}L^{1-\gam}\kap$.  Observe that ${{\bar{\kap}}}$ is non-dimensional  and that the spatial domain is now $\bar{\Om}=[-\pi,\pi]^2$.  For the rest of the paper, we drop the bar notation as it will be clear from the context when $\kap$ is non-dimensional or not.

\subsection{Function spaces: $L^p_{per}$, $V_\s$, $H_{{per}}^\s$, $\dot{H}_{{per}}^\s$, $C^\infty_{per}$}\label{hs:def}
Let $1\leq p\leq\infty$, $\s\in \R$ and $\T^2=\R^2/(2\pi\Z)=[-\pi,\pi]^2$.  Let $\mathcal{M}$ denote the set of real-valued Lebesgue measureable functions over $\T^2$.  Since we will be working with periodic functions, we define
	\begin{align}
		\mathcal{M}_{per}:=\{\phi\in\mathcal{M}:\phi(x,y)=\phi(x+2\pi,y)=\phi(x,y+2\pi)=\phi(x+2\pi,y+2\pi)\ \text{a.e.}\}.
	\end{align}
Let $C^\infty(\T^2)$ denote the class of functions which are infinitely differentiable over $\T^2$.  We define $C^\infty_{per}(\T^2)$ by
	\begin{align}\notag
		C^\infty_{per}(\T^2):=C^\infty(\T^2)\cap\mathcal{M}_{per}.
	\end{align} 
For $1\leq p\leq\infty$, we define the periodic Lebesgue spaces by
	\begin{align}\notag
		L^p_{per}(\T^2):=\{\phi\in\mathcal{M}_{per}: \Sob{\phi}{L^p}<\infty\},
	\end{align}
where
	\begin{align}\notag
		\Sob{\phi}{L^p}:=\left(\int_{\T^2} |\phi(x)|^p\ dx\right)^{1/p},\quad 1\leq p<\infty,\quad\text{and}\quad \Sob{\phi}{L^\infty}:=\ \stackrel[x\in\T^2]{}{\esssup}|\phi(x)|.
	\end{align}
Let us also define
	\begin{align}\label{Z:space}
		\mathcal{Z}:=\{\phi\in L^1_{per}:\int_{\T^2} \phi(x)\ dx=0\}.
	\end{align}
%Then let
%	\begin{align}\notag
%		\dot{L}_{per}^p(\T^2):=L^p_{per}(\T^2)\cap\mathcal{Z}.
%	\end{align}

Let $\hat{\phi}(\mathbf{k})$ denote the Fourier coefficient of $\phi$ at wave-number $\mathbf{k}\in\Z^2$.  For any real number $\s\in\R$, we define the homogeneous Sobolev space, $\Hdper(\T^2)$, by
	\begin{align}\label{Hdper}
		\Hdper(\T^2):=\{\phi\in L^2_{per}(\T^2):\quad \Sob{\phi}{\dot{H}^\s}<\infty\},
	\end{align}
where
	\begin{align}\label{hdper:norm}
		\sum_{\mathbf{k}\in\Z^2\smod\{\mathbf{0}\}}\abs{\mathbf{k}}^{2\s}|\hat{\phi}(\mathbf{k})|^2.
	\end{align}
We define the inhomogeneous Sobolev space, $\Hper(\T^2)$, by
	\begin{align}\label{Hper}
		\Hper(\T^2):=\{\phi\in L^2_{per}(\T^2):\Sob{\phi}{H^\s}<\infty\},
	\end{align}
where
	\begin{align}\label{hper:norm}
		\Sob{\phi}{{H}^\s}^2:=\sum_{\mathbf{k}\in \Z^2}(1+\abs{\mathbf{k}}^2)^{\s}|\hat{\phi}(\mathbf{k})|^2.
	\end{align}

Let $\mathcal{V}_0\sub\mathcal{Z}$ denote the set of trigonometric polynomials with mean zero over $\T^2$ and set
	\begin{align}\label{hper}
		V_\s:=\overline{\mathcal{V}_0}^{H^\s},
	\end{align}
where the closure is taken with respect to the norm given by \req{hper:norm}.  
%Observe that
%	\[
%		V_\s\sub \Hper(\T^2)\cap\mathcal{Z}\sub\Hper(\T^2)\sub\Hdper(\T^2).
%	\]
Observe that the mean-zero condition can be equivalently stated as $\hat{\phi}(\mathbf{0})=0$.  Thus, $\Sob{\cdotp}{\dot{H}^\s}$ and $\Sob{\cdotp}{H^\s}$ are equivalent as norms over $V_\s$.  Moreover, by  Plancherel's theorem we have
	\begin{align}\notag
		\Sob{\phi}{\dot{H}^\s}=\Sob{\Lam^\s \phi}{L^2}.
	\end{align}
Finally, for $\s\geq0$, we identify $V_{-\s}$ as the dual space, $(V_\s)'$, of $V_\s$, which can be characterized as the space of all bounded linear functionals, $\psi$, on $V_\s$ such that
	\[
		\Sob{\psi}{\dot{H}^{-\s}}<\infty.
	\]
Therefore, we have the following continuous embeddings
	\[
		V_\s\imb V_{\s'}\imb V_0\imb V_{-\s'}\imb V_{-\s},\quad 0\leq\s'\leq\s.
	\]

\begin{rmk}
Since we will be working over $V_\s$ and $\Sob{\cdotp}{\dot{H}^\s}$, $\Sob{\cdotp}{H^\s}$ determine equivalent norms over $V_\s$, we will often denote $\Sob{\cdotp}{\dot{H}^\s}$ simply by $\Sob{\cdotp}{H^\s}$ for convenience.  We will distinguish between the inhomogeneous and homogeneous Sobolev norm when we are outside of this context (see Appendix).
\end{rmk}

\subsection{Interpolant Observables}\label{sect:inter}
We will consider two types of interpolant observables, which we refer to as Type I and Type II.  We show in the Appendix that the interpolant given by local averages of the function over cubes that partition the domain, $\T^2$, are of Type I, and that modal projection onto finitely many, fixed wave-numbers are of Type II.

Let $h>0$, $1\leq p\leq\infty$, and $J_h:{L}^p(\T^2)\goesto {L}^p(\T^2)$ be a linear operator.  Suppose that $J_h$ satisfies
	\begin{align}
		&\sup_{h>0}\Sob{J_h\phi}{L^p}\leq C\Sob{\phi}{L^p}, \quad 1< p<\infty,\label{Ih:base0}\\
		&\Sob{J_h\phi}{L^p}\leq Ch^{2/p-1}\Sob{\phi}{L^2},\quad p\geq2\label{Ih:base1},\\
		&\Sob{J_h\phi}{\dot{H}^\be}\leq Ch^{-\be}\Sob{\phi}{L^2},\quad\be\geq0\label{Ih:base2},
	\end{align}
where $C>0$ is an absolute constant independent of $\phi$.  Let us remark here that in our main theorems, we will impose that $h\leq\bar{h}$ for some $\bar{h}>0$, where $\bar{h}$ is the size of the required spatial resolution of the collected measurements  (see section \ref{sect:a priori}).  Thus, at least within the context of the theorems, only the operators $J_h$ for $h\leq\bar{h}$ are considered, though numerical studies of the algorithm suggest that synchronization can occur for values of $\bar{h}$ far larger than what is suggested by the analytical bounds given by the theorems (cf. \cite{hoteit, gesho:olson:titi}).

\subsubsection*{Type I}
In addition to \req{Ih:base0}-\req{Ih:base2}, interpolants of Type I will also satisfy
	\begin{align}\label{TI}
		\Sob{\phi-J_h\phi}{L^2}\leq Ch^{\be}\Sob{\phi}{\dot{H}^\be}\quad\text{and}\quad\Sob{\phi-J_h\phi}{\dot{H}^{-\be}}\leq Ch^{\be}\Sob{\phi}{L^2},\quad \be\in(0,1).
	\end{align}

\subsubsection*{Type II}
In addition to \req{Ih:base0}-\req{Ih:base2}, interpolants of Type II will also satisfy
	\begin{align}\label{TII}
		\Sob{\phi-J_h\phi}{\dot{H}^\al}\leq Ch^{\be-\al}\Sob{\phi}{\dot{H}^\be},\quad \be>\al\quad\text{and}\quad \Lam^\be J_h\phi=J_h\Lam^\be\phi.
	\end{align}

Observe that Type II interpolants are also Type I.  We refer to the Appendix, where we provide examples of both Type I and II interpolants.

\subsection{Inequalities for fractional derivatives}

We will make use of the following bound for the fractional Laplacian, which can be found for instance in \cite{const:gh:vic, ctv, ju:qgattract}.

\begin{prop}\label{lb}
Let $p\geq2$, $0\leq\gam\leq2$, and $\phi\in C_{per}^\infty(\T^2)$. Then 
	\begin{align}\notag
		\int_{\T^2}|\phi|^{p-2}(x)\phi(x)\Lam^\gam \phi(x)\ dx\geq \frac{2}p\Sob{\Lam^{\gam/2}(|\phi|^{p/2})}{L^2}^2.
	\end{align}
\end{prop}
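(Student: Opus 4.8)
The plan is to deduce the integral bound from an elementary two-point inequality, exploiting the singular-integral representation of the fractional Laplacian. The endpoint cases are immediate. For $\gam=0$ one has $\Lam^0=\Id$, so the left-hand side equals $\Sob{\ph}{L^p}^p$ while the right-hand side equals $(2/p)\Sob{\ph}{L^p}^p$, and $2/p\leq1$. For $\gam=2$ one integrates by parts to get $\int\abs{\ph}^{p-2}\ph\,\Lam^2\ph\ dx=(p-1)\int\abs{\ph}^{p-2}\abs{\del\ph}^2\ dx$, whereas $(2/p)\Sob{\del(\abs{\ph}^{p/2})}{L^2}^2=(p/2)\int\abs{\ph}^{p-2}\abs{\del\ph}^2\ dx$, and $p-1\geq p/2$ for $p\geq2$. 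So from now on assume $0<\gam<2$.

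In this range $\Lam^\gam$ admits the pointwise representation
\[
\Lam^\gam\ph(x)=c_\gam\,\mathrm{P.V.}\int_{\T^2}(\ph(x)-\ph(x-y))K_\gam(y)\ dy,
\]
where $K_\gam\geq0$ is the periodization of $\abs{y}^{-(2+\gam)}$ and $c_\gam>0$; the positivity of this kernel is the structural fact that drives the whole argument. Inserting this representation, multiplying by $\abs{\ph(x)}^{p-2}\ph(x)$, integrating in $x$, and symmetrizing the double integral under $x\leftrightarrow y$ yields
\[
\int_{\T^2}\abs{\ph}^{p-2}\ph\,\Lam^\gam\ph\ dx=\frac{c_\gam}{2}\iint\big(\abs{\ph(x)}^{p-2}\ph(x)-\abs{\ph(y)}^{p-2}\ph(y)\big)(\ph(x)-\ph(y))K_\gam(x-y)\ dx\,dy.
\]
Applying the same representation to $g:=\abs{\ph}^{p/2}$ (which lies in $V_{\gam/2}$, since $\ph$ is smooth and $p/2\geq1$, so that $\Sob{\Lam^{\gam/2}g}{L^2}^2=\int g\,\Lam^\gam g\ dx$ by Plancherel) and symmetrizing gives
\[
\frac{2}{p}\Sob{\Lam^{\gam/2}(\abs{\ph}^{p/2})}{L^2}^2=\frac{2}{p}\cdot\frac{c_\gam}{2}\iint\big(\abs{\ph(x)}^{p/2}-\abs{\ph(y)}^{p/2}\big)^2K_\gam(x-y)\ dx\,dy.
\]

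Since $K_\gam\geq0$, the proposition follows once one establishes the pointwise inequality, valid for all $a,b\in\R$ and $p\geq2$,
\[
\big(\abs{a}^{p-2}a-\abs{b}^{p-2}b\big)(a-b)\geq\frac{2}{p}\big(\abs{a}^{p/2}-\abs{b}^{p/2}\big)^2.
\]
When $a$ and $b$ have opposite signs the left-hand side dominates $\abs{a}^p+\abs{b}^p$ while the right-hand side is at most $(2/p)(\abs{a}^p+\abs{b}^p)$, so the inequality is crude; the substantive case is $a,b>0$, where I would write $a^{p-1}-b^{p-1}=(p-1)\int_b^a s^{p-2}\,ds$ and $a^{p/2}-b^{p/2}=\tfrac{p}{2}\int_b^a s^{p/2-1}\,ds$, and apply Cauchy--Schwarz to $\int_b^a s^{(p-2)/2}\cdot1\,ds$ to obtain $(a^{p/2}-b^{p/2})^2\leq\tfrac{p^2}{4(p-1)}(a^{p-1}-b^{p-1})(a-b)$. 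Since $\tfrac{2}{p}\cdot\tfrac{p^2}{4(p-1)}=\tfrac{p}{2(p-1)}\leq1$ precisely for $p\geq2$, this is exactly the claim. Integrating the pointwise bound against the nonnegative measure $K_\gam(x-y)\,dx\,dy$ then completes the proof.

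The main obstacle is not the algebra but the rigorous justification of the representation and the symmetrization: one must control the principal value (integrating first over $\abs{y}>\veps$ and passing to the limit, using that the antisymmetric linear part $\del\ph(x)\cdot y$ cancels against the even kernel, while the remaining $O(\abs{y}^2)$ term is integrable since $\gam<2$), and confirm that $\abs{\ph}^{p/2}$ is regular enough for the Plancherel identity. Both are standard for $\ph\in C^\infty_{per}(\T^2)$, but they are where care is genuinely required.
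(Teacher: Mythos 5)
Your proof is correct, and it is essentially the standard argument behind this result: the paper itself offers no proof of Proposition~\ref{lb}, citing \cite{const:gh:vic, ctv, ju:qgattract} instead, and those references prove it exactly by the route you take --- the positive periodized-kernel representation of $\Lam^\gam$, symmetrization of the double integral, and the two-point convexity (Stroock--Varopoulos/C\'ordoba--C\'ordoba) inequality $\bigl(\abs{a}^{p-2}a-\abs{b}^{p-2}b\bigr)(a-b)\geq\tfrac{4(p-1)}{p^2}\bigl(\abs{a}^{p/2}-\abs{b}^{p/2}\bigr)^2\geq\tfrac{2}{p}\bigl(\abs{a}^{p/2}-\abs{b}^{p/2}\bigr)^2$, which your Cauchy--Schwarz computation establishes correctly, including the sharp role of $p\geq2$ and the endpoint checks at $\gam=0,2$. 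One cosmetic slip: $g=\abs{\ph}^{p/2}$ is nonnegative and hence not mean-zero, so it lies in $H^{\gam/2}_{per}(\T^2)$ rather than in $V_{\gam/2}$ as you assert; this is harmless, since the zero Fourier mode contributes nothing to either side of the Plancherel identity $\Sob{\Lam^{\gam/2}g}{L^2}^2=\int g\,\Lam^\gam g\ dx$ that you invoke.
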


We will  also make use of the following calculus inequality for fractional derivatives (cf. \cite{kato:ponce, kenig:ponce:vega} and references therein):

\begin{prop}\label{prod:rule}
Let $\phi, \psi\in C_{per}^\infty(\T^2)$, $\be>0$, and $p\in(1,\infty)$.  Let $1/p=1/p_1+1/p_2=1/p_3+1/p_4$, and $p_2,p_3\in(1,\infty)$.  There exists an absolute constant $C>0$, depending only on $\s, p, p_i$, such that
	\begin{align}\notag
		\Sob{\Lam^\be(\phi\psi)}{L^p}\leq C\Sob{\psi}{L^{p_1}}\Sob{\Lam^\be \phi}{L^{p_2}}+C\Sob{\Lam^\be \psi}{L^{p_3}}\Sob{\phi}{L^{p_4}}.
	\end{align}
\end{prop}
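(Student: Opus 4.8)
The plan is to establish this fractional Leibniz rule through the Littlewood--Paley paraproduct (Bony) decomposition, arranging that $\Lam^\be$ act on each frequency-localized block as multiplication by a power of the block frequency, and then reassembling the pieces via the square-function characterization of the $L^p$ norm. Since $\phi,\psi\in C^\infty_{per}(\T^2)$, all the frequency sums below are benign. Writing $S_k:=\sum_{j\le k}\lpj$ for the low-frequency cutoff, I would decompose
\begin{align}\notag
\phi\psi=\underbrace{\sum_k S_{k-2}\psi\,\lpk\phi}_{\Pi_1}+\underbrace{\sum_k S_{k-2}\phi\,\lpk\psi}_{\Pi_2}+\underbrace{\sum_{\abs{j-k}\le1}\lpj\phi\,\lpk\psi}_{\Pi_3}.
\end{align}
The two paraproducts $\Pi_1$ and $\Pi_2$ are of the same type, with the roles of $(\phi,p_1,p_2)$ and $(\psi,p_4,p_3)$ interchanged, so it suffices to treat $\Pi_1$ and the resonant term $\Pi_3$.

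For $\Pi_1$, each summand $S_{k-2}\psi\,\lpk\phi$ is spectrally supported in an annulus $\abs\xi\sim2^k$, so $\Lam^\be$ restricted to this block equals $2^{\be k}$ times a Fourier multiplier bounded on $L^p$ uniformly in $k$ (Mikhlin--H\"ormander). Exploiting the bounded overlap of the annular supports and the square-function theorem, I would bound
\begin{align}\notag
\Sob{\Lam^\be\Pi_1}{L^p}\ls\left\lVert\,\sup_k\abs{S_{k-2}\psi}\left(\sum_k 2^{2\be k}\abs{\lpk\phi}^2\right)^{1/2}\right\rVert_{L^p}.
\end{align}
Since $\sup_k\abs{S_{k-2}\psi}\ls M\psi$ pointwise (Hardy--Littlewood maximal function) and $1/p_1=1/p-1/p_2<1$ forces $p_1>1$, H\"older's inequality with $1/p=1/p_1+1/p_2$, the maximal inequality, and the square-function characterization of $\Sob{\Lam^\be\phi}{L^{p_2}}$ (valid as $p_2\in(1,\infty)$) would give
\begin{align}\notag
\Sob{\Lam^\be\Pi_1}{L^p}\ls\Sob{M\psi}{L^{p_1}}\left\lVert\left(\sum_k 2^{2\be k}\abs{\lpk\phi}^2\right)^{1/2}\right\rVert_{L^{p_2}}\ls\Sob{\psi}{L^{p_1}}\Sob{\Lam^\be\phi}{L^{p_2}}.
\end{align}
Treating $\Pi_2$ in the same way (using $p_4>1$, $p_3\in(1,\infty)$) would produce $\Sob{\phi}{L^{p_4}}\Sob{\Lam^\be\psi}{L^{p_3}}$.

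The hard part will be the resonant term $\Pi_3$: its summands $\lpj\phi\,\lpk\psi$ with $\abs{j-k}\le1$ are supported only in a ball $\abs\xi\ls2^k$ rather than an annulus, so $\Lam^\be$ cannot be localized to a single scale directly. To remedy this I would reinsert an outer projection: setting $\widetilde{\triangle}_k:=\sum_{\abs{j-k}\le1}\lpj$, one has $\lpl(\lpk\psi\,\widetilde{\triangle}_k\phi)=0$ unless $\ell\le k+c$ for a fixed $c$, so that
\begin{align}\notag
\Lam^\be\Pi_3=\sum_\ell\lpl\Lam^\be\sum_{k\ge\ell-c}\lpk\psi\,\widetilde{\triangle}_k\phi,
\end{align}
on whose $\ell$-block $\Lam^\be$ contributes $2^{\be\ell}=2^{\be(\ell-k)}2^{\be k}$. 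This is precisely where the hypothesis $\be>0$ is essential: the factor $2^{\be(\ell-k)}$ is summable over $\ell\le k+c$, which lets me transfer the weight $2^{\be k}$ onto $\lpk\psi$ and bound $\sup_k\abs{\widetilde{\triangle}_k\phi}\ls M\phi$. The square-function theorem in the output index $\ell$, together with H\"older's inequality ($1/p=1/p_3+1/p_4$) and the maximal inequality, would then close the estimate with the bound $\Sob{\Lam^\be\psi}{L^{p_3}}\Sob{\phi}{L^{p_4}}$ (or, shifting the derivative onto $\phi$, the first term). Summing the three contributions gives the claim, with $C$ recording only the fixed multiplier and maximal-operator norms, hence depending solely on $\be,p$ and the exponents $p_i$.
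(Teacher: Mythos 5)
Your proposal is correct, but it necessarily differs from the paper, because the paper does not prove this proposition at all: it is quoted from the literature, with a citation to Kato--Ponce and Kenig--Ponce--Vega, whose original arguments run through commutator estimates and Coifman--Meyer multiplier theory rather than an explicit frequency decomposition. What you have written is the standard modern paraproduct proof, and its skeleton is sound: the Bony splitting into $\Pi_1,\Pi_2,\Pi_3$, the observation that $\Lam^\be$ acts on the annularly supported paraproduct blocks as $2^{\be k}$ times a uniformly $L^p$-bounded multiplier, the pointwise domination $\sup_k|S_{k-2}\psi|\ls M\psi$ followed by H\"older and the square-function characterization, and---crucially---the use of $\be>0$ to make the off-diagonal factor $2^{\be(\ell-k)}$, $\ell\le k+c$, summable in the resonant term, which is exactly the step that fails for $\be\le0$ and explains why no Leibniz rule of this form holds there. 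Your deduction that $p_1,p_4>1$ (indeed $p_1\in(p,\infty]$, $p_4\in(p,\infty]$) from the H\"older relations is also right, and the maximal function is bounded there including the endpoint $\infty$. Two points deserve to be made explicit to close the argument. First, in the resonant estimate, after writing $\lv\lpl\Lam^\be h\rv\ls 2^{\be\ell}Mh$ for each $h=\lpk\psi\,\widetilde{\triangle}_k\phi$, the passage from $\bigl(\sum_k\lv M(2^{\be k}\lpk\psi\,\widetilde{\triangle}_k\phi)\rv^2\bigr)^{1/2}$ back to the square function of $2^{\be k}\lpk\psi$ requires the Fefferman--Stein vector-valued maximal inequality (valid for $p_3\in(1,\infty)$, which is hypothesized), not merely the scalar maximal theorem; your phrase ``the maximal inequality'' papers over this. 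Second, since the proposition is set on $\T^2$ with the homogeneous operator $\Lam^\be=(-\De)^{\be/2}$, which annihilates constants, the equivalence $\Sob{\Lam^\be\phi}{L^{p_2}}\sim\lV(\sum_k2^{2\be k}\lv\lpk\phi\rv^2)^{1/2}\rV_{L^{p_2}}$ only holds after splitting off the zero mode; this is harmless---write $\phi=\hat{\phi}(\mathbf{0})+\til{\phi}$, absorb the constant contribution $\hat{\phi}(\mathbf{0})\Lam^\be\psi$ into the second term on the right-hand side via $\lv\hat{\phi}(\mathbf{0})\rv\ls\Sob{\phi}{L^{p_4}}$, and note that all remaining frequencies satisfy $\abs{\mathbf{k}}\geq1$ so the homogeneous and inhomogeneous symbols are comparable on every block---but it should be said, since your appeal to the ``square-function characterization'' is otherwise being invoked for a homogeneous norm on a compact domain. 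With these two repairs your argument is a complete, self-contained proof, which is arguably more than the paper itself provides.
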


Finally, we will frequently apply the following interpolation inequality, which is a special case of the Gagliardo-Nirenberg interpolation inequality and can be proven by using the Plancherel theorem combined with H\"older's inequality:

\begin{prop}\label{interpol}
Let $\phi\in {H}^{\be}_{per}(\T^2)$ and $0\leq \al< \be$.
There exists an absolute constant $C>0$, depending only on $\al, \be$ such that
	\begin{align}\label{gn:ineq}
		\Sob{\Lam^{\al} \phi}{L^2}\leq C\Sob{\Lam^{\be} \phi}{L^2}^{\frac{\al}{\be}}\Sob{\phi}{L^2}^{1-\frac{\al}{\be}}.
	\end{align}
\end{prop}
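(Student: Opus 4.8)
The plan is to reduce the estimate to the Fourier side via Plancherel's theorem and then apply a single application of Hölder's inequality to a sum over wave-numbers. The case $\al=0$ is trivial, since then both sides collapse to $\Sob{\phi}{L^2}$, so I would assume $0<\al<\be$ throughout. By Plancherel, writing $\hat{\phi}(\mathbf{k})$ for the Fourier coefficients,
\begin{align}\notag
	\Sob{\Lam^\al\phi}{L^2}^2=\sum_{\mathbf{k}\in\Z^2\smod\{\mathbf{0}\}}\abs{\mathbf{k}}^{2\al}|\hat{\phi}(\mathbf{k})|^2,
\end{align}
where the zero mode is omitted for free because $\abs{\mathbf{0}}^{2\al}=0$ when $\al>0$.

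The heart of the argument is the pointwise-in-$\mathbf{k}$ factorization
\begin{align}\notag
	\abs{\mathbf{k}}^{2\al}|\hat{\phi}(\mathbf{k})|^2=\lft\abs{\mathbf{k}}^{2\be}|\hat{\phi}(\mathbf{k})|^2\rht^{\al/\be}\lft|\hat{\phi}(\mathbf{k})|^2\rht^{1-\al/\be},
\end{align}
which follows from $\abs{\mathbf{k}}^{2\al}=(\abs{\mathbf{k}}^{2\be})^{\al/\be}$ together with the splitting $|\hat{\phi}(\mathbf{k})|^2=(|\hat{\phi}(\mathbf{k})|^2)^{\al/\be}(|\hat{\phi}(\mathbf{k})|^2)^{1-\al/\be}$. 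I would then sum over $\mathbf{k}$ and apply Hölder's inequality with the conjugate exponents $p=\be/\al$ and $q=\be/(\be-\al)$, noting $1/p+1/q=\al/\be+(\be-\al)/\be=1$. The exponents are chosen precisely so that raising the two factors to the powers $p$ and $q$ collapses them back to $\abs{\mathbf{k}}^{2\be}|\hat{\phi}(\mathbf{k})|^2$ and $|\hat{\phi}(\mathbf{k})|^2$, respectively, yielding
\begin{align}\notag
	\Sob{\Lam^\al\phi}{L^2}^2\leq\lft\sum_{\mathbf{k}\neq\mathbf{0}}\abs{\mathbf{k}}^{2\be}|\hat{\phi}(\mathbf{k})|^2\rht^{\al/\be}\lft\sum_{\mathbf{k}\neq\mathbf{0}}|\hat{\phi}(\mathbf{k})|^2\rht^{1-\al/\be}.
\end{align}
Recognizing the first factor as $\Sob{\Lam^\be\phi}{L^2}^{2\al/\be}$ by Plancherel and bounding the second by $\Sob{\phi}{L^2}^{2(1-\al/\be)}$, taking a square root finishes the estimate (in fact with $C=1$).

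There is no serious obstacle here; the only point requiring a moment of care is the bookkeeping of the $\mathbf{k}=\mathbf{0}$ mode, which is excluded by the homogeneous norms on the left and in the first right-hand factor, but which I reinstate harmlessly in the second factor via $\sum_{\mathbf{k}\neq\mathbf{0}}|\hat{\phi}(\mathbf{k})|^2\leq\Sob{\phi}{L^2}^2$ (an equality precisely when $\phi$ has zero mean, as in the intended setting $\phi\in V_\be$). Since this last step only upper-bounds, it costs nothing and explains why the stated inequality holds whether or not $\phi$ has vanishing average.
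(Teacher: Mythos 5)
Your proof is correct and follows exactly the route the paper indicates for this proposition, namely Plancherel's theorem combined with a single application of H\"older's inequality on the Fourier side (the paper states this method without writing out the details). Your handling of the $\mathbf{k}=\mathbf{0}$ mode is careful and accurate, and your argument even shows the inequality holds with $C=1$.
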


\subsection{$L^p$ bounds and Global Attractor of SQG equation}\label{sect:ball}
Let us recall the following estimates for the reference solution $\tht$ (cf. \cite{ctv, ju:qgattract, resnick}).

\begin{prop}\label{prop:sqg:ball}
Let $\kap>0$, $\gam\in[0,2]$ and $\tht_0, f\in {L}^p_{per}(\T^2)\cap\mathcal{Z}$. 
Suppose that $\tht\in {L}_{per}^p(\T^2)$ is a smooth solution of \req{sqg} such that $\tht(\cdotp, 0)=\tht_0(\cdotp)$.  There exists an absolute constant $C>0$ such that for any $p\geq2$, we have
	\begin{align}\label{fp}
		\Sob{\tht(t)}{L^p}\leq \left(\Sob{\tht_0}{L^p}-\frac{1}C{\FLp}\right)e^{-C{{{\kap}}}t}+\frac{1}{C}{\FLp},\quad {\FLp}:=\frac{1}{{\kap}}\Sob{f}{L^p}.
	\end{align}
Moreover, for $p=2$ and $f\in V_{-\gam/2}$, we have
	\begin{align}\label{fgam}
		\Sob{\tht(t)}{L^2}^2\leq \left(\Sob{\tht_0}{L^2}^2-{\FHg^2}\right)e^{-{{{\kap}}} t}+{\FHg^2}, \quad \FHg:=\frac{1}{{\kap}}\Sob{f}{H^{-\gam/2}}.
	\end{align}
\end{prop}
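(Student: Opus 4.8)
The plan is to establish both bounds as \emph{a priori} estimates, exploiting two structural features of \req{sqg}: the velocity $u=\Ri^\perp\tht$ is divergence free, so the transport term is conservative, and the nonlocal dissipation $\kap\Lam^\gam$ is coercive on the mean-zero space $\cZ$. Throughout I use that $\tht$ is smooth and mean zero, which makes every integration by parts below legitimate.

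\emph{The estimate \req{fgam}.} Taking the $L^2$ inner product of \req{sqg} with $\tht$, the transport term drops since $\del\cdotp u=0$, and I am left with $\tfrac12\tfrac{d}{dt}\Sob{\tht}{L^2}^2+\kap\Sob{\Lam^{\gam/2}\tht}{L^2}^2=\lb f,\tht\rb$. Because $f$ is only assumed to lie in $V_{-\gam/2}$, I would estimate the forcing by duality, $\lb f,\tht\rb\le\Sob{f}{H^{-\gam/2}}\Sob{\Lam^{\gam/2}\tht}{L^2}$, and absorb the gradient factor into the dissipation with Young's inequality, producing $\tfrac{d}{dt}\Sob{\tht}{L^2}^2+\kap\Sob{\Lam^{\gam/2}\tht}{L^2}^2\le\tfrac{1}{\kap}\Sob{f}{H^{-\gam/2}}^2$. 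Since every nonzero integer wave-number satisfies $\abs{\mathbf k}^\gam\ge1$, the fractional Poincar\'e inequality $\Sob{\Lam^{\gam/2}\tht}{L^2}^2\ge\Sob{\tht}{L^2}^2$ holds on $\cZ$, and I obtain $\tfrac{d}{dt}\Sob{\tht}{L^2}^2+\kap\Sob{\tht}{L^2}^2\le\tfrac{1}{\kap}\Sob{f}{H^{-\gam/2}}^2$. Gr\"onwall's inequality then yields \req{fgam}, with decay rate $\kap$ and steady state exactly $\FHg^2$.

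\emph{The estimate \req{fp}.} For $p\ge2$ I would multiply \req{sqg} by $\abs{\tht}^{p-2}\tht$ and integrate. The transport term again vanishes, since $\abs{\tht}^{p-2}\tht\,\del\tht=\tfrac{1}{p}\del(\abs{\tht}^p)$ integrates against the divergence-free $u$ to zero, leaving $\tfrac{1}{p}\tfrac{d}{dt}\Sob{\tht}{L^p}^p+\kap\int_{\T^2}\abs{\tht}^{p-2}\tht\,\Lam^\gam\tht\,dx=\int_{\T^2}\abs{\tht}^{p-2}\tht\,f\,dx$. The forcing is controlled by H\"older's inequality, $\left|\int_{\T^2}\abs{\tht}^{p-2}\tht\,f\,dx\right|\le\Sob{\tht}{L^p}^{p-1}\Sob{f}{L^p}$. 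Granting a coercive lower bound $\int_{\T^2}\abs{\tht}^{p-2}\tht\,\Lam^\gam\tht\,dx\ge c\,\Sob{\tht}{L^p}^p$ on $\cZ$, dividing through by $p\Sob{\tht}{L^p}^{p-1}$ reduces matters to the scalar differential inequality $\tfrac{d}{dt}\Sob{\tht}{L^p}+c\kap\Sob{\tht}{L^p}\le\Sob{f}{L^p}$, and Gr\"onwall's inequality gives \req{fp} with the decay exponent $c\kap$ and the threshold $\tfrac{1}{C}\FLp$ both governed by $C=c$.

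\emph{The main obstacle.} The crux is the coercive bound $\int_{\T^2}\abs{\tht}^{p-2}\tht\,\Lam^\gam\tht\,dx\gtrsim\Sob{\tht}{L^p}^p$. Proposition \ref{lb} gives the positivity $\int\abs{\tht}^{p-2}\tht\,\Lam^\gam\tht\ge\tfrac{2}{p}\Sob{\Lam^{\gam/2}(\abs{\tht}^{p/2})}{L^2}^2$, but one cannot finish by a naive Poincar\'e inequality, because $\abs{\tht}^{p/2}$ is nonnegative and need not have zero mean even though $\tht$ does; indeed $\abs{\tht}^{p/2}$ can be essentially constant while $\tht$ is not, so the homogeneous seminorm on the right may fail to control $\Sob{\tht}{L^p}^p$. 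I would instead secure the bound through the Aronszajn--Slobodeckij form of $\Lam^\gam$, writing $\int\abs{\tht}^{p-2}\tht\,\Lam^\gam\tht$ as a double integral of $(\abs{\tht(x)}^{p-2}\tht(x)-\abs{\tht(y)}^{p-2}\tht(y))(\tht(x)-\tht(y))$ against the kernel $\abs{x-y}^{-(2+\gam)}$, and invoking the elementary convexity inequality $(\abs a^{p-2}a-\abs b^{p-2}b)(a-b)\ge c_p\abs{a-b}^p$ valid for $p\ge2$. This bounds the dissipation below by the Gagliardo $W^{\gam/p,p}$-seminorm of $\tht$, to which the fractional Poincar\'e inequality on the mean-zero space applies, yielding the required coercivity with constant $c$ depending only on $p$ and $\gam$. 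Tracking how $c$ combines with the factors above fixes $C$ in \req{fp}.
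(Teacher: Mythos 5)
A point of context first: the paper never proves Proposition \ref{prop:sqg:ball} itself; it is recalled from \cite{ctv, ju:qgattract, resnick}, so your proposal has to be measured against the standard arguments there and against the analogous estimate the paper \emph{does} prove for $\eta$ (Proposition \ref{prop:lp}). Your $L^2$ argument is exactly the standard one and is correct: the transport term vanishes, the forcing is handled by duality and Young's inequality, and the Poincar\'e inequality $\Sob{\Lam^{\gam/2}\tht}{L^2}\geq\Sob{\tht}{L^2}$ on $\mathcal{Z}$ gives \req{fgam} with rate exactly $\kap$. You also correctly identify the real obstacle in the $L^p$ bound: Proposition \ref{lb} leaves you with $\Sob{\Lam^{\gam/2}(|\tht|^{p/2})}{L^2}^2$, and $|\tht|^{p/2}$ is not mean-zero, so a bare Poincar\'e inequality cannot close the argument. (For comparison, the paper's Proposition \ref{prop:lp} resolves this for $\eta$ by a third mechanism: subtracting the average via Lemma \ref{lem:avg} and Corollary \ref{coro:poin}, then absorbing the $L^{p/2}$ correction by interpolation against $M_{L^1}$.)

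The genuine gap is quantitative: your route cannot produce the \emph{absolute} constant $C$, uniform in $p\geq2$, that the proposition asserts (and which matters, e.g., for the formal $p=\infty$ limit in Remark \ref{rmk:opt}). The convexity inequality $(|a|^{p-2}a-|b|^{p-2}b)(a-b)\geq c_p|a-b|^p$ is true, but taking $b=-a$ shows $c_p\leq 2^{2-p}$, so $c_p$ degenerates exponentially in $p$, and the fractional Poincar\'e constant at smoothness $\gam/p$ cannot recoup an exponential loss; your final decay rate $c(p,\gam)\kap$ therefore collapses as $p\to\infty$, which you implicitly concede by letting $c$ depend on $p$. (Your singular-integral representation also fails at the endpoint $\gam=2$, which the statement includes.) The repair starts from exactly your double integral and is simpler: on $\T^2$, $\Lam^\gam$ acts by a principal-value integral against the periodized kernel $K_\gam$, which satisfies $K_\gam\geq \inf_{\T^2}K_\gam=:c_\gam>0$; since the factor $(|\tht(x)|^{p-2}\tht(x)-|\tht(y)|^{p-2}\tht(y))(\tht(x)-\tht(y))$ is pointwise nonnegative, you may replace $K_\gam$ by $c_\gam$ and then \emph{expand} the symmetric double integral instead of estimating it from below pointwise:
\begin{align}\notag
\iint_{\T^2\times\T^2}\bigl(|\tht(x)|^{p-2}\tht(x)-|\tht(y)|^{p-2}\tht(y)\bigr)\bigl(\tht(x)-\tht(y)\bigr)\,dx\,dy
=2\,|\T^2|\,\Sob{\tht}{L^p}^p-2\left(\int_{\T^2}|\tht|^{p-2}\tht\,dx\right)\left(\int_{\T^2}\tht\,dy\right),
\end{align}
and the cross term vanishes precisely because $\tht\in\mathcal{Z}$. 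This yields $\int|\tht|^{p-2}\tht\,\Lam^\gam\tht\,dx\geq c_\gam|\T^2|\Sob{\tht}{L^p}^p$ with a constant independent of $p$ — no convexity inequality and no fractional Poincar\'e inequality needed — after which your Gr\"onwall step delivers \req{fp} as stated. For the paper's actual application (a single fixed $p$ from $(H3)$, together with $p=2$), your weaker $p$-dependent version would in fact suffice, but it does not prove the proposition as written.
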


It was shown in \cite{ju:qgattract} that in the subcritical range, $1<\gam\leq2$, equation \req{sqg} has an absorbing ball in ${H}_{per}^\s(\T^2)$, for $\s>2-\gam$, and corresponding global attractor, $\A\sub V_\s$.  We recall that an $H^r_{per}(\T^2)$-absorbing set for a dissipative equation is a bounded set $\mathcal{B}\sub H^r_{per}(\T^2)$ characterized by the property that for any $\tht_0\in H^r_{per}(\T^2)$, there exists $t_0=t_0(\Sob{\tht_0}{H^r_{per}})>0$ such that $S(t)\tht_0\in\mathcal{B}$ for all $t\geq t_0$, where $\{S(t)\}_{t\geq0}$ denotes the semigroup of the corresponding dissipative equation.

\begin{prop}[Global attractor]\label{prop:ga}
Suppose that $1<\gam\leq2$ and  $\s>2-\gam$.  Let $f\in V_{\s-\gam/2}\cap {L}_{per}^p(\T^2)$, where $1-\s<2/p<\gam-1$.  Then \req{sqg} has an absorbing ball $\mathcal{B}_{H^\s}$ given by
	\begin{align}\label{sqg:hs:ball}
		\mathcal{B}_{H^\s}:=\{\tht\in V_\s: \Sob{\tht}{H^\s}\leq \THs\},
	\end{align}
for some $\THs<\infty$.  Moreover, the solution operator $S(t)\tht_0=\tht(t)$, $t>0$ of \req{sqg} defines a semigroup in the space $V_\s$ and possesses a global attractor $\A\sub V_\s$, i.e., $\A$ is a compact, connected subset of $V_\s$ satisfying the following properties:
	\begin{enumerate}
		\item $\A$ is the maximal bounded invariant set;%and minimal invariant set with respect to set inclusion,
		\item  $\A$ attracts all bounded subsets in $V_\s$ in the topology of $\dot{H}_{{per}}^\s$.
	\end{enumerate}
\end{prop}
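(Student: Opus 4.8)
\emph{Proof proposal.} The plan is to carry out the standard program for producing a global attractor of a dissipative evolution equation (cf. Temam): first establish a bounded absorbing set in $V_\s$, then upgrade it to an absorbing set in a strictly more regular space whose embedding into $V_\s$ is compact, and finally identify $\A$ as the $\w$-limit set of this absorbing set. The subcritical well-posedness theory for $\gam\in(1,2)$ (recalled from the cited literature) guarantees that $S(t)\tht_0=\tht(t)$ is a well-defined, continuous semigroup on $V_\s$; consequently the only genuinely analytical content lies in deriving the $H^\s$ (and slightly higher) a priori bounds, together with the asymptotic compactness they furnish.

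For the absorbing ball \req{sqg:hs:ball} I would apply $\Lam^\s$ to \req{sqg}, pair with $\Lam^\s\tht$ in $L^2$, and use that $v=\Ri^\perp\tht$ is divergence free to write the nonlinear contribution as $\lb\Lam^\s\del\cdotp(v\tht),\Lam^\s\tht\rb$. Transferring $\gam/2$ derivatives onto the second factor bounds it by
	\[
		\Sob{\Lam^{\s-\gam/2+1}(v\tht)}{L^2}\,\Sob{\Lam^{\s+\gam/2}\tht}{L^2}.
	\]
The first factor is then controlled by the fractional product rule (Proposition \ref{prod:rule}) together with the $L^p$-boundedness of the Riesz transform, and the Gagliardo--Nirenberg inequality (Proposition \ref{interpol}) interpolates the resulting norms between the dissipation $\Sob{\Lam^{\s+\gam/2}\tht}{L^2}$ and the $L^p$-norms already controlled by Proposition \ref{prop:sqg:ball}. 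The key structural fact is that the top order $\s-\gam/2+1$ is strictly below $\s+\gam/2$ exactly because $\gam>1$, while the hypotheses $\s>2-\gam$ and $1-\s<2/p<\gam-1$ make the interpolation exponents admissible and force the power of $\Sob{\Lam^{\s+\gam/2}\tht}{L^2}$ in the nonlinear bound to stay strictly below $2$, so that this term is absorbed into $\tfrac{\kap}{2}\Sob{\Lam^{\s+\gam/2}\tht}{L^2}^2$ by Young's inequality. Using the Poincar\'e-type inequality $\Sob{\Lam^\s\tht}{L^2}\leq\Sob{\Lam^{\s+\gam/2}\tht}{L^2}$ on mean-zero fields, I then reach a differential inequality of the form $\ddt\Sob{\Lam^\s\tht}{L^2}^2+c\,\Sob{\Lam^\s\tht}{L^2}^2\leq C$, whose Gronwall integration yields a radius $\THs$ independent of $\tht_0$, hence the absorbing ball \req{sqg:hs:ball}.

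To obtain compactness I would repeat the same energy computation at the level of $\Hdper$ with $\s$ replaced by $\s+\veps$ for a sufficiently small $\veps>0$, now using the uniform $V_\s$ bound already secured to absorb the lower-order nonlinear contributions; the structural conditions persist, so one obtains an absorbing ball $\mathcal{B}_{H^{\s+\veps}}$ in $V_{\s+\veps}$ as well. Since $V_{\s+\veps}\imb V_\s$ compactly by the Rellich theorem, this set is precompact in the topology of $\Hdper$, which supplies the asymptotic compactness of $S(t)$. With a continuous semigroup, a bounded absorbing set, and asymptotic compactness established, the abstract theory then produces the global attractor as $\A=\w(\mathcal{B}_{H^{\s+\veps}})=\I_{s\geq0}\ol{\U_{t\geq s}S(t)\mathcal{B}_{H^{\s+\veps}}}$, and the standard characterization of this set gives properties (1) and (2).

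I expect the main obstacle to be closing the $H^\s$ estimate for the nonlinear term: because the dissipation $\Lam^\gam$ with $\gam<2$ is weaker than the full Laplacian, the trade-off between the $\gam/2$ derivatives gained from the dissipation and the derivative lost to $\del\cdotp(v\tht)$ is tight, and it is precisely this balance --- quantified by $\gam>1$, $\s>2-\gam$, and the two-sided restriction $1-\s<2/p<\gam-1$ --- that must be verified to guarantee the nonlinearity is subordinate to the dissipation.
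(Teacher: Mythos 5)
Your proposal is correct in outline, but the comparison point here is unusual: the paper offers no proof of Proposition \ref{prop:ga} at all. The proposition is recalled verbatim from Ju \cite{ju:qgattract}, and the sentence preceding the statement (``It was shown in \cite{ju:qgattract} that\dots'') is the paper's entire justification. So your sketch supplies an argument where the paper supplies a citation, and the argument you sketch is the standard one --- indeed, it is essentially the same scheme the paper itself executes later for the \emph{nudged} equation. Compare your nonlinear estimate with the treatment of the term $I$ in the proof of Proposition \ref{prop:hs:a priori}: there the paper rewrites $I$ via the Riesz transform as in \req{setup1}, bounds it by $\Sob{\Lam^{1+\s-\gam/2}(v\eta)}{L^2}\Sob{\Lam^{\s+\gam/2}\eta}{L^2}$, applies Proposition \ref{prod:rule} together with Calder\'on--Zygmund boundedness and the embedding $H^{2/p}\imb L^q$, and closes with the interpolation \req{hs:interpolate} and Young's inequality, the exponent on the dissipative norm staying strictly below $2$ precisely because $2/p<\gam-1$. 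That is exactly the chain and the structural role of $\gam>1$, $\s>2-\gam$, $1-\s<2/p<\gam-1$ that you identify, applied to $\tht$ instead of $\eta$ (for $\tht$ it is simpler, since the $\mu J_h$ terms are absent); your compactness step (absorbing ball in $V_{\s+\veps}$, Rellich, $\A=\omega(\mathcal{B}_{H^{\s+\veps}})$) is the standard completion used in the cited literature.

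Two points would need repair if you wrote this out in full. First, the inequality $\ddt\Sob{\Lam^\s\tht}{L^2}^2+c\,\Sob{\Lam^\s\tht}{L^2}^2\leq C$ is not what your estimate gives directly: the right-hand side retains a term proportional to $\Sob{\Lam^{\gam/2}\tht}{L^2}^2$ (compare \req{hs:noavg1}), which is not pointwise bounded in time. You must either interpolate $\Sob{\Lam^{\gam/2}\tht}{L^2}^2$ between $\Sob{\tht}{L^2}^2$ and $\Sob{\Lam^{\s+\gam/2}\tht}{L^2}^2$ and absorb the latter by Young's inequality (using the $L^2$ absorbing ball of Proposition \ref{prop:sqg:ball}), or invoke the uniform Gronwall lemma with the unit-time-averaged dissipation bound coming from the $L^2$ energy inequality; the paper handles the analogous issue for $\eta$ by feeding Proposition \ref{prop:a priori0} into its Gronwall step. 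Second, Proposition \ref{interpol} interpolates only $L^2$-based Sobolev norms, so your phrase ``interpolates \dots{} between the dissipation and the $L^p$-norms'' should be read as: interpolate among $L^2$-based norms and let the $L^p$ maximum-principle bound enter only through the coefficient, as in the definition of $\Xi_{p,\s}$ in \req{R:sig}. Neither point is a genuine gap; both are routine to fix.
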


Before we move on to the a priori analysis, we will set forth the following convention for constants.
\begin{rmk}
In the estimates that follow below, $c, C$, will denote generic positive absolute constants, which depend only on other non-dimensional scalar quantities, and may change line-to-line in the estimates.   We also use the notation $A\lesssim B$ and $A\sim B$ to denote the relations $A\leq cB$ and $c'B\leq A\leq c''B$, respectively, for some absolute constants $c, c', c''>0$.
\end{rmk}

\section{Main Results and A Priori Estimates}\label{sect:a priori}
We will work in the following setting throughout this section.

\begin{sett}
Assume the following:
\begin{enumerate}[(H1)]
	\item $1<\gam\leq 2$;
	\item $\s>2-\gam$;
	\item $p\in[1,\infty]$ such that $1-\s<2/p<\gam-1$, fixed;
	\item $\eta_0\in V_\s$
	\item $f\in V_{\s-\gam/2}\cap L^{{p}}$, time-independent;
	\item $\tht_0\in\mathcal{B}_{H^\s}\cap L^{{p}}$, where $\mathcal{B}_{H^\s}$ is the $H^\s$-absorbing ball with radius $\Tht_{H^\s}$ from Proposition \ref{prop:ga};
	\item $J_h$ satisfies \req{Ih:base0}, \req{Ih:base1}, \req{Ih:base2} and is either of Type I or Type II.
%	\item If $J_h$ is Type I, then it is assumed that $\gam>4/3$.
\end{enumerate}
\end{sett}
Observe that by Proposition \ref{prop:sqg:ball}, $(H3)$ and $(H5)$ immediately imply that the solution $\tht$ of \req{sqg} corresponding to initial data $\tht_0$ satisfies
	\begin{align}\label{tht:bounds}
		\TLt:=\sup_{t>0}\Sob{\tht(t)}{L^2}<\infty\quad \text{and}\quad\TLp:=\sup_{t>0}\Sob{\tht(t)}{L^p}<\infty.
	\end{align}

We will first show (in section \ref{a priori:step1}) that smooth solutions to \req{fb:sqg}, $\eta(t)$, satisfy
	\begin{align}\label{eta:l2}
		M_{L^2}:=\sup_{t>0}\Sob{\eta(t)}{L^2}<\infty,
	\end{align}
and (in section \ref{l2:to:lp}) that this implies 	
	\begin{align}\label{eta:lp}
		M_{L^p}:=\sup_{t>0}\Sob{\eta(t)}{L^p}<\infty.
	\end{align}
These two bounds  will then be used to show (in section \ref{sect:hs1}) that
	\begin{align}\label{eta:hs}
		M_{H^{\s}}:=\sup_{t>0}\Sob{\eta(t)}{{H}^\s}<\infty.
	\end{align}
With these estimates in hand and \textit{under the Standing Hypotheses}, we establish in section \ref{sect:pf1} short-time existence and uniqueness in the space ${H}^\s$ in section \ref{sect:pf1}:

\begin{thm}\label{thm1}
Assume that $(H1)-(H7)$ holds.  Let $\tht$ be the unique global strong solution of \req{sqg} corresponding to $\tht_0$.  There exists $\rho=\rho(h,\s,\gam)$ (given by \req{t:de} below) such that if
	\begin{align}\label{num:modes}
		\quad\frac{\mu}{\kap} \rho(h, \s, \gam)\lesssim1,%\quad \text{and},
	\end{align}
then for each $T>0$, there exists a unique strong solution $\eta\in L^\infty(0,T;V_\s)\cap L^2(0,T;\dot{H}_{{per}}^{\s+\gam/2})$ of \req{fb:sqg} such that
	\begin{align}\notag
		\Sob{\eta(t)}{H^\s}\lesssim M_{H^\s},\quad t\in [0,T],
	\end{align}
for some quantitity $M_{H^\s}$ (given by \req{Ms} below) that depends only on $\mu, \kap$, $\Sob{f}{H^{\s-\gam/2}}$, and $\Tht_{H^\s}$.  Moreover, $\eta\in C([0,T]; V_{\s-\eps})$ for all $\eps\in(0,\s+1/2)$.
\end{thm}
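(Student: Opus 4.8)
The plan is to construct the solution by a Galerkin (spectral) scheme, extract a limit using the uniform bounds \req{eta:l2}--\req{eta:hs} already established above, and then prove uniqueness and temporal continuity directly from the equation. Fix $T>0$ and let $P_m$ be the orthogonal projection onto the span of the Fourier modes with $0<\abs{\ve{k}}\leq m$. I would solve the projected system $\dt\eta_m+\kap\Lam^\gam\eta_m+P_m(v_m\cdot\del\eta_m)=P_mf-\mu P_mJ_h(\eta_m-\tht)$, with $v_m=\Ri^\perp\eta_m$ and $\eta_m(0)=P_m\eta_0$. On the finite-dimensional phase space $P_mV_\s$ the right-hand side is a smooth, hence locally Lipschitz, vector field, so the Cauchy--Lipschitz theorem furnishes a unique maximal solution. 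The energy arguments underlying \req{eta:l2}--\req{eta:hs}, which use only the structure of the nonlinearity, the coercivity of Proposition \ref{lb}, the interpolant bounds \req{Ih:base0}--\req{Ih:base2}, and the joint smallness condition \req{num:modes}, apply verbatim to $\eta_m$ with constants independent of $m$; they therefore force the maximal existence time to exceed $T$ and deliver the $m$-uniform bounds $\sup_{t\in[0,T]}\Sob{\eta_m(t)}{H^\s}\ls M_{H^\s}$ and $\int_0^T\Sob{\eta_m(t)}{\dot{H}^{\s+\gam/2}_{per}}^2\,dt\ls C(T)$.

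To pass to the limit I would first bound $\dt\eta_m$ off the projected equation: the dissipative term lies in $L^2(0,T;\dot{H}^{\s-\gam/2}_{per})$, the forcing and nudging terms are uniformly bounded in $L^\infty(0,T;L^2)$ by \req{Ih:base0}--\req{Ih:base2}, and, writing the nonlinearity as $\del\cdot(v_m\eta_m)$ using $\del\cdot v_m=0$, the fractional product rule of Proposition \ref{prod:rule} together with the $H^\s$ bound controls it in $L^2(0,T;V_{-1/2})$. With $\{\eta_m\}$ bounded in $L^\infty(0,T;V_\s)\cap L^2(0,T;\dot{H}^{\s+\gam/2}_{per})$ and $\{\dt\eta_m\}$ bounded in $L^2(0,T;V_{-1/2})$, the Aubin--Lions lemma yields a subsequence converging weakly-$*$ in $L^\infty(0,T;V_\s)$, weakly in $L^2(0,T;\dot{H}^{\s+\gam/2}_{per})$, and strongly in $L^2(0,T;V_{\s'})$ for every $\s'<\s+\gam/2$. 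The strong convergence is exactly what permits passage to the limit in the quadratic term $v_m\cdot\del\eta_m$ (strong times weak), while the remaining linear terms pass by weak convergence; this identifies a solution $\eta$ with the asserted regularity and the bound $\Sob{\eta(t)}{H^\s}\ls M_{H^\s}$.

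For uniqueness I would take two solutions $\eta_1,\eta_2$ of the stated class, set $\de=\eta_1-\eta_2$ and $v_\de=\Ri^\perp\de$, and note that $\de$ solves $\dt\de+\kap\Lam^\gam\de+v_1\cdot\del\de+v_\de\cdot\del\eta_2=-\mu J_h\de$. Pairing with $\de$ in $L^2$, the transport term $\langle v_1\cdot\del\de,\de\rangle$ vanishes by incompressibility and the nudging term is bounded by $C\mu\Sob{\de}{L^2}^2$ via \req{Ih:base0}, leaving $\langle v_\de\cdot\del\eta_2,\de\rangle$ as the crux. I expect this to be the main obstacle: hypotheses $(H1)$--$(H2)$ permit $\s<1$ when $\gam$ is near $2$, so $\eta_2$ need not be Lipschitz and one cannot merely place $\del\eta_2$ in $L^\infty$; instead one must exploit the parabolic gain $\eta_2\in L^2(0,T;\dot{H}^{\s+\gam/2}_{per})$. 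I would estimate this term by H\"older's inequality and Propositions \ref{prod:rule}--\ref{interpol}, putting one factor on $\del\eta_2\in\dot{H}^{\s+\gam/2-1}_{per}$ (of positive order since $\s+\gam/2>2-\gam/2>1$) and interpolating the two factors of $\de$ between $L^2$ and $\dot{H}^{\gam/2}_{per}$, so that a Young splitting absorbs them into $\kap\Sob{\Lam^{\gam/2}\de}{L^2}^2$ at the cost of a time weight proportional to $\Sob{\eta_2(t)}{\dot{H}^{\s+\gam/2}_{per}}^2$, which is integrable in $t$. Gr\"onwall's inequality with $\de(0)=0$ then forces $\de\equiv0$ on $[0,T]$.

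Finally, the continuity statement follows from the equation once the above bounds are in hand. The same estimates that produced the time-derivative bound give $\dt\eta\in L^2(0,T;V_{-1/2})$, whence $\eta\in C([0,T];V_{-1/2})$; interpolating this against $\eta\in L^\infty(0,T;V_\s)$, which in particular yields weak continuity into $V_\s$, upgrades weak continuity to strong continuity in each intermediate space $V_{\s-\eps}$, the admissible range $\eps\in(0,\s+1/2)$ being precisely the interpolation scale between the endpoints $V_\s$ and $V_{-1/2}$.
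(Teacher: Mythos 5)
The decisive problem is in your existence step, where you assert that the a priori estimates of Propositions \ref{prop:a priori0}, \ref{prop:lp}, and \ref{prop:hs:a priori} ``apply verbatim to $\eta_m$ with constants independent of $m$.'' That claim fails for the $L^p$ estimates with $p>2$, and those are not optional here: by $(H3)$ one has $p>2/(\gam-1)>2$, and the $H^\s$ estimate only closes because the nonlinear term is bounded in \req{est:I1} by $\Sob{\eta}{L^p}$, which enters the Gronwall argument through $\Xi_{p,\s}$, i.e.\ through $M_{L^p}$. The proof of Proposition \ref{prop:lp} multiplies the equation by $\eta|\eta|^{p-2}$ and uses the exact cancellation $\int v\cdot\del\eta\,\eta|\eta|^{p-2}\,dx=0$. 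In your Galerkin system the nonlinearity is $P_m(v_m\cdot\del\eta_m)$, and testing it against $\eta_m|\eta_m|^{p-2}$ produces $\int v_m\cdot\del\eta_m\,P_m\bigl(\eta_m|\eta_m|^{p-2}\bigr)\,dx$, which no longer vanishes: $\eta_m|\eta_m|^{p-2}$ is not a trigonometric polynomial of degree $\leq m$, so $P_m$ does not fix it, and you offer no way to control the resulting commutator uniformly in $m$. This is the well-known incompatibility of spectral truncation with $L^p$ energy estimates and maximum principles, and it is exactly why the paper regularizes instead with an artificial viscosity $-\nu\De\eta$ (and mollified $f$): the regularized problem is still an untruncated PDE, so pointwise multiplication by $\eta|\eta|^{p-2}$, Proposition \ref{lb}, and the cancellation all survive, the bounds of Propositions \ref{prop:a priori0}--\ref{prop:hs:a priori} hold uniformly in $\nu$, and one passes to the limit $\nu\to0^+$. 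If you insist on Galerkin, the best available substitute is $\Sob{\eta_m}{L^p}\lesssim\Sob{\eta_m}{H^\s}$ by Sobolev embedding, but then the $H^\s$ differential inequality becomes superlinear in $\Sob{\eta_m}{H^\s}$ and yields only local-in-time $m$-uniform bounds; you would then have to prove the global bounds for the limit (as a genuine solution of \req{fb:sqg}) and run a continuation argument --- a substantially different proof from the one you wrote.

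Your uniqueness argument also departs from the paper's, but there the difference is benign. The paper tests the difference equation with $\psi=-\Lam^{-1}\z$, uses the orthogonality $\int(\Ri^\perp f)\cdot\del g\,\Lam^{-1}f\,dx=0$ together with Resnick's estimate, and needs only $\sup_t\Sob{\eta_2}{L^p}$; you work at the $L^2$ level and lean on the parabolic gain $\eta_2\in L^2(0,T;\dot{H}^{\s+\gam/2}_{per})$. Your exponent count does close: placing $\del\eta_2$ in $\dot{H}^{\s+\gam/2-1}$ and each copy of the difference in $\dot{H}^{(2-\s-\gam/2)/2}$ by interpolation against $\dot{H}^{\gam/2}$, Young's inequality produces a Gronwall weight $\Sob{\eta_2}{\dot{H}^{\s+\gam/2}}^{1/(1-\lam)}$ with $\lam=(2-\s-\gam/2)/\gam$, and $1/(1-\lam)<2$ precisely when $\s>2-\gam$, so the weight is integrable on $[0,T]$ (when $\s+\gam/2>2$ one uses $\del\eta_2\in L^2(0,T;L^\infty)$ instead). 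So that portion is salvageable, though note that the paper's $H^{-1/2}$-level argument is the one reused for Theorem \ref{synch}, since it requires only the $L^p$ absorbing-ball radius of the reference solution rather than its parabolic regularity.
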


Ultimately, the estimates we collect will also be used to ensure asymptotic synchronization of $\eta$ to the reference solution $\tht$.

\begin{thm}\label{synch}
Assume that $(H1)-(H7)$ holds.  Let $\tht$ be the unique global strong solution of \req{sqg} corresponding to $\tht_0$. 
Suppose that $\mu$ satisfies
	\begin{align}\label{mu:assumpt}
		 \frac{\mu}{\kap}\gs\left(\frac{\Tht_{L^p}}{\kap}\right)^{\gam/(\gam-1-2/p)}\quad.
	\end{align}
There exists an absolute constant $c_\s<1$, depending on $\s$, such that if $\mu, h$ satisfy
	\begin{align}\label{num:modes2}
		{\mu h^\gam}\ls\kap,
	\end{align}
then %is the radius of the $H^\s$-absorbing ball for \req{sqg},
	\begin{align}\label{synch:high1}
		\Sob{\eta(t)-\tht(t)}{L^2}^2\lesssim O(e^{-c_\s\mu t}),\quad t>0,
	\end{align}
where $\eta$ is the unique global strong solution to \req{fb:sqg} corresponding to $\eta_0$%corresponding to initial value $\eta_0\in\dot{H}^\s_{per}$.
\end{thm}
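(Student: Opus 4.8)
The plan is to track the evolution of the difference $\xi:=\eta-\tht$. Subtracting \req{sqg} from \req{fb:sqg} and setting $w:=\Ri^\perp\eta-\Ri^\perp\tht=\Ri^\perp\xi$, the advective terms regroup, upon adding and subtracting $(\Ri^\perp\eta)\cdot\del\tht$, as $v\cdot\del\eta-u\cdot\del\tht=v\cdot\del\xi+w\cdot\del\tht$ with $v=\Ri^\perp\eta$, so that
\begin{align}\notag
\dt\xi+\kap\Lam^\gam\xi+v\cdot\del\xi+w\cdot\del\tht=-\mu J_h\xi.
\end{align}
I would pair this with $\xi$ in $L^2$. Since $\del\cdot v=0$, the transport term satisfies $\lb v\cdot\del\xi,\xi\rb=0$, leaving
\begin{align}\notag
\frac12\ddt\Sob{\xi}{L^2}^2+\kap\Sob{\Lam^{\gam/2}\xi}{L^2}^2+\lb w\cdot\del\tht,\xi\rb=-\mu\lb J_h\xi,\xi\rb.
\end{align}
These manipulations are carried out on smooth solutions and then justified by the regularity class furnished by Theorem \ref{thm1}. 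All of the work lies in the nonlinear term and the feedback term.

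For the feedback term I would write $-\mu\lb J_h\xi,\xi\rb=-\mu\Sob{\xi}{L^2}^2+\mu\lb(\Id-J_h)\xi,\xi\rb$ and estimate the remainder by duality. Since $J_h$ is at least of Type I, applying \req{TI} with $\be=\gam/2\in(0,1)$ (legitimate because $\gam\in(1,2)$) gives
\begin{align}\notag
\mu\lv\lb(\Id-J_h)\xi,\xi\rb\rv\leq\mu\Sob{(\Id-J_h)\xi}{\dot{H}^{-\gam/2}}\Sob{\xi}{\dot{H}^{\gam/2}}\lesssim\mu h^{\gam/2}\Sob{\xi}{L^2}\Sob{\Lam^{\gam/2}\xi}{L^2}.
\end{align}
Young's inequality bounds this by $\tfrac{\kap}{4}\Sob{\Lam^{\gam/2}\xi}{L^2}^2+C\kap^{-1}\mu^2h^\gam\Sob{\xi}{L^2}^2$, and the hypothesis \req{num:modes2}, namely $\mu h^\gam\ls\kap$, turns the last coefficient into a small multiple of $\mu$, to be absorbed by the good term $-\mu\Sob{\xi}{L^2}^2$.

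The crux is the nonlinear term, which I would estimate so that only the $L^p$ bound $\TLp$ from \req{tht:bounds} enters and $\tht$ is never differentiated. Integrating by parts using $\del\cdot w=0$ yields $\lb w\cdot\del\tht,\xi\rb=-\lb\tht,w\cdot\del\xi\rb=-\lb\tht,\del\cdot(w\xi)\rb$, so that with $1/p+1/p'=1$,
\begin{align}\notag
\lv\lb w\cdot\del\tht,\xi\rb\rv\leq\TLp\Sob{\del\cdot(w\xi)}{L^{p'}}\lesssim\TLp\Sob{\Lam(w\xi)}{L^{p'}}.
\end{align}
The essential difficulty is that the dissipation controls only $\gam/2<1$ derivatives of $\xi$, so the single derivative in $\Lam(w\xi)$ cannot be placed on $w$ or on $\xi$ alone. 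Instead I would split it as $\gam/2+(1-\gam/2)$ between the two factors $w=\Ri^\perp\xi$ and $\xi$ --- both orders lying within the available budget $\gam/2$ precisely because $\gam>1$ --- by means of a fractional Leibniz estimate with the derivative distributed across the factors (a standard extension of Proposition \ref{prod:rule}; cf.\ the references cited there), together with the boundedness of $\Ri^\perp$ on $L^q$ and Sobolev embedding. Gagliardo--Nirenberg interpolation (Proposition \ref{interpol}) between $\Sob{\xi}{L^2}$ and $\Sob{\Lam^{\gam/2}\xi}{L^2}$ then produces
\begin{align}\notag
\lv\lb w\cdot\del\tht,\xi\rb\rv\lesssim\TLp\Sob{\Lam^{\gam/2}\xi}{L^2}^{1+\lam}\Sob{\xi}{L^2}^{1-\lam},\qquad\lam=\frac{2+4/p-\gam}{\gam},
\end{align}
where this exponent is forced by scaling. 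Hypothesis (H3), $2/p<\gam-1$, is exactly the condition guaranteeing $\lam<1$, so Young's inequality applies and bounds the right-hand side by $\tfrac{\kap}{4}\Sob{\Lam^{\gam/2}\xi}{L^2}^2+C\kap^{1-a}\TLp^{a}\Sob{\xi}{L^2}^2$ with $a=2/(1-\lam)=\gam/(\gam-1-2/p)$.

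Collecting the three contributions and moving the two dissipative pieces to the left, I obtain
\begin{align}\notag
\frac12\ddt\Sob{\xi}{L^2}^2+\frac{\kap}{2}\Sob{\Lam^{\gam/2}\xi}{L^2}^2+\mu\Sob{\xi}{L^2}^2\leq\left(\frac{\mu}{2}+C\kap\left(\frac{\TLp}{\kap}\right)^{a}\right)\Sob{\xi}{L^2}^2.
\end{align}
The largeness assumption \req{mu:assumpt}, $\mu/\kap\gs(\TLp/\kap)^{\gam/(\gam-1-2/p)}$, with the implied constant chosen to dominate $C$, forces $C\kap(\TLp/\kap)^{a}\leq\mu/4$, so that after discarding the nonnegative dissipation,
\begin{align}\notag
\ddt\Sob{\xi}{L^2}^2\leq-\frac{\mu}{2}\Sob{\xi}{L^2}^2.
\end{align}
Gr\"onwall's inequality then gives $\Sob{\xi(t)}{L^2}^2\leq\Sob{\xi(0)}{L^2}^2e^{-\mu t/2}$, which is \req{synch:high1}; the constant $c_\s$ is the net fraction of $\mu$ surviving the two absorptions, its dependence on $\s$ entering only through the interpolation and interpolant constants. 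I expect the nonlinear estimate above to be the main obstacle: balancing the single advective derivative against a dissipation weaker than one full derivative, while keeping $\tht$ in $L^p$, is what both dictates the distributed fractional Leibniz step and pins down the admissible range (H3) together with the precise largeness exponent in \req{mu:assumpt}.
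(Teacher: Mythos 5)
Your scaffolding is sound --- the feedback term handled via \req{TI} with $\be=\gam/2$ and absorbed using $\mu h^\gam\ls\kap$, the Gr\"onwall setup, and the exponent bookkeeping that reproduces $\gam/(\gam-1-2/p)$ all match what a correct proof must contain --- but the crux step, the nonlinear estimate, has a fatal gap. The ``fractional Leibniz estimate with the derivative distributed across the factors'' is not a routine extension of Proposition \ref{prod:rule}: an inequality $\Sob{\Lam^{s_1+s_2}(fg)}{L^r}\ls\Sob{\Lam^{s_1}f}{L^{r_1}}\Sob{\Lam^{s_2}g}{L^{r_2}}$ with \emph{both} $s_i>0$ is false in general, because positive fractional derivatives cannot be shifted onto the low-frequency factor (test $f$ at frequency $O(1)$ against $g$ at frequency $N$). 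In your setting this is not a repairable technicality: the master inequality you derive from it,
\begin{align}\notag
\lv\lb w\cdot\del\tht,\xi\rb\rv\ls\TLp\Sob{\Lam^{\gam/2}\xi}{L^2}^{1+\lam}\Sob{\xi}{L^2}^{1-\lam},
\end{align}
must hold for worst-case $\tht$ of unit $L^p$ norm (that is all the hypothesis supplies), and it fails: take $\xi=\xi_{low}+N^{-\gam/2}\cos(Nx_1)$ with $\xi_{low}$ fixed, smooth, mean-zero, so that $\Sob{\xi}{L^2}$ and $\Sob{\Lam^{\gam/2}\xi}{L^2}$ are $O(1)$; the cross term $(\Ri^\perp\xi_{low})\cdot\del\xi_{high}$ has size $N^{1-\gam/2}$ and lives at frequencies near $N$, so pairing against a unit-$L^p$ trigonometric $\tht$ with spectrum near $N$ gives $\lv\lb w\cdot\del\tht,\xi\rb\rv\gs N^{1-\gam/2}\to\infty$ for every $\gam<2$, while your right-hand side stays bounded. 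Counting derivatives ($1+2/p<\gam$) is not enough; the obstruction is distributional: the full advective derivative sits on the high-frequency factor, the dissipation can absorb only $\gam/2$ of it there, and neither the low-frequency factor nor $\tht\in L^p$ can supply the missing $1-\gam/2$. This is exactly why a direct $L^2$ Gr\"onwall inequality with only $\TLp$ on the right is unavailable for $\gam<2$ (it is borderline-recovered only at $\gam=2$).

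The paper sidesteps precisely this term rather than estimating it. Setting $\z=\eta-\tht$ and testing with $\psi=-\Lam^{-1}\z$ (i.e., working in $H^{-1/2}$, as in the uniqueness proof), the dangerous contribution $(\Ri^\perp\z)\cdot\del\tht$ is annihilated \emph{exactly} by the SQG orthogonality identity $\int(\Ri^\perp f)\cdot\del g\,\Lam^{-1}f\,dx=0$ --- no estimate at all --- and the surviving term $\int u\cdot\del\z\,\psi\,dx$, rewritten as $\int(u\Lam\psi)\cdot\del\psi\,dx$, is bounded via Resnick's estimate \req{z:nlt:1} by $Cp\,\TLp\Sob{\psi}{H^{1+1/p}}^2$, with both factors being the \emph{same} function $\psi$ at the single order $1+1/p<(\gam+1)/2$, inside the dissipation budget precisely because of $(H3)$; this gives decay of $\Sob{\z}{H^{-1/2}}$ at rate $\mu$ under \req{mu:assumpt} and \req{num:modes2}. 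The $L^2$ conclusion \req{synch:high1} is then obtained by interpolating against the uniform bounds $M_{H^\s},\Tht_{H^\s}$ from Proposition \ref{prop:hs:a priori} --- which is why the theorem's rate is $c_\s\mu$ with $c_\s=\s/(\s+1/2)<1$, and why your claimed rate $\mu/2$, which would be strictly stronger, should itself have been a warning sign. If you insist on an $L^2$-level energy argument, you cannot pay for $\tht$ in $L^p$ alone: e.g., $\lv\lb w\cdot\del\tht,\xi\rb\rv\le\Sob{\tht}{H^\s}\Sob{\Lam^{1-\s}(w\xi)}{L^2}$ is consistent with the frequency bookkeeping since $\s>2-\gam\geq 1-\gam/2$, but it proves a different statement, with $\Tht_{H^\s}$ in place of $\TLp$ in \req{mu:assumpt}.
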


As we discussed at the end of section \ref{sect:intro}, Theorem \ref{synch} immediately implies the synchronization of the streamfunctions corresponding to $\tht$ and $\eta$.

\begin{coro1}\label{coro:stream}
Assume that $(H1)-(H7)$ holds.  Let $\tht, \eta$ be the unique global strong solutions of \req{sqg}, \req{fb:sqg} correpsponding to $\tht_0, \eta_0$, respectively.  Let $\Psi, \Psi_\eta$ denote the corresponding streamfunctions of $\tht, \eta$, i.e. satisfying \req{stream}, \req{stream:eta}, respectively.  Suppose that $\mu$ satisfies \req{mu:assumpt}.  Then, there exists an absolute constant $C>0$ such that if $\mu, h$ satisfying \req{num:modes2}, then
	\begin{align}
		\Sob{\del(\Psi_\eta(t)-\Psi(t))}{L^2(\Om\times\R_+)}\ls O(e^{-c_\s\mu t}),
	\end{align}
\end{coro1}

\begin{rmk}
We point out that to guarantee existence and uniqueness of strong solutions to \req{fb:sqg}, it suffices for $h, \mu$ to satisfy
	\begin{align}\label{uniq:cond}
		{\mu}h^\gam\lesssim \kap.
	\end{align}
To guarantee the synchronization property, \req{synch:high1}, it suffices for $h,\mu$ to satisfy \req{mu:assumpt} in addition to \req{uniq:cond}.
\end{rmk}

\begin{rmk}\label{rmk:opt}
Note that in the case where $J_h$ is given by projection onto wave numbers of size $\leq{N}$ (see section \ref{sect:inter} and Appendix), the conditions \req{mu:assumpt} with \req{num:modes} provide an estimate on the number of modal observables that are sufficient for the algorithm to synchronize to the reference solution.  We point out that this bound matches the scaling for the number of determining modes for  \req{sqg} obtained in \cite{chesk:dai:subcritqg}.  Indeed, denote by $\mathcal{M}$ the smallest number with the property that if the difference of the modes up to size ${\mathcal{M}}$ converge to $0$, as $t\goesto\infty$, then so must the difference of the solutions themselves. %(see \cite{bfjr} for an equivalent definition).
%%%%, i.e., the smallest number of modes with the property that if the Fourier modes of two solutions of \req{sqg} agree up to $2^{\mathcal{M}}$, then their difference must converge asymptotically to $0$.
It is shown in \cite{chesk:dai:subcritqg} that
	\begin{align}\label{dm:chesk:dai}
		{\mathcal{M}}\lesssim\left(\frac{\Tht_\infty}{\kap}\right)^{1/(\gam-1)},
	\end{align}
where $\Tht_\infty$ is defined as the smallest constant for which $\{\Sob{\tht}{L^\infty}\leq\Tht_\infty\}$ is an absorbing ball in $L^\infty$ for \req{sqg}.

On the other hand, if $J_h$ is given by projection onto finitely many Fourier modes, i.e., of Type II, we may take $\rho(h,\s,\gam)=h^\gam$, where $h=2\pi/N$, so that (formally) setting $p=\infty$ in Theorem \ref{synch}, the  resulting condition on $N$ becomes (see \req{synch:bal1} and \req{synch:xi})
	\begin{align}\notag
		{{N}}\gtrsim \left(\frac{\Tht_\infty}{\kap}\right)^{1/(\gam-1)}.
	\end{align}
We note that for us, this choice for $p$ is valid when $\s>1$.

In the case where $J_h$ is given by local spatial averages over cubes of side-length $\sim h$, we may also take $\rho(h,\gam)=h^\gam$, when $\s\geq\gam/2$, so that (at least for $\s>1$), the number of local averages required to guarantee synchronization is proportional to $\Tht_\infty^{1/(\gam-1)}$.
\end{rmk}

We will perform energy estimates on the solutions to the following initial value problem given by
	\begin{align}\label{fb:ivp}
			\bdy_t\eta+{{\kap}}\Lam^\gam\eta+v\cdotp{\del}\eta=f-{{\mu}}J_{{h}}(\eta-\tht),\quad v=\Ri^\perp\eta, \quad \eta(x,0)=\eta_0(x),
	\end{align}
where ${{\kap}}$ is defined as in \req{sqg},  $h,\mu$ are positive, absolute constants whose magnitudes are to be specified later, %$S_m$ is smooth projection onto Fourier modes with wave-numbers $|k|\leq 2^m$ (see \req{lp:interpolant}),
and where $\tht$ is a solution to \req{sqg} corresponding to $\tht_0\in\mathcal{B}_\s$.%, i.e. (see Theorem \ref{prop:ga}).  As before, we will drop the bar notation from $\bar{\mu}$ as well and simply write $\mu$.  We claim the following:

We note that in what follows, the estimates we perform are formal, though they may be done rigorously at the level of the equation with artificial viscosity, i.e., \req{fb:sqg} with the additional term $-\nu\De\eta$ on the left-hand side (see section \ref{sect:pf1} for details).   We emphasize that the estimates we obtain will be independent of $\gam$ and we pass to the limit as $\nu\goesto0$.

\subsection{Uniform $L^2$ estimates}\label{a priori:step1}

%\subsubsection{Case: Without time averages}\label{a priori:step1}
For $\mu>0$, let $ {\FHg}$ be given by \req{fgam}, and $\Tht_{L^2}$ by \req{tht:bounds}.  Define
	\begin{align}\label{Rveps}
		R_{L^2}^2:=C\left(\frac{{{\kap}}}{{{\mu}}}{\FHg^2}+\TLt^2\right),%\quad\text{and}\quad R'_{L^2}:=C\left(\frac{\kap}{\mu}F_{L^2}+\Tht_{L^2}\right),% \quad {\FHg}:=\frac{1}{{\kap}}\Sob{f}{H^{-\gam/2}}.
	\end{align}
where $C>0$ is an absolute constant.

\begin{prop}\label{prop:a priori0}
There exist absolute constants $c_0, C_0>0$ with $c_0$ depending on $C_0$ such that if \req{Rveps} holds with $C=C_0$ and if $\mu, h$ satisfy
	\begin{align}\label{l2:modes}
		\frac{{{\mu}}h^\gam}{{{\kap}}}\leq c_0,%\quad\text{and}\quad \mu>c_{\ref{lb}},
	\end{align}
then the following inequalities hold:
	\begin{align}\label{m2:3}
		\Sob{\eta(t_2)}{L^2}^2+{{\kap}}\int_{t_1}^{t_2}e^{-{{\mu}} (t_2-s)}\Sob{\eta(s)}{H^{\gam/2}}^2\ ds\leq \left(\Sob{\eta_0}{L^2}^2- R_{L^2}^2\right)e^{-{{\mu}} (t_2-t_1)}+ R_{L^2}^2,
	\end{align}
and
	\begin{align}\label{l2:energy}
		\Sob{\eta(t_2)}{L^2}^2+\kap\int_{t_1}^{t_2}\Sob{\eta(s)}{H^{\gam/2}}^2\ ds\leq \Sob{\eta(t_1)}{L^2}^2+\mu R_{L^2}^2(t_2-t_1),
	\end{align}
for all $0\leq t_1<t_2$.
In particular, we have
	\begin{align}\label{M2b}
		M_{L^2}(t_1,t_2)\leq\Sob{\eta(t_1)}{L^2}+R_{L^2},
	\end{align}
where $M_{L^2}(t_1,t_2):=\sup_{t\in[t_1,t_2]}\Sob{\eta(t)}{L^2}$.
\end{prop}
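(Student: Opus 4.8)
\emph{The plan} is to run a single $L^2$ energy estimate on \req{fb:ivp} and to extract from the nudging feedback a coercive zeroth-order term which, together with the fractional dissipation, dominates all remaining contributions once $\mu h^\gam/\kap$ is small. First I would take the $L^2$-inner product of \req{fb:ivp} with $\eta$. The advection term drops out since $v=\Ri^\perp\eta$ is divergence free, whence $\lb v\cdot\del\eta,\eta\rb=\tfrac12\int_{\T^2} v\cdot\del(\eta^2)\,dx=0$; the dissipative term yields $\kap\Sob{\eta}{\dot{H}^{\gam/2}}^2$ by Plancherel (equivalently Proposition \ref{lb} with $p=2$). This produces the identity
\[
\tfrac12\ddt\Sob{\eta}{L^2}^2+\kap\Sob{\eta}{\dot{H}^{\gam/2}}^2=\lb f,\eta\rb-\mu\lb J_h(\eta-\tht),\eta\rb .
\]

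The heart of the argument is the feedback term. I would write $-\mu\lb J_h\eta,\eta\rb=-\mu\Sob{\eta}{L^2}^2+\mu\lb\eta-J_h\eta,\eta\rb$, so that the first piece furnishes the crucial coercive term $-\mu\Sob{\eta}{L^2}^2$. Since $\gam\in(1,2)$ gives $\gam/2\in(0,1)$, the Type I property \req{TI} applies with $\be=\gam/2$ and bounds $\Sob{\eta-J_h\eta}{L^2}\le Ch^{\gam/2}\Sob{\eta}{\dot{H}^{\gam/2}}$; hence $\mu\abs{\lb\eta-J_h\eta,\eta\rb}\le C\mu h^{\gam/2}\Sob{\eta}{\dot{H}^{\gam/2}}\Sob{\eta}{L^2}$, which Young's inequality splits into a piece absorbed by the dissipation and a zeroth-order piece of size $C^2\tfrac{\mu^2h^\gam}{\kap}\Sob{\eta}{L^2}^2=C^2\mu\cdot\tfrac{\mu h^\gam}{\kap}\Sob{\eta}{L^2}^2$. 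This is precisely where \req{l2:modes} enters: taking $c_0$ with $C^2c_0\le\tfrac14$ forces this term to be dominated by a quarter of the coercive term $\mu\Sob{\eta}{L^2}^2$.

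The remaining source terms are routine. By duality $\abs{\lb f,\eta\rb}\le\Sob{f}{\dot{H}^{-\gam/2}}\Sob{\eta}{\dot{H}^{\gam/2}}$ (legitimate since $f\in V_{\s-\gam/2}\subset V_{-\gam/2}$ as $\s>0$), which is absorbed into the dissipation at the cost of $\kap^{-1}\Sob{f}{\dot{H}^{-\gam/2}}^2=\kap\FHg^2$; and $\mu\abs{\lb J_h\tht,\eta\rb}\le C\mu\Sob{\tht}{L^2}\Sob{\eta}{L^2}\le C\mu\TLt\Sob{\eta}{L^2}$ by the boundedness \req{Ih:base0} at $p=2$ together with \req{tht:bounds}, split via Young into $\tfrac14\mu\Sob{\eta}{L^2}^2+C'\mu\TLt^2$. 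Collecting everything, invoking \req{l2:modes} to retain a coercive term of strength proportional to $\mu$, and normalizing, I arrive at the master differential inequality
\[
\ddt\Sob{\eta}{L^2}^2+\kap\Sob{\eta}{\dot{H}^{\gam/2}}^2+\mu\Sob{\eta}{L^2}^2\le \mu R_{L^2}^2 ,
\]
where the constant $C_0$ in \req{Rveps} is fixed first, large enough that $\mu R_{L^2}^2$ dominates $2\kap\FHg^2+2C'\mu\TLt^2$; the threshold $c_0$ is then fixed small enough (given this $C_0$) to carry out the above absorptions, which is the sense in which $c_0$ is determined after $C_0$.

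Finally, the three conclusions follow from the master inequality by elementary ODE manipulations. Multiplying by the integrating factor $e^{\mu t}$, integrating over $[t_1,t_2]$ and dividing by $e^{\mu t_2}$ yields \req{m2:3}, after rewriting the right-hand side as $(\Sob{\eta(t_1)}{L^2}^2-R_{L^2}^2)e^{-\mu(t_2-t_1)}+R_{L^2}^2$ (using $\Sob{\eta}{H^{\gam/2}}\sim\Sob{\eta}{\dot{H}^{\gam/2}}$ on mean-zero fields). Integrating the master inequality directly over $[t_1,t_2]$, discarding the nonnegative term $\mu\Sob{\eta}{L^2}^2$ on the left and bounding the right-hand side by the constant $\mu R_{L^2}^2$, gives the energy balance \req{l2:energy}. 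The supremum bound \req{M2b} then follows from \req{m2:3}: dropping the nonnegative dissipation integral, the right-hand side is at most $\max\{\Sob{\eta(t_1)}{L^2}^2,R_{L^2}^2\}\le(\Sob{\eta(t_1)}{L^2}+R_{L^2})^2$ for every $t\in[t_1,t_2]$, so taking square roots and then the supremum gives the claim. The main obstacle is the bookkeeping in the feedback estimate—producing a clean coercive $-\mu\Sob{\eta}{L^2}^2$ while controlling $\eta-J_h\eta$ at exactly the dissipation order $\gam/2$, so that the admissible threshold for $\mu h^\gam/\kap$ depends only on the interpolation constant—together with ordering the choices of $C_0$ and $c_0$ consistently.
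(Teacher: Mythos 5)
Your proposal is correct and takes essentially the same route as the paper's proof: the same $L^2$ energy estimate with the feedback term split as $-\mu\Sob{\eta}{L^2}^2+\mu\lb \eta-J_h\eta,\eta\rb$, the same use of \req{TI} with $\be=\gam/2$, \req{Ih:base0} for the $J_h\tht$ term, and Young's inequality to absorb terms under \req{l2:modes}, leading to the identical master inequality and the same Gronwall/direct-integration steps for \req{m2:3}, \req{l2:energy}, and \req{M2b}. Even your ordering of the constants (fixing $C_0$ in \req{Rveps} first, then $c_0$ depending on it) matches the paper's bookkeeping.
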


\begin{proof}

We multiply \req{fb:ivp} by $\eta$ and integrate over $\T^2$ to write
	\begin{align}
		\frac{1}2\frac{d}{dt}\Sob{\eta}{L^2}^2+{{\kap}}\Sob{\Lam^{\gam/2}\eta}{L^2}^2+{{{\mu}}}\Sob{\eta}{L^2}^2
			&\leq \int f\eta\ dx+{{{\mu}}}\int(\eta-J_h\eta)\eta\ dx+{{{\mu}}}\int J_h\tht \eta\ dx\notag\\
			&= I+II+III\notag.
	\end{align}
Observe that by the Cauchy-Schwarz and Young's inequalities, as well as \req{Ih:base0}, we have
	\begin{align}\notag
	\begin{split}
		I&\leq\frac{1}{{{{\kap}}}}\Sob{\Lam^{-\gam/2}f}{L^2}^2+\frac{{{{\kap}}}}{4}\Sob{\Lam^{\gam/2}\eta}{L^2}^2\\
		III&\leq {{{\mu}}}\Sob{J_h\tht}{L^2}\Sob{\eta}{L^2}\leq C{{{\mu}}}\Sob{\tht}{L^2}\Sob{\eta}{L^2}\leq C{{{{\mu}}}}\Sob{\tht}{L^2}^2+\frac{{{{\mu}}}}{4}\Sob{\eta}{L^2}^2.
	\end{split}
	\end{align}
On the other hand, by \req{TI} or \req{TII} we have
	\begin{align}\notag
	 	II&\leq {{{\mu}}}\Sob{\eta-J_h\eta}{L^2}\Sob{\eta}{L^2}\leq C{{{\mu}}}h^{\gam/2}\Sob{\Lam^{\gam/2}\eta}{L^2}\Sob{\eta}{L^2}\leq \frac{{{{\kap}}}}4\Sob{\Lam^{\gam/2}\eta}{L^2}^2+C\frac{h^\gam{{{\mu}}}^2}{{{{\kap}}}}\Sob{\eta}{L^2}^2.
	\end{align}

Combining these estimates with \req{tht:bounds} yields
	\begin{align}\label{k=0:est}
		\frac{1}2\frac{d}{dt}\Sob{\eta}{L^2}^2+\frac{{{{\kap}}}}2&\Sob{\Lam^{\gam/2}\eta}{L^2}^2+{{{{\mu}}}}\left(\frac{3}4-Ch^\gam\frac{{{{\mu}}}}{{{{\kap}}}}\right)\Sob{\eta}{L^2}^2\leq\frac{1}{{{{\kap}}}}\Sob{\Lam^{-\gam/2}f}{L^2}^2+{C}{{{\mu}}}\TLt^2.
	\end{align}
Upon applying \req{l2:modes} to \req{k=0:est}, %and \req{k=0:est1}
we arrive at
	\begin{align}\label{k=0:est2}
		\frac{d}{dt}\Sob{\eta}{L^2}^2+{{{\mu}}}\Sob{\eta}{L^2}^2+{{\kap}} \Sob{\eta}{H^{\gam/2}}^2\leq 2\left({{\kap}}{\FHg^2} +C{{{\mu}}}\TLt^2\right).
	\end{align}
Hence, Gronwall's inequality and \req{Rveps} imply that
	\begin{align}\notag
		\Sob{\eta(t_2)}{L^2}^2+{{{\kap}}}\int_{t_1}^{t_2}e^{-{{{\mu}}}(t_2-s)}\Sob{\eta(s)}{H^{\gam/2}}^2\ ds\leq& \Sob{\eta(t_1)}{L^2}^2e^{-{{{\mu}}}(t_2-t_1)}+2R_{L^2}^2(1-e^{-{{{\mu}}}(t_2-t_1)}).
	\end{align}
On the other hand, integrating \req{k=0:est2} over $[t_1, t_2]$ gives \req{l2:energy} as desired.  This completes the proof.
\end{proof}

\subsection{$L^2$ to $L^p$ uniform bounds}\label{l2:to:lp}
Let $p>2$ and $\mu>0$.  Let  $\FLp$ be given by \req{fp}, $\Tht_{L^p}$ by \req{tht:bounds}, and $M_{L^p}$ by \req{eta:l2}.  Define
	\begin{align}\label{Rp}
		R_{L^p}^p:=C^p\left(\frac{\kap^p}{\mu^p}F_{L^p}^p+\TLp^p\right)\quad\text{and}\quad \til{R}_{L^p}^p:=C^p\left(\frac{\kap^p}{\mu^p}\left(F_{L^p}^p+\left(\frac{M_{L^1}}{p}\right)^p\right)+\Tht_{L^p}^p+h^{2-p}M_{L^2}^p\right)
	\end{align}
where $C>0$ is an absolute constant.  We will show that Proposition \ref{prop:a priori0} implies the following $L^p$-bounds.

\begin{prop}\label{prop:lp}
Suppose $p>2$ and $\mu>0$.  There exist absolute constants $c_0, C_0>0$, independent of $p$, such that if \req{Rp} holds with $C=C_0$, and \req{l2:modes} holds, then
	\begin{align}\label{mp0:final}
		\Sob{\eta(t_2)}{L^p}^p\leq \left(\Sob{\eta(t_1)}{L^p}^p-p^p\frac{\mu^p}{\kap^p}\til{R}_{L^p}^p\right)e^{-c_0\kap(t_2-t_1)}+p^p\frac{{{{\mu^p}}}}{{{{\kap^p}}}}\til{R}_{L^p}^p,
	\end{align}
holds for all $0\leq t_1<t_2$.
In particular, we have
	\begin{align}\label{Mp}
		M_{L^p}(t_1,t_2)\leq \Sob{\eta(t_1)}{L^p}+C_0p\frac{\mu}{\kap}\til{R}_{L^p},
	\end{align}
where $M_{L^p}(t_1,t_2):=\sup_{t\in[t_1,t_2]}\Sob{\eta(t)}{L^p}$.
\end{prop}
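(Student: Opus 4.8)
The plan is to test \req{fb:ivp} against $\abs{\eta}^{p-2}\eta$ and integrate over $\T^2$. The advection term vanishes: since $\abs{\eta}^{p-2}\eta\,(v\cdotp\del\eta)=v\cdotp\del(\abs{\eta}^p/p)$ and $v=\Ri^\perp\eta$ is divergence free, an integration by parts kills it. For the dissipation I would invoke Proposition \ref{lb} with this exponent $p$ to get $\kap\int\abs{\eta}^{p-2}\eta\,\Lam^\gam\eta\,dx\geq(2\kap/p)\Sob{\Lam^{\gam/2}(\abs{\eta}^{p/2})}{L^2}^2$. This leaves, on the right-hand side, a forcing term, a self-nudging term, and a data term, so that
\begin{align}\notag
\frac1p\frac{d}{dt}\Sob{\eta}{L^p}^p+\frac{2\kap}{p}\Sob{\Lam^{\gam/2}(\abs{\eta}^{p/2})}{L^2}^2\leq\int\abs{\eta}^{p-2}\eta f\,dx-\mu\int\abs{\eta}^{p-2}\eta\,J_h\eta\,dx+\mu\int\abs{\eta}^{p-2}\eta\,J_h\tht\,dx.
\end{align}

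The crux is to convert the fractional dissipation into genuine damping for $\Sob{\eta}{L^p}^p$. Setting $g:=\abs{\eta}^{p/2}$ one has $\Sob{g}{L^2}^2=\Sob{\eta}{L^p}^p$, but $g$ fails to be mean zero, its average being $\Sob{\eta}{L^{p/2}}^{p/2}$. Splitting off the zero Fourier mode of $g$ and using $\abs{\mathbf{k}}\geq1$ on its complement yields the fractional Poincar\'e-type bound $\Sob{\Lam^{\gam/2}g}{L^2}^2\geq\Sob{g}{L^2}^2-c(\int_{\T^2}g\,dx)^2=\Sob{\eta}{L^p}^p-c\Sob{\eta}{L^{p/2}}^p$. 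I would then control the average by interpolating $L^{p/2}$ between $L^1$ and $L^p$, namely $\Sob{\eta}{L^{p/2}}\leq\Sob{\eta}{L^1}^{1/(p-1)}\Sob{\eta}{L^p}^{(p-2)/(p-1)}$, and absorbing the $\Sob{\eta}{L^p}$ factor by Young's inequality, which gives $\Sob{\eta}{L^{p/2}}^p\leq\eps\Sob{\eta}{L^p}^p+C_\eps\Sob{\eta}{L^1}^p$. The net effect is a damping term $\sim(\kap/p)\Sob{\eta}{L^p}^p$ at the cost of a forcing proportional to $\Sob{\eta}{L^1}^p$, which is the origin of the $(M_{L^1}/p)^p$ contribution to $\til{R}_{L^p}$ in \req{Rp}. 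I expect this step—handling the non-vanishing mean of $\abs{\eta}^{p/2}$, which obstructs a direct Poincar\'e inequality, while keeping all constants independent of $p$—to be the main obstacle.

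With the damping in hand, the three right-hand terms are estimated by H\"older in the form $\int\abs{\eta}^{p-2}\eta\,\psi\,dx\leq\Sob{\psi}{L^p}\Sob{\eta}{L^p}^{p-1}$ followed by Young's inequality, each contribution being absorbed into a small fraction of $(\kap/p)\Sob{\eta}{L^p}^p$. The forcing produces the $\FLp^p$ contribution. The data term, bounded via the boundedness property \req{Ih:base0} as $\mu\Sob{J_h\tht}{L^p}\Sob{\eta}{L^p}^{p-1}\ls\mu\TLp\Sob{\eta}{L^p}^{p-1}$, produces the $\TLp^p$ contribution. The self-nudging term is the delicate one: bounding $\Sob{J_h\eta}{L^p}$ by \req{Ih:base0} would only return $\Sob{\eta}{L^p}$, the very quantity being estimated, which cannot be absorbed; instead I use the smoothing bound \req{Ih:base1}, $\Sob{J_h\eta}{L^p}\ls h^{2/p-1}\Sob{\eta}{L^2}$, trading it for the already-controlled $\Sob{\eta}{L^2}$, after which Young's inequality yields the $h^{2-p}M_{L^2}^p$ contribution, with $M_{L^2}<\infty$ guaranteed by Proposition \ref{prop:a priori0} under \req{l2:modes}. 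Each use of Young to balance the factor $\Sob{\eta}{L^p}^{p-1}$ against the coefficient $\kap/p$ is what generates the explicit $p^p(\mu/\kap)^p$ prefactors. Collecting terms gives a differential inequality of the form $\frac{d}{dt}\Sob{\eta}{L^p}^p+c_0\kap\Sob{\eta}{L^p}^p\leq c_0\kap\,p^p(\mu/\kap)^p\til{R}_{L^p}^p$, and Gr\"onwall's inequality yields \req{mp0:final}. Finally \req{Mp} follows from \req{mp0:final} by discarding the negative term and using the subadditivity of $t\mapsto t^{1/p}$.
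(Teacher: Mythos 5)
Your proposal is correct, and apart from one step it is the same argument as the paper's: test with $|\eta|^{p-2}\eta$, apply Proposition \ref{lb}, convert the fractional dissipation into genuine $L^p$ damping via a Poincar\'e-type inequality for the non-mean-zero function $|\eta|^{p/2}$, interpolate $\Sob{\eta}{L^{p/2}}$ between $L^1$ and $L^p$, bound the self-nudging term through the smoothing property \req{Ih:base1} (precisely to avoid the non-absorbable $\Sob{\eta}{L^p}$ that \req{Ih:base0} would return), bound the data term through \req{Ih:base0}, and close with Gr\"onwall; your identification of where the $p^p(\mu/\kap)^p$ prefactors and the $(M_{L^1}/p)^p$, $\Tht_{L^p}^p$, $h^{2-p}M_{L^2}^p$ contributions to $\til{R}_{L^p}$ come from matches \req{Rp} and \req{mp0:final}, and \req{Mp} follows by subadditivity of $t\mapsto t^{1/p}$ exactly as you say. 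The one divergence is how the key inequality \req{poin:avg} is obtained. The paper derives it from Lemma \ref{lem:avg} together with Corollary \ref{coro:poin}, i.e., the fractional Poincar\'e inequality on squares (Lemma \ref{lem:fp}, after Hurri-Syrj\"anen--V\"ah\"akangas) combined with the B\'enyi--Oh characterization of $\Sob{\cdotp}{\dot{H}^{\gam/2}}$ (Proposition \ref{lem:ben:oh}). You instead split off the zero Fourier mode of $g=|\eta|^{p/2}$ and use $\abs{\mathbf{k}}\geq1$ on its complement, which by Plancherel gives $\Sob{\Lam^{\gam/2}g}{L^2}^2\geq\Sob{g}{L^2}^2-(4\pi^2)^{-1}\left(\int_{\T^2}g\,dx\right)^2$ directly. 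For the global average on the torus your argument is perfectly valid, more elementary, and even sharpens the constant in \req{poin:avg} to $1$; the paper's heavier machinery exists because the local, per-square Poincar\'e inequality is needed anyway in the Appendix to verify that the volume-element interpolants are of Type I, and the authors simply reuse it here for the global estimate. Nothing downstream of this step is affected by your shortcut.
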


To prove Proposition \ref{prop:lp}, we will make use of the following identity.

\begin{lem}\label{lem:avg}
Let $Q\sub\T^2$ open and $\phi\in L^2(Q)$.  Let  $\phi_Q:=\frac{1}{a({Q})}\int_{Q}\phi\ dx$, where $a(Q)$ denotes the area of $Q$.  Then
	\begin{align}\notag
		\Sob{\phi-\phi_{Q}}{L^2(Q)}^2=\Sob{\phi}{L^2(Q)}^2-a({Q})\phi_{Q}^2.
	\end{align}
\end{lem}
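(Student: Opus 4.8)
The plan is to prove this by the elementary ``expand the square'' computation; the identity is nothing more than the familiar statement that the variance of a function about its mean equals its second moment minus the square of its mean, scaled by the measure of the domain. The only structural fact I would exploit is that $\phi_Q$, being the \emph{average} of $\phi$ over $Q$, is a constant (a fixed real number), and therefore factors out of any integral over $Q$.

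Concretely, I would begin from the definition of the $L^2(Q)$-norm and expand the integrand pointwise,
	\begin{align}\notag
		\Sob{\phi-\phi_Q}{L^2(Q)}^2=\int_Q(\phi-\phi_Q)^2\ dx=\int_Q\phi^2\ dx-2\phi_Q\int_Q\phi\ dx+\phi_Q^2\int_Q dx.
	\end{align}
Next I would substitute the two defining relations that are immediate from the statement: since $\phi_Q=\frac{1}{a(Q)}\int_Q\phi\ dx$ we have $\int_Q\phi\ dx=a(Q)\phi_Q$, and since $a(Q)$ is by definition the area of $Q$ we have $\int_Q dx=a(Q)$. The first term is simply $\Sob{\phi}{L^2(Q)}^2$, the middle term becomes $-2\phi_Q\cdot a(Q)\phi_Q=-2a(Q)\phi_Q^2$, and the last term becomes $a(Q)\phi_Q^2$.

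Combining the last two contributions gives $-2a(Q)\phi_Q^2+a(Q)\phi_Q^2=-a(Q)\phi_Q^2$, so that
	\begin{align}\notag
		\Sob{\phi-\phi_Q}{L^2(Q)}^2=\Sob{\phi}{L^2(Q)}^2-a(Q)\phi_Q^2,
	\end{align}
which is exactly the claimed identity. I do not anticipate any genuine obstacle here: the hypothesis $\phi\in L^2(Q)$ with $Q$ of finite area guarantees that $\phi\in L^1(Q)$ (so $\phi_Q$ is well-defined and finite) and that every integral above converges, and the computation is purely algebraic once the two substitutions are made. The only point requiring any care at all is keeping track that $\phi_Q$ is a scalar that pulls out of the integrals, which is precisely what makes the cross term collapse.
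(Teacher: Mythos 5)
Your proof is correct and is essentially identical to the paper's own argument: both expand $(\phi-\phi_Q)^2$ pointwise, use $\int_Q\phi\ dx=a(Q)\phi_Q$ and $\int_Q dx=a(Q)$, and collapse the cross term. Your added remark that $\phi\in L^2(Q)$ with $a(Q)<\infty$ ensures $\phi_Q$ is well-defined is a harmless extra precision the paper leaves implicit.
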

\begin{proof}
Simply observe that $(\phi-\phi_Q)^2=\phi^2-2\phi\phi_Q+\phi_Q^2$.  Thus
	\begin{align}\notag
		\Sob{\phi-\phi_Q}{L^2(Q)}^2=\int_Q\phi^2\ dx-2\phi_Q\int_Q\phi\ dx+a({Q})\phi_Q^2=\Sob{\phi}{L^2(Q)}^2-2a({Q})\phi_Q^2+a({Q})\phi_Q^2,
	\end{align}
as desired.
\end{proof}

\begin{proof}[Proof of Proposition \ref{prop:lp}]
Let $p\geq2$.  Upon multiplying \req{fb:ivp} by $\eta|\eta|^{p-2}$, integrating over $\T^2$, using the fact that $\int v\cdotp{\del}\eta\eta|\eta|^{p-2}\ dx=0$, then applying H\"older's inequality, Proposition \ref{lb}, and Young's inequality one arrives at
	\begin{align}\label{mp:1}
		\frac{1}p\frac{d}{dt}\Sob{\eta}{L^p}^p+\frac{2\kap}p\Sob{\Lam^{\gam/2}|\eta|^{p/2}}{L^2}^2&\leq \left(\Sob{f}{L^p}+{{{\mu}}}\Sob{J_h\eta}{L^p}+{{\mu}}\Sob{J_h\tht}{L^p}\right)\Sob{\eta}{L^p}^{p-1}\\
				&\leq \kap\frac{C^{p-1}(p-1)^{p-1}}{p}\frac{\mu^p}{\kap^p}\left( \frac{\kap^p}{\mu^p}F_{L^p}^p+\Sob{J_h\eta}{L^p}^p+\Sob{J_h\tht}{L^p}^p\right)+\frac{\kap}{2p}\Sob{\eta}{L^p}^p.\notag
	\end{align}

Observe that by \req{Ih:base0} we have $\Sob{J_h\tht}{L^p}\leq C\Sob{\tht}{L^p}\leq C\Tht_{L^p}$, which is finite by \req{tht:bounds}.  By \req{Ih:base1}, Proposition \ref{prop:a priori0}, and \req{eta:l2} we have
	\begin{align}\label{Ih:eta:lp}
		\Sob{J_h\eta}{L^p}\leq Ch^{2/p-1}\Sob{\eta}{L^2}\leq Ch^{2/p-1}M_{L^2}.
	\end{align}
From Corollary \ref{coro:poin} and Lemma \ref{lem:avg}, it follows that
	\begin{align}\label{poin:avg}
		\Sob{\eta}{L^p}^p-(4\pi^2)^{-1}\Sob{\eta}{L^{p/2}}^p=\Sob{|\eta|^{p/2}-(|\eta|^{p/2})_{\T^2}}{L^2}^2\leq C\Sob{\Lam^{\gam/2}|\eta|^{p/2}}{L^2}^2.
	\end{align}
By interpolation of $L^q$ spaces and Young's inequality, using the notation in \req{eta:lp} we have
	\begin{align}\label{p2:inter}
		\Sob{\eta}{L^{p/2}}^p\leq \Sob{\eta}{L^{p}}^{\frac{p(p-2)}{p-1}}M_{L^1}^{\frac{p}{p-1}}\leq C^p\left(\frac{p-2}{p-1}\right)^{p-2}M_{L^1}^{p}+\pi^2\Sob{\eta}{L^p}^p,
	\end{align}
for some absolute constant $C>0$.

Thus, upon collecting \req{Ih:eta:lp}, \req{poin:avg}, and \req{p2:inter}, we return to \req{mp:1}, and arrive at
	\begin{align}%\label{mp:2}
		\frac{1}p\frac{d}{dt}\Sob{\eta}{L^p}^p+\frac{C\kap}{p}\Sob{\eta}{L^p}^p\leq C^p{\kap}\frac{(p-1)^{p-1}}{p}\frac{\mu^p}{\kap^p}\left(R_{L^p}^p+h^{2-p}M_{L^2}^p\right)+\kap\frac{C^p}{p}M_{L^1}^p.\notag
	\end{align}
It then follows from Gronwall's inequality %,\req{l2:nosquare},
and \req{Rp} that
	\begin{align}\label{gron:lp}
		\Sob{\eta(t_2)}{L^p}^p\leq \Sob{\eta(t_1)}{L^p}^pe^{-C\kap(t_2-t_1)}+p^p\frac{{{{\mu^p}}}}{{{{\kap^p}}}}\til{R}_{L^p}^p(1-e^{-C{{{\kap}}}(t_2-t_1)}),
	\end{align}
as desired.
\end{proof}

\subsection{$L^p$ to $H^\s$ uniform bounds}\label{sect:hs1}
Let $\mu>0$, $\Tht_{H^\s}$ be given by \req{sqg:hs:ball}, and $M_{L^r}$ by \req{Mp}.  Define
	\begin{align}\label{R:sig}
		\begin{split}
		&F_{H^{\s-\gam/2}}:=\frac{1}{{\kap}}\Sob{f}{H^{\s-\gam/2}},\quad R_{H^\s}^2:=C\left(\frac{{{{\kap}}}}{{{{\mu}}}}F_{H^{\s-\gam/2}}^2+\Tht_{H^\s}^2\right),\quad \Xi_{r,\al}(t_1,t_2):=C\left(\frac{M_{L^r}(t_1,t_2)}{\kap}\right)^{\frac{2\al}{\gam-1-2/r}},
		\end{split}
	\end{align}
where $C>0$ is an absolute constant.  For $h>0$, define
	\begin{align}\label{t:de}
		\rho(h,\s,\gam)&:=
				\begin{cases} h^{2\s},&\s\leq\gam/2,\\
							h^\gam, &\s>\gam/2.
\end{cases}
	\end{align}

\begin{prop}\label{prop:hs:a priori}
There exist absolute constants $C_0, c_0>0$, with $c_0$ depending on $C_0$, such that  if \req{R:sig} holds, with $C=C_0$ and $h>0$ satisfies
	\begin{align}\label{tj:modes}
		\frac{\mu}{\kap}\rho(h,\s,\gam)\leq c_0,
	\end{align}
then when $\s\leq\gam/2$, we have that
	\begin{align}\label{hs:ineq}
		\Sob{\eta(t_2)}{H^\s}^2\leq \left[\Sob{\eta(t_1)}{H^\s}^2+\Xi_{p,\s}(t_1,t_2)\left(\Sob{\eta(t_1)}{L^2}^2-R_{L^2}^2\right)-R_{H^\s}^2\right]e^{-{{{\mu}}}(t_2-t_1)}+\Xi_{p,\s}(t_1,t_2)R_{L^2}^2+R_{H^\s}^2,\quad
	\end{align}
holds for all $0\leq t_1<t_2$ and
	\begin{align}\label{hsgam:ineq}
		{{\kap}}\int_0^t\Sob{\eta(s)}{H^{\s+\gam/2}}^2\ ds\leq \Sob{\eta_0}{H^\s}^2+\Xi_{p,\s}(0,t)\Sob{\eta_0}{L^2}^2+{\mu}t\left(R_{H^\s}^2+\Xi_{p,\s}(0,t)R_{L^2}^2\right),\quad  t>0.
	\end{align}
In particular, we have
	\begin{align}\label{Ms}
		\Sob{\eta(t)}{H^\s}^2\leq \Sob{\eta_0}{H^\s}^2+R_{H^\s}^2+\Xi_{p,\s}M_{L^2}^2:=M_{H^\s}^2,\quad t>0,
	\end{align}
where
	\begin{align}\label{xi:infty}
		\Xi_{r,\al}:=\Xi_{r,\al}(0,\infty).
	\end{align}

On the other hand, when $\s>\gam/2$, then there exist absolute constants $C_0, c_0>0$ such that if \req{R:sig} holds with $C=C_0$ and $h>0$ satisfies \req{tj:modes}, then
	\begin{align}\label{prop:a priori00}
		\Sob{\eta(t)}{H^\s}^2\leq\Sob{\eta_0}{H^\s}^2+C_0\left(F_{H^{\s-\gam/2}}^2+\frac{\mu}{h^{2\s}\kap}\left(M_{L^2}^2+\Tht_{L^2}^2\right)\right)+\Xi_{p,\s}M_{H^{\gam/2}}^2,\quad t>0.
	\end{align}
\end{prop}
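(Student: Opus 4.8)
The plan is to run an $H^\s$ energy estimate on \req{fb:ivp} by testing against $\Lam^{2\s}\eta$, after rewriting the nudging term as $-\mu J_h(\eta-\tht)=-\mu\eta+\mu(\eta-J_h\eta)+\mu J_h\tht$ so as to expose a coercive term $\mu\Sob{\Lam^\s\eta}{L^2}^2$. This yields
\[
\tfrac12\tfrac{d}{dt}\Sob{\Lam^\s\eta}{L^2}^2+\kap\Sob{\Lam^{\s+\gam/2}\eta}{L^2}^2+\mu\Sob{\Lam^\s\eta}{L^2}^2=I+II+III+IV,
\]
where $I=\langle\Lam^\s f,\Lam^\s\eta\rangle$ is the forcing, $II=-\langle\Lam^\s(v\cdotp\del\eta),\Lam^\s\eta\rangle$ the nonlinear term, $III=\mu\langle\eta-J_h\eta,\Lam^{2\s}\eta\rangle$ the self-nudging term, and $IV=\mu\langle J_h\tht,\Lam^{2\s}\eta\rangle$ the observation term. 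For the forcing I pair at the dissipation level, $I\le\tfrac1\kap\Sob{f}{\dot{H}^{\s-\gam/2}}^2+\tfrac\kap4\Sob{\Lam^{\s+\gam/2}\eta}{L^2}^2$, the first summand being $\kap F_{H^{\s-\gam/2}}^2$.

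The technical heart is $II$. Using that $v=\Ri^\perp\eta$ is divergence free I write $v\cdotp\del\eta=\del\cdotp(v\eta)$ and split $\Lam^{2\s}=\Lam^{\s-\gam/2}\Lam^{\s+\gam/2}$, so that $II\le\Sob{\Lam^{\s+1-\gam/2}(v\eta)}{L^2}\Sob{\Lam^{\s+\gam/2}\eta}{L^2}$. The fractional Leibniz rule (Proposition \ref{prod:rule}) with $1/2=1/p+1/q$, together with the boundedness of the Riesz transform on $L^p$ and $L^q$ (guaranteed since $2/p<\gam-1$ forces $2<p,q<\infty$), gives $\Sob{\Lam^{\s+1-\gam/2}(v\eta)}{L^2}\le C\Sob{\eta}{L^p}\Sob{\Lam^{\s+1-\gam/2}\eta}{L^q}$; the Sobolev embedding $\dot{H}^{2/p}\imb L^q$ then upgrades this to $C\Sob{\eta}{L^p}\Sob{\eta}{\dot{H}^{\sigma_1}}$ with $\sigma_1:=\s+1-\gam/2+2/p$. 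Hypothesis $(H3)$ guarantees $\gam/2<\sigma_1<\s+\gam/2$, so I interpolate $\Sob{\eta}{\dot{H}^{\sigma_1}}$ between $\dot{H}^{\gam/2}$ and $\dot{H}^{\s+\gam/2}$ and apply Young's inequality; the admissibility of all exponents (in particular that the power of $\Sob{\Lam^{\s+\gam/2}\eta}{L^2}$ stays below $2$) is again exactly $2/p<\gam-1$, and a short computation shows the residual power of $\Sob{\eta}{L^p}/\kap$ is precisely $2\s/(\gam-1-2/p)$. Hence $II\le\tfrac\kap4\Sob{\Lam^{\s+\gam/2}\eta}{L^2}^2+\kap\Xi_{p,\s}\Sob{\Lam^{\gam/2}\eta}{L^2}^2$, the crucial point being that the low factor is the $L^2$-dissipation norm $\dot{H}^{\gam/2}$, not $\dot{H}^\s$.

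For $\s\le\gam/2$ the nudging terms are controlled by the Type I/II approximation property at the $L^2$ level \req{TI}. Writing $J_h\tht=\tht-(\tht-J_h\tht)$, the main piece of $IV$ gives $C\mu\Tht_{H^\s}^2+\tfrac\mu{16}\Sob{\Lam^\s\eta}{L^2}^2$, while for both the correction in $IV$ and the term $III$ I use $\Sob{\phi-J_h\phi}{L^2}\le Ch^\s\Sob{\phi}{\dot{H}^\s}$ together with the embedding $\dot{H}^{2\s}\imb\dot{H}^{\s+\gam/2}$ (valid precisely because $2\s\le\s+\gam/2$) and Young's inequality; the smallness $\mu h^{2\s}/\kap\le c_0$ from \req{tj:modes} then absorbs the resulting terms into the coercive $\mu\Sob{\Lam^\s\eta}{L^2}^2$ and into $\mu\Tht_{H^\s}^2$. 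Collecting everything yields
\[
\tfrac{d}{dt}\Sob{\Lam^\s\eta}{L^2}^2+\mu\Sob{\Lam^\s\eta}{L^2}^2+\tfrac\kap2\Sob{\Lam^{\s+\gam/2}\eta}{L^2}^2\le\mu R_{H^\s}^2+2\kap\Xi_{p,\s}\Sob{\Lam^{\gam/2}\eta}{L^2}^2.
\]
Integrating with the factor $e^{\mu t}$ and substituting the already-established $L^2$ bound \req{m2:3} for the weighted time integral of $\Sob{\eta}{H^{\gam/2}}^2$ produces \req{hs:ineq}; integrating the differential inequality directly and invoking \req{l2:energy} gives \req{hsgam:ineq}, and \req{Ms} follows by setting $t_1=0$ and $\Sob{\eta}{\dot{H}^{\gam/2}}\le M_{L^2}$-type bounds.

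When $\s>\gam/2$ the embedding $\dot{H}^{2\s}\imb\dot{H}^{\s+\gam/2}$ fails and the $L^2$-level approximation property no longer suffices; this is the main obstacle. Here I instead pair the nudging terms through the dissipation, bounding $III,IV$ by $\mu\Sob{J_h\eta}{\dot{H}^{\s-\gam/2}}\Sob{\Lam^{\s+\gam/2}\eta}{L^2}$ and the analogous expression with $\tht$, and using the crude inverse estimate \req{Ih:base2}, $\Sob{J_h\phi}{\dot{H}^{\s-\gam/2}}\le Ch^{-(\s-\gam/2)}\Sob{\phi}{L^2}$, which is available for both interpolant types. Young's inequality then contributes $\tfrac{C\mu^2h^{\gam-2\s}}{\kap}(M_{L^2}^2+\Tht_{L^2}^2)$; since no $\mu$-coercivity survives this pairing, I extract damping at rate $\kap$ from the Poincar\'e-type bound $\Sob{\Lam^\s\eta}{L^2}\le\Sob{\Lam^{\s+\gam/2}\eta}{L^2}$, and Gronwall yields a uniform-in-time bound. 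The smallness $\mu h^\gam/\kap\le c_0$ (this is $\rho=h^\gam$ in this regime) lets me rewrite $\mu^2h^{\gam-2\s}/\kap^2$ as $\le c_0\,\mu/(\kap h^{2\s})$, and bounding the nonlinear remainder $\Sob{\Lam^{\gam/2}\eta}{L^2}^2$ by $M_{H^{\gam/2}}^2$ (obtained by first running the $\s\le\gam/2$ case at $\s=\gam/2$) gives \req{prop:a priori00}. The principal difficulties throughout are keeping the interpolation exponents in $II$ tightly bound to $(H3)$ and treating the nudging and observation terms uniformly across both interpolant types while producing the exact powers of $h$ recorded in $\rho$ and separating the two regimes at $\s=\gam/2$.
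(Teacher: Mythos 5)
Your proposal is correct and takes essentially the same route as the paper's proof: the same $\Lam^{2\s}$ energy estimate with the nudging term split to expose $\mu\Sob{\Lam^\s\eta}{L^2}^2$, the same divergence-form rewriting of the nonlinearity followed by the fractional Leibniz rule, Calder\'on--Zygmund bounds, the embedding $\dot{H}^{2/p}\imb L^q$, interpolation between $\dot{H}^{\gam/2}$ and $\dot{H}^{\s+\gam/2}$, and Young's inequality producing exactly $\Xi_{p,\s}$, the same Type I approximation bound \req{TI} with Poincar\'e for $\s\leq\gam/2$, the same pairing through the dissipation via the inverse estimate \req{Ih:base2} with a bootstrapped $H^{\gam/2}$ bound for $\s>\gam/2$, and the same closing via \req{m2:3} and \req{l2:energy}. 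The only deviations are cosmetic: your embedding ``$\dot{H}^{2\s}\imb\dot{H}^{\s+\gam/2}$'' is stated in the reversed direction but the inequality you actually use, $\Sob{\Lam^{2\s}\eta}{L^2}\leq\Sob{\Lam^{\s+\gam/2}\eta}{L^2}$ for mean-zero functions, is the correct Poincar\'e bound the paper invokes, and in the $\s>\gam/2$ case you pair the $J_h\tht$ term through the dissipation (slightly cleaner) where the paper partially pairs it at the $H^\s$ level.
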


\begin{proof}
Let $t\in[t_1,t_2]$.  Multiply \req{fb:ivp} by $\Lam^{2\s}\eta$ and integrate over $\T^2$ to obtain
	\begin{align}\label{hs:initial}
		\frac{1}2\frac{d}{dt}\Sob{\Lam^{\s}\eta}{L^2}^2&+{{{\kap}}}\Sob{\Lam^{\s+\gam/2}\eta}{L^2}^2+{{{\mu}}}\Sob{\Lam^{\s}\eta}{L^2}^2\notag\\
					&=-\int v\cdotp{\del}\eta\Lam^{2\s}\eta\ dx+\int f\Lam^{2\s}\eta\ dx+{{{\mu}}}\int (\eta-J_h\eta)\Lam^{2\s}\eta\ dx+{{{\mu}}}\int J_h\tht\Lam^{2\s}\eta\ dx\notag\\
					&=I + II + III + IV.
	\end{align}
We estimate $I, II$ first.  We will estimate $III, IV$ depending on whether $J_h$ is of Type I or II.

To estimate $I$, we first use the facts that $\del\cdotp(v\eta)=v\cdotp{\del}\eta$ and $\Ri\sim\del\Lam^{-1}$, i.e., operator with symbol $-i\xi/\abs{\xi}$, to rewrite $I$ with Parseval's identity as
	\begin{align}\label{setup1}
		I=\int \Lam^{\s-\gam/2}\del\cdotp(v\eta)\Lam^{\s+\gam/2}\eta\ dx=\int\Lam^{1+\s-\gam/2}\Ri\cdotp(v\eta)\Lam^{\s+\gam/2}\eta\ dx.
	\end{align}

Now, since $1<p<\infty$ satisfies $1-\s<2/p$, Sobolev embedding ensures $H^\s\imb L^p$.  Let $q>1$ be such that $1/p+1/q=1/2$.  By Proposition \ref{prod:rule}, the fact that $\Ri, \Ri^\perp$ commute with $\Lam$ and are Calder\'on-Zygmund operators, i.e., $\Sob{v}{L^r}\leq C\Sob{\eta}{L^r}$, for all $r\in(1,\infty)$, and since $H^{2/p}\imb L^q$, we may estimate $I$ as
	\begin{align}\label{est:I1}
		|I|&\leq \Sob{\Lam^{1+\s-\gam/2}(v\eta)}{L^2}\Sob{\Lam^{\s+\gam/2}\eta}{L^2}\notag\\
			&\leq  C\Sob{\Lam^{1+\s-\gam/2}\eta}{L^q}\Sob{\eta}{L^p}\Sob{\Lam^{\s+\gam/2}\eta}{L^2}\notag\\
			&\leq C\Sob{\Lam^{1+\s-\gam/2+2/p}\eta}{L^2}\Sob{\eta}{L^{p}}\Sob{\Lam^{\s+\gam/2}\eta}{L^2}.
	\end{align}
By interpolation (Proposition \ref{interpol}), we have
	\begin{align}\label{hs:interpolate}
		\Sob{\Lam^{1+\s-\gam/2+2/p}\eta}{L^2}\leq \Sob{\Lam^{\s+\gam/2}\eta}{L^2}^{\frac{\s-(\gam-1-2/p)}{\s}}\Sob{\Lam^{\gam/2}\eta}{L^2}^{\frac{\gam-1-2/p}\s}.
	\end{align}
Therefore, returning to \req{est:I1}, from \req{hs:interpolate} and Young's inequality,  we have
	\begin{align}
		|I|&\leq C\Sob{\Lam^{\s+\gam/2}\eta}{L^2}^{\frac{2\s-(\gam-1-2/p)}{\s}}\Sob{\Lam^{\gam/2}\eta}{L^2}^{\frac{\gam-1-2/p}\s}\Sob{\eta}{L^{p}}\notag\\
		&\leq \frac{{{{\kap}}}}8\Sob{\Lam^{\s+\gam/2}\eta}{L^2}^2+\frac{1}2\Xi_{p,\s}(t_1,t_2){{{\kap}}}\Sob{\Lam^{\gam/2}\eta}{L^2}^2,\quad t\in[t_1,t_2],\notag
	\end{align}
where $\Xi_{p,\s}(t_1,t_2)$ is given in \req{R:sig}.  Note that this quantity is finite due from Proposition \ref{prop:lp}.

For $II$, we apply the Plancherel relation, Cauchy-Schwarz inequality, and Young's inequality, to obtain
	\begin{align}\notag
		\begin{split}
		|II|&\leq \frac{1}{{{{\kap}}}}\Sob{\Lam^{\s-\gam/2}f}{L^2}^2+\frac{{{{\kap}}}}{4}\Sob{\Lam^{\s+\gam/2}\eta}{L^2}^2.
		\end{split}
	\end{align}

Now we estimate $III$ and $IV$.  We split the treatment of these terms into two cases: $\s\leq\gam/2$ and $\s>\gam/2$.  Note that the estimates hold for both Type I and II operators, although we will only make use of Type I properties.

\subsubsection*{Case: $\s\leq\gam/2$}
We estimate $III$ by applying the Cauchy-Schwarz inequality, the Poincar\'e inequality, \req{TI}, \req{t:de}, \req{tj:modes} (with $c_0$ sufficiently small), then applying Young's inequality we estimate
	\begin{align}\label{t1:III}
		|III|&\leq \mu\Sob{\eta-J_h\eta}{L^2}\Sob{\Lam^{2\s}\eta}{L^2}\leq C\mu h^\s\Sob{\Lam^\s\eta}{L^2}\Sob{\Lam^{\s+\gam/2}\eta}{L^2}\notag\\
		&\leq \frac{\kap}{16}\Sob{\Lam^{\s+\gam/2}\eta}{L^2}^2+\frac{\mu}{4}\Sob{\Lam^\s\eta}{L^2}^2.
	\end{align}
Note that we used the fact that $(H1)$ implies $\s<1$.  On the other hand, using \req{TI}, the Cauchy-Schwarz and Poincar\'e inequalities, \req{t:de},  and \req{tj:modes}, we similarly estimate $IV$ as
	\begin{align}
		|IV|&\leq\mu\Sob{\tht-J_h\tht}{L^2}\Sob{\Lam^{2\s}\eta}{L^2}+\mu\Sob{\Lam^\s\tht}{L^2}\Sob{\Lam^\s\eta}{L^2}\notag\\
			&\leq C\mu\frac{\mu h^{2\s}}{\kap}\Sob{\tht}{H^\s}^2+\frac{\kap}{16}\Sob{\Lam^{\s+\gam/2}\eta}{L^2}^2+C{\mu}\Sob{\Lam^\s\tht}{L^2}^2+\frac{\mu}4\Sob{\Lam^\s\eta}{L^2}^2\notag\\
			&\leq C\mu\Tht_\s^2+\frac{\kap}{16}\Sob{\Lam^{\s+\gam/2}\eta}{L^2}^2+\frac{\mu}4\Sob{\Lam^\s\eta}{L^2}^2.\notag
	\end{align}
Upon combining $I-IV$, we arrive at
	\begin{align}\label{hs:noavg1}
		\frac{d}{dt}\Sob{\eta}{H^\s}^2+{{{\mu}}} \Sob{\eta}{H^\s}^2+{{{{\kap}}}}\Sob{\eta}{H^{\s+\gam/2}}^2&\leq 2{{{\kap}}}F_{H^{\s-\gam/2}}^2+2{{{\mu}}} C\Tht_{H^\s}^2+\Xi_{p,\s}(t_1,t_2){{{\kap}}}\Sob{\eta}{H^{\gam/2}}^2.
	\end{align}
Thus, from Gronwall's inequality applied over $[t_1, t_2]$ we obtain
	\begin{align}\notag
		\Sob{\eta(t_2)}{H^\s}^2\leq \Sob{\eta(t_1)}{H^\s}^2e^{-{{{\mu}}}(t_2-t_1)}+2R_{H^\s}^2(1-e^{-{{{\mu}}}(t_2-t_1)})+\Xi_{p,\s}(t_1,t_2)\left({{{{\kap}}}}\int_{t_1}^{t_2}e^{-{{{\mu}}}(t_2-s)}\Sob{\eta(s)}{H^{\gam/2}}^2\ ds\right).
	\end{align}
We may then apply Proposition \ref{prop:a priori0} to bound the last term above and obtain \req{hs:ineq}.

On the other hand, letting $t_1=0$ and $t_2=t$, then integrating \req{hs:noavg1} over $[0,t]$ gives
	\begin{align}\notag
		{{\kap}}\int_0^t\Sob{\eta(s)}{H^{\s+\gam/2}}^2\ ds\leq\Sob{\eta_0}{H^\s}^2+2t{\mu}R_{H^\s}^2+\Xi_{p,\s}(0,t)\left({{\kap}}\int_0^t\Sob{\eta(s)}{H^{\gam/2}}^2\ ds\right),
	\end{align}
and we again use Proposition \ref{prop:a priori0} to bound the last term and obtain \req{hsgam:ineq}.  This establishes the case $\s\leq\gam/2$.

\subsubsection*{Case: $\s>\gam/2$}
Since $\Sob{\eta_0}{H^{\gam/2}}\lesssim\Sob{\eta_0}{H^\s}$, it follows by estimating exactly as above that $\eta$ is uniformly bounded in $H^{\gam/2}$, provided that \req{tj:modes} holds with $\rho=h^{\gam}$. To obtain uniform estimates in $H^\s$, we do not insert a damping term, so that we replace \req{hs:initial} by
	\begin{align}\label{hs:initial2}
		\frac{1}2\frac{d}{dt}\Sob{\Lam^{\s}\eta}{L^2}^2+{{{\kap}}}\Sob{\Lam^{\s+\gam/2}\eta}{L^2}^2=I + II + III' + IV,
	\end{align}
where $I, II, IV$ are as before and $III'$ is given by
%the the left-hand side of \req{hs:initial} does not have the term $\mu\Sob{\eta}{H^\s}^2$, and $III$ is replaced by
	\begin{align}\notag
		III'=-\mu\int J_h\eta \Lam^{2\s}\eta\ dx.
	\end{align}
We treat $I, II$ the same.  To estimate $III', IV$, we apply Plancherel's theorem, the Cauchy-Schwarz inequality, \req{Ih:base2}, the Poincar\'e inequality, and Young's inequality to obtain
	\begin{align}\notag
		|III'|&\leq\mu\Sob{J_h\eta}{H^{\s-\gam/2}}\Sob{\eta}{H^{\s+\gam/2}}\notag\\
			&\leq C\mu h^{-(\s-\gam/2)}\Sob{\eta}{L^2}\Sob{\eta}{H^{\s+\gam/2}}\notag\\
			&\leq C\frac{\mu^2}{h^{2\s-\gam}\kap}\Sob{\eta}{L^2}^2+\frac{\kap}{8}\Sob{\eta}{H^{\s+\gam/2}}^2\notag\\
		|IV|&\leq C\frac{\mu}{h^{2\s}}\Sob{\tht}{L^2}^2+\frac{\mu}{2}\Sob{\eta}{H^{\s}}^2.\notag	\end{align}

Then combining $I-IV$, we apply the Poincar\'e's inequality to arrive at
	\begin{align}\notag
		\frac{d}{dt}\Sob{\eta}{H^\s}^2+{{{{\kap}}}}\Sob{\eta}{H^{\s}}^2&\leq 2{{{\kap}}}F_{\s,\gam}^2+\Xi_{p,\s}(t_1,t_2){{{\kap}}}\Sob{\eta}{H^{\gam/2}}^2+C\frac{\mu}{h^{2\s}}\left(\frac{\mu h^\gam}{\kap}\Sob{\eta}{L^2}^2+\Tht_{L^2}^2\right).
	\end{align}
Then by \req{tj:modes}, Gronwall's inequality, Propositions \ref{prop:a priori0} and \ref{prop:lp}, we obtain \req{prop:a priori00}.
\end{proof}

\begin{rmk}\label{hs:modes:case}
We observe that in the case that $J_h$ is Type II, we need not treat the cases $\s\leq\gam/2$ and $\s>\gam/2$ separately.  Indeed, for any $\s>2-\gam$, observe that by the Cauchy-Schwarz inequality, \req{TII}, Young's inequality, \req{t:de}, \req{tj:modes}, and $c_0$ sufficiently small we may estimate
	\begin{align}\label{t2:III}
		|III|&\leq \mu\Sob{\Lam^\s(\eta-J_h\eta)}{L^2}\Sob{\Lam^\s\eta}{L^2}\notag\\
			&\leq\frac{{{{\kap}}}}8\Sob{\Lam^{\s+\gam/2}\eta}{L^2}^2+C\frac{h^\gam{{{\mu}}}^2}{{{{\kap}}}} \Sob{\Lam^{\s}\eta}{L^2}^2\leq\frac{{{{\kap}}}}8\Sob{\Lam^{\s+\gam/2}\eta}{L^2}^2+\frac{\mu}4\Sob{\Lam^{\s}\eta}{L^2}^2.
	\end{align}
Using \req{Ih:base0}, \req{TII}, and since $\tht_0\in\mathcal{B}_{H^\s}$, we estimate $IV$ as
	\begin{align}
		|IV|&\leq \mu\Sob{J_h\Lam^\s\tht}{L^2}\Sob{\Lam^\s\eta}{L^2}\leq C{{{\mu}}}\Tht_{H^\s}^2+\frac{{{{\mu}}}}{4}\Sob{\Lam^{\s}\eta}{L^2}^2.\notag
	\end{align}
Thus, we may combine these estimates with those for $I,II$ and apply Gronwall's inequality to arrive at \req{hsgam:ineq}.
\end{rmk}

\section{Proofs of Main Theorems}\label{sect:pf1}

For the proofs of both Theorems \ref{thm1} and \ref{synch}, we assume the Standing Hypotheses, $(H1)-(H8)$.  Let $F_{L^2}, F_{H^{-\gam/2}}$, $F_{L^p}$, $F_{\s-\gam/2}$, $\Tht_{L^2}, \Tht_{L^p}, \Tht_{H^\s}$, and $R_{L^2}, R_{L^2}'$, $R_{L^p}, R_{L^p}'$, $R_{H^\s}, \Xi_{r,\al}$ be defined as in Propositions \ref{prop:sqg:ball}, \ref{prop:a priori0}, \ref{prop:lp}, \ref{prop:hs:a priori}, and Proposition \ref{prop:ga}.
Let $C_0'$ be the maximum among the constants, $C_0$, appearing in Propositions \ref{prop:a priori0}, \ref{prop:lp}, \ref{prop:hs:a priori}, and let $c_0'$ be the minimum among the constants, $c_0$, appearing there.
We assume that $c_0'\ll1$.   Suppose \req{num:modes} holds with $\rho$ given by \req{t:de}.
%	\begin{align}\label{t:de:thm}
%		\rho(h,\s,\gam)&:=
%				\begin{cases} h^{2\s},& \s\leq\gam/2,\\
%							h^\gam, &\s>\gam/2.
%		\end{cases}
%	\end{align}	
This implies that $\mu, h$ satisfy
	\begin{align}\label{main:mu}
		\frac{\mu}{\kap}\max\{\rho(h,\s,\gam),h^\gam\}\leq c_0'.
	\end{align}

We first prove Theorem \ref{thm1}, i.e., the short-time existence of strong solutions and establish uniqueness within this class of solutions.  We will then prove Theorem \ref{synch}, which establishes the synchronization property of the algorithm.

\subsection{Proof of Theorem \ref{thm1}}%\label{sect:exist:hs1

\subsubsection*{Existence of strong solutions}
If $\eta_0, \tht_0\in L_{per}^2(\T^2)\cap\mathcal{Z}$ and $f\in V_{-\gam/2}$, then the assumption \req{main:mu}, and the a priori bounds of Propositions \ref{prop:sqg:ball}, \ref{prop:a priori0} guarantee the existence of weak solutions (cf. \cite{resnick}).  To show that strong solutions exist, i.e., the solutions to \req{fb:ivp} belong to $V_\s$, $\s>2-\gam$, provided that $\eta_0\in V_\s$, $\tht_0\in\mathcal{B}_\s$, and $f\in V_{\s-\gam/2}\cap L_{per}^p(\T^2)$, where $1-\s<2/p<\gam-1$, we need only establish a priori estimates.  Indeed, by adding an artificial viscosity $-\nu\De$ to \req{fb:ivp} and mollifying $f$ by $f^{(\nu)}$, we have global smooth solutions $\eta^{(\nu)}$ (\textit{independent} of $\gam$) such that the weak limit $\eta^{(\nu)}\goesto\eta$, as $\nu\goesto0^+$, is a weak solution of \req{fb:ivp} (cf. \cite{const:wu:qgwksol, ctv}).  Since \req{main:mu} holds, the family satisfies precisely the same estimates performed above in establishing Propositions \ref{prop:a priori0}, \ref{prop:lp}, and \ref{prop:hs:a priori}.
  Consequently, these bounds are inherited in the limit, thus ensuring that $\eta$ is a strong solution.

\subsubsection*{Uniqueness of strong solutions}%\label{sect:uniq:hs1}
Let $\eta_1, \eta_2$ be strong solutions of \req{fb:ivp} with initial data $\eta_{0}^{(1)}, \eta_{0}^{(2)}\in V_\s$, $\s>2-\gam$, respectively, and $\tht$ the strong solution of \req{sqg} with initial data $\tht_0\in \mathcal{B}_\s$.  Let $\z:=\eta_1-\eta_2$.  Then the evolution of $\z$ is given by
	\begin{align}\label{z:eqn}
		\begin{cases}
			&\bdy_t\z+{{{\kap}}}\Lam^\gam\z+(\Ri^\perp\z)\cdotp{\del}\z+(\Ri^\perp\z)\cdotp{\del} \eta_2+v_2\cdotp{\del}\z=-{{{\mu}}} J_h\z,\quad x\in \T^2, t>0,\\
			&\z(x,0)=\eta^{(1)}_0(x)-\eta^{(2)}_0(x),\quad v_1=\Ri^\perp \eta_1, \quad v_2=\Ri^\perp\eta_2.
		\end{cases}
	\end{align}
We multiply by $\psi:=-\Lam^{-1}\z$ and integrate to obtain:
	\begin{align}\notag
		\frac{1}2\frac{d}{dt}\Sob{\psi}{{H}^{1/2}}^2+&{{{\kap}}}\Sob{\psi}{H^{(\gam+1)/2}}^2+{{{\mu}}}\Sob{\psi}{{H}^{1/2}}^2=\underbrace{\int v_2\cdotp{\del}\z\psi\ dx}_I+\underbrace{{{{\mu}}} \int (\z-J_h\z)\psi\ dx}_{II}.
	\end{align}
Note that we have used the orthogonality property $\int (\Ri^\perp f)\cdotp \del g\Lam^{-1}f\ dx=0$.

We estimate $I$ as follows.  First observe that by integrating by parts and using the relation $\psi=-\Lam^{-1}\z$, we may write
	\begin{align}\notag
		I=\int v_2\cdotp{\del}\z\psi\ dx=\int (v_2\Lam\psi)\cdotp{\del}\psi\ dx.
	\end{align}
Thus, (as in p. 32 of \cite{resnick}), we may apply H\"older's inequality, the Calder\`on-Zygmund theorem, and the Sobolev embedding $H^{1/p}\imb L^q$ to obtain
	\begin{align}\label{z:nlt:1}
		\left|I\right|\leq Cp\Sob{\eta_2}{L^p}\Sob{\z}{L^{q}}\Sob{{\del}\psi}{L^q}\leq Cp\Sob{\eta_2}{L^p}\Sob{\psi}{H^{1+1/p}}^2,
	\end{align}
where $1/p+2/q=1$. Since $p>2/(\gam-1)$ from $(H3)$, by interpolation we have
	\begin{align}\notag
		\Sob{\psi}{H^{1+1/p}}\leq C\Sob{\psi}{H^{(\gam+1)/2}}^{\frac{1+2/p}{\gam}}\Sob{\psi}{{H}^{1/2}}^{\frac{\gam-1-2/p}{\gam}}.
	\end{align}
It follows that
	\begin{align}\notag
		\left| I\right|\leq \frac{{{{\kap}}}}{4}\Sob{\psi}{H^{(\gam+1)/2}}^2+\frac{{\kap}}2p^{\frac{\gam}{\gam-1-2/p}}\Xi_{p,\gam/2}{}\Sob{\psi}{H^{1/2}}^2.
	\end{align}
where $\Xi_{p,\gam/2}$ is defined by \req{R:sig} and \req{xi:infty}, except in terms of $\Sob{\eta_{2}}{L^p}$ and with $C=C_0'$ there.  Note that we will suppress the dependence of the constant on $p$.  Also note that $\Xi_{p,\gam/2}<\infty$ is guaranteed by Proposition \ref{prop:lp} since ${{\mu}}$ satisfies \req{main:mu}.% i.e.,

The estimate of $II$ is, in fact, independent of the Type of $J_h$.  Indeed, if $J_h$ is Type II, then $J_h$ also satisfies \req{TI}.

So observe that by \req{TI}, the Cauchy-Schwarz and Young's inequalities, \req{main:mu}, and by interpolation (Proposition \ref{interpol}) we have
	\begin{align}\notag
		|II|&\leq\mu\Sob{\z-J_h\z}{H^{-\gam/2}}\Sob{\psi}{H^{\gam/2}}\notag\\
			&\leq C\mu h^{\gam/2}\Sob{\psi}{H^{1}}\Sob{\psi}{H^{\gam/2}}\notag\\
			&\leq C\mu h^{\gam/2}\Sob{\psi}{H^{(\gam+1)/2}}\Sob{\psi}{H^{1/2}}\notag\\
			&\leq C\frac{\mu^2h^\gam}{\kap}\Sob{\psi}{H^{1/2}}^2+\frac{\kap}4\Sob{\psi}{H^{(\gam+1)/2}}^2.\notag
	\end{align}

Thus, upon combining $I, II$, and \req{num:modes}, we may deduce
	\begin{align}\label{uniq:1}
		\frac{d}{dt}\Sob{\psi}{{H}^{1/2}}^2+{{{\kap}}}\Sob{\psi}{H^{(\gam+1)/2}}^2+\frac{3}2{{{\mu}}}\Sob{\psi}{{H}^{1/2}}^2\leq{{{\kap}}} {\Xi}_{p,\gam/2}\Sob{\psi}{{H}^{1/2}}^2.
	\end{align}
By Gronwall's inequality, we have
	\begin{align}
		\Sob{\psi(t)}{H^{1/2}}^2\leq \Sob{\psi_0}{H^{1/2}}^2e^{(\kap\Xi_{p,\gam/2}-(3/2)\mu)t}.
	\end{align}
which establishes continuous dependence on initial conditions for \req{fb:ivp} in the topology $H^{-1/2}$ (since $\psi=-\Lam^{-1}\z$).  In fact, by interpolation (see \req{interpolate:1}), we may establish continuous dependence in $H^{\s-\eps}$ for all $\eps\in(0,\s+1/2)$.  In particular, if $\eta_0^{(1)}=\eta_0^{(2)}$, then $\z_0\equiv0$ and \req{uniq:1} implies $\z\equiv0$, which establishes uniqueness of solutions.

This completes the proof of Theorem \ref{thm1}.\qed

\subsection{Proof of Theorem \ref{synch}}\label{sect:synch:1}
Let $\eta$ be the unique global strong solution of \req{fb:ivp} with initial data $\eta_0\in V_\s$, $\s>2-\gam$, and $\tht$ be the unique global strong solution of \req{sqg} with initial data $\tht_0\in \mathcal{B}_\s$.  We assume \req{mu:assumpt}, which is
	\begin{align}\label{synch:cond}
		 \mu\geq c_0'{\kap}\left(\frac{{\TLp}}{\kap}\right)^{\gam/(\gam-1-2/p)},
	\end{align}
where $c_0'$ is the constant from \req{main:mu} (whose magnitude is determined below).  Let $\z:=\eta-\tht$ and $\psi:=-\Lam^{-1}\z$.  Then, proceeding as in the proof of uniqueness from the previous section, we arrive at
	\begin{align}\label{synch:bal1}
		\frac{d}{dt}\Sob{\psi}{{H}^{1/2}}^2+{{{\mu}}}\left(\frac{3}2-{\Xi}_{p,\gam/2}(\tht)\frac{{{{\kap}}}}{{{{\mu}}}}\right)\Sob{\psi}{{H}^{1/2}}^2\leq0,
	\end{align}
where $\Xi_{p,\gam/2}(\tht)$ is defined by \req{R:sig} and \req{xi:infty}, except with $\Tht_{L^p}$ replacing $M_{L^p}$, i.e.,
	\begin{align}\label{synch:xi}
		\Xi_{p,\gam/2}(\tht):=C\left(\frac{\TLp}{\kap}\right)^{\frac{\gam}{\gam-1-2/p}},
	\end{align}
for some absolute constant $C>0$.
Note that $\Xi_{p,\gam/2}(\tht)$ is \textit{independent} of $h,{{\mu}}$.  Therefore, by \req{synch:cond} with $c_0'$ \textit{chosen sufficiently small}, it follows that
	\begin{align}\notag
		\Sob{\psi}{H^{1/2}}^2\leq\Sob{\psi_0}{H^{1/2}}^2e^{-{{{{\mu}}}} t}.
	\end{align}

To upgrade the convergence, we need only interpolate since Proposition \ref{prop:hs:a priori} ensures that $\eta$ satisfies uniform bounds in $H^\s$.  Indeed, let
	\begin{align}\notag
		M_{H^\s}:=\sup_{t\geq0}\Sob{\eta(t)}{H^\s}.
	\end{align}
 Observe that for any $\s>2-\gam$ and $0<\eps<\s+1/2$, we have
	\begin{align}\label{interpolate:1}
		\Sob{\psi(t)}{H^{\s+1-\eps}}\leq C\Sob{\psi(t)}{H^{\s+1}}^{\frac{\s+1/2-\eps}{\s+1/2}}\Sob{\psi(t)}{H^{1/2}}^{\frac{\eps}{\s+1/2}}.
	\end{align}
Thus
	\begin{align}\notag
		\Sob{\eta(t)-\tht(t)}{H^{\s-\eps}}^2\leq {O_\eps(e^{-\mu_\eps t})},
	\end{align}
where $\mu_\eps=\eps{{{\mu}}}/(\s+1/2)$, and
	\begin{align}\label{O:eps1}
		O_\eps(e^{-\mu_\eps t}):=C\left(M_{H^\s}+\Tht_{H^\s}\right)^{\frac{2(\s+1/2-\eps)}{\s+1/2}}\Sob{\psi_0}{H^{1/2}}^{\frac{2\eps}{\s+1/2}}e^{-{\mu_\eps} t}.
	\end{align}
In particular, this holds for $\eps=\s$, so that
	\begin{align}\label{l2:sync}
		\Sob{\eta(t)-\tht(t)}{L^2}^2\leq O_\s(e^{-c_\s{{\mu}}t}),
	\end{align}
where $c_\s=\s/(\s+1/2)$.  This establishes \req{synch:high1} upon rescaling,  concluding the proof of Theorem \ref{synch}.\qed

\begin{rmk}\label{rmk:logmu}
Note that even though $M_{H^\s}$ depends on $h, {{\mu}}$, it will only affect the exponential rate, $c_\s{{\mu}}$, up to a fixed, multiplicative factor.  Thus, the synchronization still occurs at an exponential rate.
\end{rmk}

\appendix

\section{}\label{app}

In this section, we verify that volume element (see \req{vol:elts}, \req{vol:elts:shift} below) and modal projection interpolants (see \req{rough:proj}, \req{smooth:proj}) are of Type I and II, respectively.  For convenience, we let $\Om=\T^2$ denote the $2\pi$-periodic box throughout, where $\T^2=(\R/(2\pi\Z))^2$, so that $\T^2=[-\pi,\pi]^2$.  Let $\mathcal{Z}$ and $V_\be$ be the spaces defined by \req{Z:space} and \req{hper}, respectively.  The main claim is the following:

\begin{prop}\label{thm:Jh}
Suppose that $J_h$ is a linear operator that is defined by either \req{vol:elts}, \req{vol:elts:shift}, \req{modes}, \req{smooth:proj}, below.  Then
	\begin{enumerate}[(Property 0.1)]
	\item $\sup_{h>0}\Sob{J_h\phi}{L^p}\ls\Sob{\phi}{L^p}$, for any $p\in(1,\infty)$, $\phi\in {L}_{per}^p(\T^2)$;
	\item $\Sob{J_h\phi}{L^p}\ls h^{2/p-1}\Sob{\phi}{L^2}$, for any $p\in[2,\infty)$, $\phi\in {L}_{per}^2(\T^2)$;
	\item $\Sob{J_h\phi}{\dot{H}^\be}\ls h^{-\be}\Sob{\phi}{L^2}$, for any $\be\geq0$, $\phi\in {L}_{per}^2(\T^2)$.
	\end{enumerate}
If $J_h$ is defined by \req{vol:elts}, \req{vol:elts:shift}, then for any $\be\in(0,1]$ we have
	\begin{enumerate}[(Property 1.1)]
		\item $\Sob{\phi-{J}_h\phi}{L^2}\lesssim h^{\be}\Sob{\phi}{\dot{H}^\be}$, for any $\phi\in\dot{H}_{per}^\be(\T^2)$\ ($V_\be$ if $J_h$ is \req{vol:elts:shift});
		\item[(Property 1.2)] $\Sob{\phi-{J}_h\phi}{\dot{H}^{-\be}}\lesssim h^{\be}\Sob{\phi}{L^2}$, for any $\phi\in{L}_{per}^2(\T^2)$\ ($L^2_{per}(\T^2)\cap\mathcal{Z}$ if $J_h$ is \req{vol:elts:shift}).
	\end{enumerate}
 If $J_h$ is defined by \req{rough:proj} or \req{smooth:proj}, then for any $\phi\in\dot{H}^\be_{per}(\T^2)$ we have
	\begin{enumerate}[(Property 2.1)]
		\item $\Sob{\phi-J_h\phi}{\dot{H}^\al}\ls h^{\be-\al}\Sob{\phi}{\dot{H}^\be}$,  for any $\phi\in\dot{H}^\be_{per}(\T^2), \be>\al$;
		\item $\Lam^\be J_h\phi=J_h\Lam^\be\phi$, for any $\phi\in\dot{H}^\be_{per}(\T^2), \be\in\R$.
	\end{enumerate}
Moreover, when $J_h$ is given by \req{vol:elts}, \req{vol:elts:shift}, or \req{smooth:proj}, then (Property 0.1) and (Property 0.2) are valid for $p=1,\infty$ and $p=\infty$, respectively.
\end{prop}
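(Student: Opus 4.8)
The plan is to treat the two families separately, since the modal interpolants \req{rough:proj}, \req{smooth:proj} are diagonalized by the Fourier transform while the volume-element interpolants \req{vol:elts}, \req{vol:elts:shift} are built from averages over cubes of side length comparable to $h$; in both cases I organize everything around the single length scale $h$, which simultaneously governs the inverse (Bernstein-type) estimates and the approximation estimates.

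For the modal interpolants, write $N\sim1/h$ for the cutoff. Properties 0.2, 0.3, 2.1 and 2.2 are then immediate from Plancherel's theorem: the commutation (Property 2.2) holds because $\Lam^\be$ and the frequency projection are both Fourier multipliers; (Property 2.1) follows by inserting $\abs{\mathbf{k}}^{2(\al-\be)}\leq N^{2(\al-\be)}$ on the tail $\abs{\mathbf{k}}>N$; (Property 0.3) by inserting $\abs{\mathbf{k}}^{2\be}\leq N^{2\be}$ on the retained block; and (Property 0.2) is trivial at $p=2$, while $p>2$ (and $p=\infty$ for \req{smooth:proj}) follows from Bernstein's inequality since the output is spectrally localized to $\abs{\mathbf{k}}\ls1/h$. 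The one point requiring care is (Property 0.1) for $p\neq2$. For \req{smooth:proj} this is a H\"ormander--Mikhlin estimate: its convolution kernel is an approximate identity with $L^1$-norm bounded uniformly in $h$, so Young's inequality even yields the endpoints $p=1,\infty$. For \req{rough:proj} I would transfer to $\R^2$ and use the tensor structure of the cutoff to reduce to the $L^p$-boundedness of the one-dimensional partial-sum operator, which holds precisely for $1<p<\infty$ and degenerates logarithmically at $p=1,\infty$, explaining why the last sentence of the proposition excludes \req{rough:proj} there.

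For the volume-element interpolants the boundedness properties are elementary. (Property 0.1) is Jensen's inequality: averaging over each cube is an $L^p$-contraction, valid for all $1\leq p\leq\infty$, which gives the endpoint claim. (Property 0.2) follows by estimating each cube average by Cauchy--Schwarz, $\abs{\phi_Q}\ls h^{-1}\Sob{\phi}{L^2(Q)}$, and then summing the local contributions via $\ell^2\imb\ell^p$ for $p\geq2$; the same bound at $p=\infty$ supplies that endpoint. The inverse estimate (Property 0.3) is the first place the single-scale heuristic must be made rigorous: since $J_h\phi$ is resolved only at scale $h$, I expect $\Sob{J_h\phi}{\dot{H}^\be}\ls h^{-\be}\Sob{J_h\phi}{L^2}\leq h^{-\be}\Sob{\phi}{L^2}$, and I would prove the inverse inequality by a Littlewood--Paley decomposition, controlling the high-frequency blocks $\triangle_k J_h\phi$ with $2^k\gg1/h$ by the smoothness and bounded overlap of the underlying partition of unity. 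This is exactly what rules out the sharp-step-function obstruction that would otherwise make $\dot{H}^\be$ infinite for $\be\geq1/2$.

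The heart of the matter, and the step I expect to be the main obstacle, is the pair of approximation-of-identity estimates (Property 1.1) and (Property 1.2) across the whole fractional range $\be\in(0,1)$, where the nonlocality of $\Lam^\be$ must be tamed. The route I would take is the modulus-of-continuity (Gagliardo) characterization $\Sob{\phi}{\dot{H}^\be}^2\sim\int\!\!\int\abs{\phi(x)-\phi(y)}^2\abs{x-y}^{-2-2\be}\,dx\,dy$, combined with the local fractional Poincar\'e inequality of Corollary \ref{coro:poin} and the average identity of Lemma \ref{lem:avg}. On a single cube $Q$ of side $h$, the identity $\Sob{\phi-\phi_Q}{L^2(Q)}^2=\tfrac{1}{2a(Q)}\int_Q\!\int_Q\abs{\phi(x)-\phi(y)}^2$ together with $\abs{x-y}\ls h$ on $Q\times Q$ yields $\Sob{\phi-\phi_Q}{L^2(Q)}^2\ls h^{2\be}\int_Q\!\int_Q\abs{\phi(x)-\phi(y)}^2\abs{x-y}^{-2-2\be}$; summing over the cubes (with bounded overlap for the smoothed partition) and discarding cross terms gives (Property 1.1). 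For the negative-order estimate (Property 1.2) I would argue by duality: pairing $\phi-J_h\phi$ against $\psi\in\dot{H}^\be$ and moving the operator onto $\psi$ using the self-adjointness of the cube average (or the analogous approximation property of its adjoint), one gets $\langle\phi-J_h\phi,\psi\rangle\leq\Sob{\phi}{L^2}\Sob{\psi-J_h\psi}{L^2}\ls h^\be\Sob{\phi}{L^2}\Sob{\psi}{\dot{H}^\be}$, and taking the supremum over $\Sob{\psi}{\dot{H}^\be}\leq1$ closes the estimate. The shifted variant \req{vol:elts:shift} then requires tracking the subtracted global mean, which is why its statements are restricted to $V_\be$ and $L^2_{per}\cap\mathcal{Z}$; I would absorb that correction through the Poincar\'e inequality so that it costs only a constant multiple of the same right-hand sides. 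The principal difficulty is thus uniformity in $\be$: the modulus-of-continuity identity handles $\be\in(0,1)$ cleanly, but reconciling it with the Littlewood--Paley inverse estimate needed for arbitrarily large $\be$ in (Property 0.3) is precisely what forces the simultaneous use of all three tools advertised in the abstract.
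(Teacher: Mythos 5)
Your proposal is correct, and for most of the proposition it follows the same route as the paper: classical Dirichlet-kernel and Bernstein estimates for the modal projections (with the same explanation of why \req{rough:proj} is excluded from the endpoint cases $p=1,\infty$), bounded overlap plus H\"older/Cauchy--Schwarz for Properties 0.1 and 0.2 of the volume elements, the modulus-of-continuity characterization plus a local fractional Poincar\'e inequality for Property 1.1, and duality for Property 1.2 --- your ``self-adjointness of the cube average'' is exactly the paper's identity \req{calIh:dual}, $\lb\phi-\mathcal{I}_h\phi,g\rb=\lb\phi,g-\mathcal{I}_hg\rb$. Two sub-arguments genuinely differ. First, for the one-cube Poincar\'e inequality the paper imports the fractional Poincar\'e inequality of Hurri-Syrj\"anen--V\"ah\"akangas (Lemma \ref{lem:fp}) and combines it with Proposition \ref{lem:ben:oh} and a rescaling to get Corollary \ref{coro:poin}; you instead derive the same one-cube bound elementarily from the identity of Lemma \ref{lem:avg} together with the trivial estimate $\abs{x-y}\ls h$ on $Q\times Q$, i.e.\ $\Sob{\phi-\phi_Q}{L^2(Q)}^2=\tfrac{1}{2a(Q)}\int_Q\int_Q\abs{\phi(x)-\phi(y)}^2\,dx\,dy\ls h^{2\be}\int_Q\int_Q\abs{\phi(x)-\phi(y)}^2\abs{x-y}^{-2-2\be}\,dx\,dy$. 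This is simpler, self-contained, and entirely sufficient for the upper-bound direction needed here (the restricted integration region in Lemma \ref{lem:fp} matters only for finer statements). Second, for Property 0.3 the paper bounds $\Sob{\til{\psi}_\al}{\dot{H}^\be}\ls h^{1-\be}$ for all $\be\geq0$ directly (Proposition \ref{prop:pou}(iv), via the Sobolev-norm characterization of Remark \ref{rmk:sob:char} and the scaling of derivatives of the mollified indicators), then sums using bounded overlap and Cauchy--Schwarz; you propose a Littlewood--Paley reverse-Bernstein argument on the range of $J_h$. That can be made to work, since the bumps $\til{\psi}_\al$ have rapid Fourier decay beyond frequency $1/h$, but it requires summing over $\al$ inside each dyadic block and is heavier machinery than the paper's direct bump-function estimate. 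Your diagnosis that sharp indicator functions would fail for $\be\geq1/2$, and that smoothing the partition of unity is what rescues the estimate, is exactly the point.

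One step you gloss over: the interpolants \req{vol:elts}, \req{vol:elts:shift} are built from the weighted averages $\til{\phi}_{Q_\al}$, not the plain averages $\phi_{Q_\al}$, so your one-cube estimate must be transferred from $\phi_{Q_\al}$ to $\til{\phi}_{Q_\al}$ before summing; this is the content of Corollary \ref{coro:poin2}, which compares the two averages by H\"older and absorbs the difference using \req{equiv:cubes}. The same transfer is needed in the duality step for Property 1.2, where the paper works locally with $(g-\til{g}_{Q_\al})\til{\psi}_\al$ rather than globally with $g-\mathcal{I}_hg$. These are routine repairs, not structural flaws, so your argument stands.
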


We then define Type I operators as any linear operator, $J_h$, that satisfy Property (0.1)-(0.3) \text{and} Property (1.1), (1.2), while Type II operators are those that satisfy Property (0.1)-(0.3) \text{and} Property (1.1), (1.2).

\subsection{Local averages (Type I)}
Let us recall the following construction of a partition of unity from \cite{azouani:olson:titi}.  Let $N>0$ be a perfect square integer and partition $\Om$ into $4N$ squares of side-length $h=\pi/\sqrt{N}$.  Let $\mathcal{J}=\{0,\pm1,\pm2,\dots,\pm(\sqrt{N}-1), -\sqrt{N}\}^2$ and for each $\al\in\mathcal{J}$, define the semi-open square
	\begin{align}
		Q_\al=[ih,(i+1)h)\times[jh,(j+1)h),\quad\text{where}\quad \al=(i,j)\in\mathcal{J}.\notag
	\end{align}
Let $\mathcal{Q}$ denote the collection of all $Q_\al$, i.e.
	\begin{align}
		\mathcal{Q}:=\{Q_\al\}_{\al\in\mathcal{J}}.\notag
	\end{align}
Consider the functions
	\begin{align}
		\chi_{\al}(x):=\ind_{Q_\al}(x)\quad\text{and}\quad\psi_\al(x):=\sum_{k\in\Z^2}\chi_\al(x+2\pi k).\notag
	\end{align}
In particular, $\psi_\al$ is the $2\pi$-periodic extension of the characteristic function, $\ind_{Q_\al}$, of $Q_\al$ to $\R^2$.

Given $\eps>0$ fixed, we mollify $\psi_\al$ as
	\begin{align}
		\til{\psi}_\al(x)=(\rho_\eps*\psi_\al)(x),\quad x\in\T^2,\notag
	\end{align}
with the function $\rho_\eps(x):=\eps^{-2}\rho(x/\eps)$, where $\rho$ is given by
	\begin{align}
		\rho(x):=\begin{cases} K_0\exp\left(\frac{-1}{1-\abs{x}^2}\right),&\abs{x}<1\\
							0,&\abs{x}\geq1,
					\end{cases}\notag
	\end{align}
and $K_0>0$ is the absolute constant given by
	\begin{align}
		K_0^{-1}=\int_{\abs{x}<1}\exp\left(\frac{-1}{1-\abs{x}^2}\right)\ dx.\notag
	\end{align}
	
Now suppose that $N\geq9$ and $\eps=h/10$.  For each $\al=(i,j)\in\mathcal{J}$, let us also define the augmented squares, $\hat{Q}_\al$ and $Q_\al(\eps)$, by
	\begin{align}
		\hat{Q}_\al:=[(i-1)h, (i+2)h]\times[(j-1)h,(j+2)h]\quad\text{and}\quad Q_\al(\eps):=Q_\al+B(0,\eps).\notag
	\end{align}
so that $Q_\al\sub Q_\al(\eps)\sub\hat{Q}_\al$ for each $\al\in\mathcal{J}$, and the ``core," $C_\al(\eps)$, by
	\begin{align}
		C_\al(\eps):=Q_\al(\eps)\smod\bigcup_{\al'\neq\al}Q_{\al'}(\eps)\neq\varnothing,\quad \al\in\mathcal{J}.\notag
	\end{align}

Finally, for $\phi\in L^1(\T^2)$, define
	\begin{align}\label{glob:avg}
		\lb\phi\rb:=\frac{1}{4\pi^2}\int_{\T^2}\phi(x)\ dx.
	\end{align}
Then we have the following proposition, which follows from the definition of $\til{\psi}_\al$.  We note that properties $(i)-(iii)$ below can be found in \cite{azouani:olson:titi}, while property $(iv)$ can be proved by using a characterization of the Sobolev space norm (see Remark \ref{rmk:sob:char} below and rescaling Proposition \ref{lem:ben:oh} (below) and using the fact that $\{\psi_\al\}$ satisfies $(i)-(iii)$ (see Remark \ref{rmk:sob:char} below and the proof of Corollary \ref{coro:poin} for relevant details).

\begin{prop}\label{prop:pou}
Let $N\geq9$, $h:=L/\sqrt{N}$, and $\eps:=h/10$.  The collection $\{\psi_{\al}\}_{\al\in\mathcal{J}}$ forms a smooth partition of unity satisfying
	\begin{enumerate}[(i)]
		\item $0\leq\til{\psi}_\al\leq1$ and $\spt\til{\psi}_{\al}\sub ({Q}_\al(\eps)+(2\pi\Z)^2)$;
		\item $\til{\psi}_\al=1$, for all $x\in (C_\al(\eps)+(2\pi\Z)^2)$ and $\sum_{\al\in\mathcal{J}}\til{\psi}_\al(x)=1$, for all $x\in\R^2$;
		\item $\lb\til{\psi}_\al\rb=(h/(2\pi))^2$ and $4h/5\leq\Sob{\til{\psi}_\al}{L^2(\Om)}\leq6h/5$.
		\item $\sup_{\al\in\mathcal{J}}\Sob{\til{\psi}_{\al}}{\dot{H}^\be}\ls h^{1-\be}$, for all $\be\geq0$.
	\end{enumerate}
\end{prop}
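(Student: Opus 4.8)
The plan is to treat (i)--(iii) as essentially standard consequences of mollification together with the explicit geometry of the augmented squares and cores (these are recorded in \cite{azouani:olson:titi}), and to reserve the genuine work for (iv), which I would obtain by a scaling reduction to a single unit-scale profile.

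For (i), since $\rho_\eps\geq0$ with $\int_{\R^2}\rho_\eps=1$ and $0\leq\psi_\al\leq1$, the convolution obeys $0\leq\til\psi_\al=\rho_\eps*\psi_\al\leq1$; the support statement follows from $\spt\rho_\eps\sub B(0,\eps)$ and $\spt\psi_\al\sub Q_\al+(2\pi\Z)^2$, so that $\spt\til\psi_\al\sub Q_\al(\eps)+(2\pi\Z)^2$. For (ii), if $x$ lies in the core $C_\al(\eps)$ then $B(x,\eps)\sub\{\psi_\al=1\}$ by construction, whence $\til\psi_\al(x)=\int\rho_\eps(x-y)\,dy=1$; the identity $\sum_{\al}\til\psi_\al=\rho_\eps*\big(\sum_\al\psi_\al\big)=\rho_\eps*1=1$ uses only that $\{\psi_\al\}$ tiles $\R^2$. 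For (iii), Fubini gives $\int_{\T^2}\til\psi_\al=\int_{\T^2}\psi_\al=h^2$, hence $\lb\til\psi_\al\rb=(h/(2\pi))^2$; and the sandwich $\ind_{C_\al(\eps)}\leq\til\psi_\al\leq\ind_{Q_\al(\eps)}$ from (i)--(ii) reduces the $L^2$ bound to an area count, using that the central square of side $h-2\eps=4h/5$ sits inside the core while $|Q_\al(\eps)|=h^2+4h\eps+\pi\eps^2\leq(6h/5)^2$ when $\eps=h/10$.

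For (iv), the key observation is that every $\til\psi_\al$ is a common rescaled translate of a single profile. Writing $x_\al=(ih,jh)$ for $\al=(i,j)$, one has $\til\psi_\al(x)=g\big((x-x_\al)/h\big)$, where $g:=\rho_{1/10}*\ind_{[0,1]^2}$ is the mollified indicator of the unit square. Since $g$ is smooth with compact support, $g\in\dot H^\be$ for every $\be\geq0$ with $\Sob{g}{\dot H^\be}=:c_\be<\infty$; this is the unit-scale estimate, i.e. the content of Proposition \ref{lem:ben:oh}. The homogeneous $\dot H^\be$ seminorm in two dimensions is translation invariant and scales as $\Sob{g(\cdot/h)}{\dot H^\be}=h^{1-\be}\Sob{g}{\dot H^\be}$, as follows from $\wh{g(\cdot/h)}(\xi)=h^2\hat g(h\xi)$ and the substitution $\zeta=h\xi$. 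Because $c_\be$ is independent of $\al$, taking the supremum over $\al\in\mathcal J$ then yields $\sup_\al\Sob{\til\psi_\al}{\dot H^\be}\ls h^{1-\be}$.

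The main obstacle is that all of this takes place on $\T^2$ rather than $\R^2$, so the clean full-space scaling identity does not apply verbatim; one must verify that, for $h$ small (equivalently $N$ large), the periodic homogeneous seminorm of the localized bump $\til\psi_\al$ is comparable to the whole-space seminorm of the corresponding compactly supported function. Since $\spt\til\psi_\al$ fits inside one fundamental cell (after a translation permitted by periodicity), I would make this comparison explicit---either through the periodization relation between the torus and $\R^2$ seminorms, or, splitting into cases, via the Gagliardo double-integral characterization for $\be\in(0,1)$ (the modulus-of-continuity characterization advertised in the introduction, which is exactly scale covariant) and via the scaling of $\Lam^\be$ applied to the derivatives of $g$ for $\be\geq1$. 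The latter range is where the smoothness of $\rho$ is essential, and obtaining the bound uniformly across all $\be\geq0$, not merely $\be\in(0,1)$, is the delicate point I would treat with the most care.
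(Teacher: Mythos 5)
Your proof is correct, and for parts (i)--(iii) it is actually more self-contained than the paper, which simply cites \cite{azouani:olson:titi} for those facts; your sandwich $\ind_{C_\al(\eps)}\leq\til{\psi}_\al\leq\ind_{Q_\al(\eps)}$ together with the area counts $(h-2\eps)^2=(4h/5)^2$ and $h^2+4h\eps+\pi\eps^2\leq(6h/5)^2$ is exactly what is needed. For (iv), the heart of the proposition, your route is organized differently from the paper's. You write each $\til{\psi}_\al$ as an exact rescaled translate $g((\cdot-x_\al)/h)$ of a single profile $g=\rho_{1/10}*\ind_{[0,1]^2}$ and invoke the scale covariance $\Sob{g(\cdot/h)}{\dot{H}^\be(\R^2)}=h^{1-\be}\Sob{g}{\dot{H}^\be(\R^2)}$; this is clean and gives uniformity in $\al$ for free, but it forces a transference between the periodic seminorm and the whole-space seminorm, which you correctly single out as the delicate step. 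The paper avoids that transference entirely by never leaving the torus: in Remark \ref{rmk:sob:char} it uses the characterization \req{sob:hom:char}--\req{sob:char} (integer derivatives in $L^2$ plus a Gagliardo seminorm of order $\be-[\be]$ on the top derivatives, equivalent to the Fourier-side norm by \cite{adams,benyi:oh} and Proposition \ref{lem:ben:oh}), pushes all derivatives onto the mollifier via $\bdy^{\mathbf{k}}\til{\psi}_\al=(10/h)^{|\mathbf{k}|}(\bdy^{\mathbf{k}}\rho)_{h/10}*\psi_\al$, and estimates each piece by Young's convolution inequality, using $\Sob{\psi_\al}{L^2}=h$ and the $h$-independence of the $L^1$ norm of the rescaled kernel; the factors then assemble to $h^{1-\be}$. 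Your case-split plan for the transference does close the gap --- for $\be\in(0,1)$ the wrap-around contribution to the torus Gagliardo integral is controlled by $\Sob{\til{\psi}_\al}{L^2}^2\int_{|x|\gtrsim 1}|x|^{-2-2\be}\,dx\ls h^2\ls h^{2-2\be}$, and for $\be\geq1$ the chain rule on derivatives of $g$ reduces to the fractional case --- so nothing essential is missing; it is simply a cost your approach pays that the paper's does not. One small slip: the finiteness of $\Sob{g}{\dot{H}^\be}$ for smooth, compactly supported $g$ is not ``the content of Proposition \ref{lem:ben:oh}''; that proposition is the torus Gagliardo--Fourier equivalence for $0<\be<1$, which is what you actually need in the transference step, not for the unit-scale bound.
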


\begin{rmk}\label{rmk:sob:char}
When $\s\geq0$, let $[\s]$ denote the greatest integer such that $[\s]\leq\s$.  Then define
	\begin{align}\label{sob:hom:char}
		\Sob{\phi}{\til{\dot{H}}^\s}^2:=\sum_{0<|\mathbf{k}|\leq [\s]}\Sob{\bdy^{\mathbf{k}}\phi}{L^2}^2+\sum_{|\mathbf{k}|=[\s]}\Sob{\bdy^{\mathbf{k}}\phi}{\dot{H}^{\s-[\s]}}^2,\quad \mathbf{k}\in\N^2\cup\{\mathbf{0}\},\quad \bdy^{\mathbf{k}}=\bdy_x^{k_1}\bdy_y^{k_2}.
	\end{align}
%	\begin{align}\label{Hper:int}
%		\Sob{\phi}{\til{H}^\s}^2:=\sum_{|\al|\leq [\s]}\Sob{\bdy^{\al}\phi}{L^2}^2+\sum_{|\al|=[\s]}\Sob{\bdy^\al\phi}{H^{\s-[\s]}}^2,\quad \al\in\N^2\cup\{\mathbf{0}\},\quad \bdy^\al=\bdy_x^{\al_1}\bdy_y^{\al_2},
%	\end{align}
where for $0<\be<1$, we define
	\begin{align}\label{sob:char}
		\Sob{\phi}{\til{\dot{H}}^\be}^2:=\int_{\T^2}\int_{[-{\pi},{\pi})^2}\frac{\abs{\phi(x+y)-\phi(y)}^2}{\abs{x}^{2+2\be}}\ dx\ dy.
	\end{align}
Then $\Sob{\cdotp}{\til{\dot{H}}^\s}$ is equivalent to \req{Hdper} when $\s\geq0$ (cf. \cite{adams, benyi:oh} and Proposition \ref{lem:ben:oh}).  Indeed, there exists an absolute constant, $C>0$, such that for all $\phi\in H_{per}^\s(\T^2)$ with $\s\geq0$, we have
	\begin{align}\notag
		C^{-1}\Sob{\phi}{\til{\dot{H}}^\s}^2\leq\Sob{\phi}{\dot{H}^\s}^2\leq C\Sob{\phi}{\til{\dot{H}}^\s}^2.
	\end{align}
Therefore, to see (iv), let $\til{\psi}_\al^{\mathbf{k}}:=(\bdy^{\mathbf{k}}\rho)_{h/10}*\psi_\al$, for $\mathbf{k}\in\N^2\cup\{\mathbf{0}\}$.  Observe that
	\begin{align}\label{dk:psial}
		\bdy^{\mathbf{k}}\til{\psi}_\al=(\bdy^{\mathbf{k}}\rho_{h/10})*\til{\psi}_\al=\frac{10^{|\mathbf{k}|}}{h^{|\mathbf{k}|}}\til{\psi}_\al^{\mathbf{k}},
	\end{align}
and by Young's convolution inequality we have
	\begin{align}\label{dk:convo}
		\Sob{\til{\psi}_\al^{\mathbf{k}}}{L^2}\leq C_{\mathbf{k}}h,
	\end{align}
where $C_{\mathbf{k}}$ depends on $\bdy^{\mathbf{k}}\rho$, but not $h$.  On the other hand, by \req{sob:char} one can show that
	\begin{align}\label{dk:fracsob}
		\Sob{\til{\psi}_\al^{\mathbf{k}}}{H^{\be-[\be]}}\leq Ch^{1-\be+[\be]},
	\end{align}
where $C$ depends only on $\rho, \be$, but not $h$.  Thus, from \req{sob:hom:char}, \req{dk:psial}, \req{dk:convo}, and \req{dk:fracsob} that for $\be>0$, we have
	\begin{align}
		\Sob{\til{\psi}_\al}{\dot{H}^\be}^2&\leq C\sum_{0<|\mathbf{k}|\leq[\be]}\Sob{\bdy^{\mathbf{k}}\til{\psi}_\al}{L^2}^2+C\sum_{|\mathbf{k}|=[\be]}\Sob{\bdy^{\mathbf{k}}\til{\psi}_\al}{H^{\be-[\be]}}^2\notag\\
			&\leq C\sum_{0<|\mathbf{k}|\leq[\be]}|h|^{-2|\mathbf{k}|}\Sob{\til{\psi}_\al^{\mathbf{k}}}{L^2}^2+C\sum_{|\mathbf{k}|=[\be]}|h|^{-2[\be]}\Sob{\til{\psi}_\al^{\mathbf{k}}}{H^{\be-[\be]}}^2\leq Ch^{2-2\be},
	\end{align}
as desired.

%Similarly, define	
%	\begin{align}\label{sob:hom:char}
%		\Sob{\phi}{\til{\dot{H}}^\s}^2:=\sum_{0<|\al|\leq [\s]}\Sob{\bdy^{\al}\phi}{L^2}^2+\sum_{|\al|=[\s]}\Sob{\bdy^\al\phi}{H^{\s-[\s]}}^2,\quad \al\in\N^2,\quad \bdy^\al=\bdy_x^{\al_1}\bdy_y^{\al_2}.
%	\end{align}
%Then we also have for all $\phi\in\dot{H}_{per}^\s(\T^2)$ with $\s\geq0$ that
%	\begin{align}
%		C^{-1}\Sob{\phi}{\til{\dot{H}}^\s}^2\leq\Sob{\phi}{\dot{H}^\s}^2\leq C\Sob{\phi}{\til{\dot{H}}^\s}^2,
%	\end{align}
%for some universal constant, $C>0$, independent of $\phi$.
\end{rmk}

%%\begin{proof}
%%The properties are self-evident by construction. 
%%Let us only point out that to see $(iv)$, one can simply use the characterization of Sobolev spaces provided by 
%%	\begin{align}\notag
%%	\Sob{\phi}{\til{\dot{H}}^\s}:=\sum_{|\mathbf{k}|\leq [\s]}\Sob{\bdy^{\mathbf{k}}\phi}{L^2}^2+\sum_{|\mathbf{k}|=[\s]}\Sob{\bdy^\al\phi}{H^{\s-[\s]}}^2,\quad \mathbf{k}\in\N^2\cup\{\mathbf{0}\},\quad \bdy^{\mathbf{k}}=\bdy_x^{k_1}\bdy_y^{k_2},
%%	\end{align}
%%where for $0<\be<1$, we define
%%	\begin{align}\notag
%%		\Sob{\phi}{H^\be}^2:=\int_{\T^2}\int_{[-{\pi},{\pi})^2}\frac{\abs{\phi(x+y)-\phi(y)}^2}{\abs{x}^{2+2\be}}\ dx\ dy.
%%	\end{align}
%%Then for all $\phi\in \dot{H}^\s_{per}(\T^2)$ with $\s\geq0$ we have
%%	\begin{align}\notag
%%		C^{-1}\Sob{\phi}{\til{\dot{H}}^\s}\leq\Sob{\phi}{\dot{H}^\s}\leq C\Sob{\phi}{\til{\dot{H}}^\s},
%%	\end{align}
%%for some absolute constant $C>0$, independent of $\phi$ (cf. \cite{adams, benyi:oh} and Proposition \ref{lem:ben:oh} below).  It follows that
%%	\begin{align}
%%		\bdy^{\mathbf{k}}\til{\psi}_\al=(\bdy^{\mathbf{k}}\rho_\eps)*\psi_\al=
%%	\end{align}

%To see (iv) one can simply use an the characterization of Sobolev norms provided by \req{sob:hom:char}.  Indeed, for $|\mathbf{k}|>0$, $\mathbf{k}=(n_1,n_2)$, we have
%	\begin{align}
%		\bdy^{\mathbf{k}}\til{\psi}_\al=(\bdy^{\mathbf{k}}
%	\end{align}
%\end{proof}

For $\phi\in L^1_{loc}(\Om)$, define
	\begin{align}\notag
		\phi_{Q}=\frac{1}{a({Q})}\int_{Q}\phi(x)\ dx\quad\text{and}\quad\til{\phi}_{Q_\al}=\frac{1}{\til{a}({{Q}_\al})}\int_{\T^2}\phi(x)\til{\psi}_\al(x)\ dx,
	\end{align}
where $a({Q})$ denotes the area of $Q$ and
	\begin{align}\label{mod:avg}
		\til{a}({{Q}_\al}):=\int_{\T^2}\til{\psi}_\al(x)\ dx.
	\end{align}
At this point, let us emphasize that $\til{\psi}_\al$ are \textit{non-negative} for each $\al\in\mathcal{J}$.  Observe that for each $\al\in\mathcal{J}$, there exists an absolute constant $c>0$, independent of $h, \al,\eps$, such that
	\begin{align}\label{equiv:cubes}
		c^{-1}\leq \frac{\til{a}{({Q_\al})}}{{a}({Q})},  \frac{a{({Q})}}{{a}({\hat{Q}_\al})},  \frac{a{({Q})}}{{a}({Q_\al(\eps)})}\leq c,\quad Q\in\{Q_\al, Q_\al(\eps), \hat{Q}_\al\}.
	\end{align}

Finally, we define the smooth volume element interpolant by
	\begin{align}\label{vol:elts}
		\mathcal{I}_h(\phi):=\sum_{\al\in\mathcal{J}}\til{\phi}_{Q_\al}\til{\psi}_\al
	\end{align}
and the ``shifted" smooth volume element interpolant by
	\begin{align}\label{vol:elts:shift}
		{I}_h(\phi):=\sum_{\al\in\mathcal{J}}\til{\phi}_{Q_\al}\bar{\psi}_\al,\quad \bar{\psi}_\al:=\til{\psi}_{\al}-\lb\til{\psi}_\al\rb.
	\end{align}

Observe that we have the following relation between $\mathcal{I}_h$ and $I_h$.

\begin{lem}\label{lem:Ih}
Let $\mathcal{I}_h, I_h$ be given by \req{vol:elts}, \req{vol:elts:shift}.  Let $\phi\in L^1_{loc}(\T^2)$.  Then
	\begin{enumerate}[(i)]
		\item $I_h\phi=\mathcal{I}_h\phi-\lb\mathcal{I}_h\phi\rb$;
		\item $\lb I_h\phi\rb=0$;
		\item $\Lam^\be I_h\phi=\Lam^\be\mathcal{I}_h\phi$, for $\be>0$.
	\end{enumerate}
\end{lem}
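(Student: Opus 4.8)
The plan is to derive all three identities directly from the definitions \req{vol:elts} and \req{vol:elts:shift}, exploiting only the linearity of the global average $\lb\cdotp\rb$ and the fact that each coefficient $\til{\phi}_{Q_\al}$ is a \emph{scalar} (independent of $x$), so that it commutes with both $\lb\cdotp\rb$ and the Fourier multiplier $\Lam^\be$.

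First I would establish (i). Substituting $\bar{\psi}_\al=\til{\psi}_\al-\lb\til{\psi}_\al\rb$ into \req{vol:elts:shift} and splitting the sum gives
\[
	I_h\phi=\sum_{\al\in\mathcal{J}}\til{\phi}_{Q_\al}\til{\psi}_\al-\sum_{\al\in\mathcal{J}}\til{\phi}_{Q_\al}\lb\til{\psi}_\al\rb=\mathcal{I}_h\phi-\sum_{\al\in\mathcal{J}}\til{\phi}_{Q_\al}\lb\til{\psi}_\al\rb.
\]
The key observation is that pulling each scalar $\til{\phi}_{Q_\al}$ through $\lb\cdotp\rb$ identifies the correction term as the average of $\mathcal{I}_h\phi$, namely $\sum_{\al}\til{\phi}_{Q_\al}\lb\til{\psi}_\al\rb=\lb\sum_{\al}\til{\phi}_{Q_\al}\til{\psi}_\al\rb=\lb\mathcal{I}_h\phi\rb$, which yields (i).

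For (ii), I would apply $\lb\cdotp\rb$ to the identity in (i) and use that the average of a constant reproduces that constant (since $\lb1\rb=1$), so $\lb\lb\mathcal{I}_h\phi\rb\rb=\lb\mathcal{I}_h\phi\rb$, whence $\lb I_h\phi\rb=\lb\mathcal{I}_h\phi\rb-\lb\mathcal{I}_h\phi\rb=0$. For (iii), I would apply $\Lam^\be$ to (i); since $\lb\mathcal{I}_h\phi\rb$ is a constant and $\Lam^\be$ has Fourier symbol $\abs{\mathbf{k}}^\be$, which vanishes at $\mathbf{k}=\mathbf{0}$ for $\be>0$, the multiplier annihilates constants, giving $\Lam^\be I_h\phi=\Lam^\be\mathcal{I}_h\phi$. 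There is no substantive obstacle here: the statement is purely formal, and the only point requiring care is the repeated use of the scalar nature of the coefficients $\til{\phi}_{Q_\al}$, which is what permits them to commute with both the averaging operator and the fractional Laplacian.
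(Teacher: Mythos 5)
Your proof is correct and is exactly the argument the paper intends: the lemma is stated there as an immediate observation from the definitions \req{vol:elts} and \req{vol:elts:shift}, and your computation (split the finite sum, pull the scalar coefficients $\til{\phi}_{Q_\al}$ through the average $\lb\cdotp\rb$, note that $\lb\cdotp\rb$ fixes constants and that $\Lam^\be$ annihilates constants for $\be>0$) fills in precisely those elementary steps. Nothing further is needed.
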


Now let us prove Proposition \ref{thm:Jh} when $J_h$ is given by either $\mathcal{I}_h$ or $I_h$, as defined by \req{vol:elts}, \req{vol:elts:shift}, respectively.

\begin{proof}[Proof of Proposition \ref{thm:Jh}: Part I]
We will establish  (Property 0.1)-(Property 0.3) for $J_h$ given by either \req{vol:elts} or \req{vol:elts:shift}.  It will suffice to consider $J_h=\mathcal{I}_h$ given by \req{vol:elts}.  Indeed, by Lemma \ref{lem:Ih}, the fact that
	\begin{align}\label{avg:lp}
		\Sob{\lb \phi\rb}{L^p}\leq \Sob{\phi}{L^p},\quad \phi\in L^p(\T^2),\quad 1\leq p\leq\infty,
	\end{align}
and the triangle inequality, we have that the properties (Property 0.1)-(Property 0.3) applied to $J_h=\mathcal{I}_h$ easily imply the corresponding properties for $J_h=I_h$ given by \req{vol:elts:shift}.

Suppose then that $J_h=\mathcal{I}_h$.  Observe that for each $x\in\T^2$ and $\al\in\mathcal{J}$, we have that $n_\al(x):=\#\{Q_i(\eps):\til{\psi}_\al(x)\neq0,\ \text{for some}\ x\in Q_\al(\eps)\}$ is independent of $h$.  In particular, $\sup_{\al\in\mathcal{J}}\sup_{{x\in \Om}}n_\al(x)=n_0$, for some fixed positive integer $n_0$, independent of $N, h$.  It follows that
	\begin{align}\label{orthog1}
		\left(\sum_{\al\in\mathcal{J}}\til{\phi}_{Q_\al}\til{\psi}_\al\right)^p\leq C^p\sum_{\al\in\mathcal{J}}\til{\phi}_{Q_\al}^p\til{\psi}_\al^p,
	\end{align}
for some absolute constant $C>0$ depending only on $n_0$.

We prove property (Property 0.1).  For $1\leq p\leq\infty$, it follows from \req{equiv:cubes}, \req{orthog1}, and H\"older's inequality that
	\begin{align}
		\Sob{\mathcal{I}_h\phi}{L^p}^p&\leq C^p\sum_{\al\in\mathcal{J}}|\til{\phi}_{Q_\al}|^p\Sob{\til{\psi}_\al}{L^p}^p\leq C^{p}\sum_{\al\in\mathcal{J}}\left(\frac{a(Q_\al(\eps))}{\til{a}(Q_\al)}\right)^p\Sob{\phi}{L^p(Q_\al(\eps))}^p\lesssim C^p\Sob{\phi}{L^p}^p.\notag
	\end{align}
for some absolute constant $C>0$ that depends on $n_0$.

Next, we prove (Property 0.2).  Suppose $p\geq2$.    It follows from \req{equiv:cubes}, \req{orthog1}, and the Cauchy-Schwarz inequality that
	\begin{align}
		\Sob{\mathcal{I}_h\phi}{L^p}^p&\leq C^p \sum_{\al\in\mathcal{J}}|\til{\phi}_{Q_\al}|^p\Sob{\til{\psi}_\al}{L^p}^p\notag\\
						&\leq C^p\sum_{\al\in\mathcal{J}}\frac{1}{\til{a}(Q_\al)^{p}}\Sob{\phi}{L^2(Q_\al(\eps))}^p\Sob{\til{\psi}_\al}{L^2}^p\Sob{\til{\psi}_\al}{L^p}^p\notag\\
						&\leq C^p\Sob{\phi}{L^2}^{p-2}\sum_{\al\in\mathcal{J}}\left(\frac{a(Q_\al(\eps))}{\til{a}(Q_\al)}\right)^{p/2}\left(\frac{a(Q_\al(\eps))}{\til{a}(Q_\al)^{p/2}}\right)\Sob{\phi}{L^2(Q_\al(\eps))}^{2}\notag\\
						&\ls C^ph^{2-p}\Sob{\phi}{L^2}^p,\notag
	\end{align}
for some absolute constant $C>0$, depending on $n_0$.

To prove (Property 0.3), it follows from Proposition \ref{prop:pou} $(iv)$, \req{equiv:cubes}, and the Cauchy-Schwarz inequality that
	\begin{align}\notag
		\Sob{\mathcal{I}_h\phi}{\dot{H}^\be}^2\leq \sum_{\al\in\mathcal{J}}\til{\phi}_{Q_\al}^2\Sob{\til{\psi}_{\al}}{\dot{H}^\be}^2\leq Ch^{-2\be}\sum_{\al\in\mathcal{J}}a({Q_\al})\til{\phi}_{Q_\al}^2\leq Ch^{-2\be}\Sob{\phi}{L^2}^2,
	\end{align}
as desired.  Note that the absolute constant above depends on $\til{\psi}_\al$, but is independent of $h$.  
\end{proof}

To prove Part II of Proposition \ref{thm:Jh}, we will require the following two results, the first of which is a fractional Poincar\'e-type inequality.  The second provides an alternate characterization of Sobolev norms.

\begin{lem}[\cite{hurri-syrj}]\label{lem:fp}
Let $Q\sub\R^2$ be a closed square.  Let $1\leq q\leq p<\infty$ and $\de,\rho\in(0,1)$.  Then for $\phi\in L^p(Q)$, we have
	\begin{align}\notag
		\frac{1}{{a(Q)}}\int_Q\abs{\phi(x)-\phi_Q}^q\ dx\lesssim{a(Q)}^{q(\de/2-1/p)}\left(\int_Q\int_{Q\cap B(x,\rho\abs{Q}^{1/2})}\frac{\abs{\phi(x)-\phi(y)}^p}{\abs{x-y}^{2+\de p}}\ dy\ dx\right)^{q/p},
	\end{align}
where the suppressed absolute constant is independent of $\phi$.
\end{lem}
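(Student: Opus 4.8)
The plan is to reduce the inequality to a purely local estimate on a normalized square and then to pass from long-range to short-range differences through a chaining argument.

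First I would exploit the scale invariance of the inequality. Writing $\ell=\abs{Q}^{1/2}$ for the side length and substituting $\phi(x)=\til{\phi}(x/\ell)$, one checks directly that the left-hand side is unchanged, while the prefactor $a(Q)^{q(\de/2-1/p)}$ exactly cancels the $\ell$-power produced by the change of variables in the double integral: the two Jacobians contribute $\ell^{4}$, the kernel contributes $\ell^{-(2+\de p)}$, and the ball $B(x,\rho\ell)$ rescales to $B(z,\rho)$, so the bracketed integral scales like $\ell^{2-\de p}$ and, raised to the power $q/p$, precisely annihilates the prefactor $\ell^{q\de-2q/p}$. Hence it suffices to prove the inequality on the unit square with $a(Q)=1$. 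Moreover, since $1\leq q\leq p$ and $dx$ is then a probability measure on $Q$, Jensen's inequality gives $(\int_Q\abs{\phi-\phi_Q}^q\ dx)^{1/q}\leq(\int_Q\abs{\phi-\phi_Q}^p\ dx)^{1/p}$, so it is enough to treat the case $q=p$.

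For $q=p$ I would first bound the mean oscillation by an unrestricted double integral: since $\phi_Q=\int_Q\phi\ dy$ on the unit square, Jensen applied to the inner average yields
\[
	\int_Q\abs{\phi-\phi_Q}^p\ dx\leq\int_Q\int_Q\abs{\phi(x)-\phi(y)}^p\ dy\ dx .
\]
The essential remaining task is to dominate this \emph{full} double integral by the \emph{truncated} one, in which only pairs with $\abs{x-y}<\rho$ survive. For this I would partition $Q$ into a uniform grid of subcubes $\{Q_i\}$ of side length comparable to $\rho$, small enough that any two points lying in a single subcube, or in two face-adjacent subcubes, are within distance $\rho$ (so that such pairs fall inside the truncation). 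Given $x\in Q_i$ and $y\in Q_j$, I connect $Q_i$ to $Q_j$ by a chain $Q_i=Q_{k_0},\dots,Q_{k_M}=Q_j$ of successively adjacent subcubes, whose length $M$ is bounded by the fixed total number of subcubes, and telescope:
\[
	\abs{\phi(x)-\phi(y)}^p\ls\abs{\phi(x)-\phi_{Q_i}}^p+\sum_{m=0}^{M-1}\abs{\phi_{Q_{k_m}}-\phi_{Q_{k_{m+1}}}}^p+\abs{\phi_{Q_j}-\phi(y)}^p ,
\]
the implied constant coming from the $\ell^p$-triangle inequality applied to a bounded number of terms.

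Finally I would control each type of term by a local fractional seminorm. For adjacent subcubes, Jensen gives $\abs{\phi_{Q_{k_m}}-\phi_{Q_{k_{m+1}}}}^p\ls\rho^{-4}\int_{Q_{k_m}}\int_{Q_{k_{m+1}}}\abs{\phi(u)-\phi(v)}^p\ du\ dv$, and since $\abs{u-v}\ls\rho$ throughout this product the weight $\abs{u-v}^{-(2+\de p)}$ is bounded below by $c\rho^{-(2+\de p)}$; hence each link is controlled, up to a fixed power of $\rho$, by $\int_{Q_{k_m}}\int_{Q_{k_{m+1}}}\abs{\phi(u)-\phi(v)}^p\abs{u-v}^{-(2+\de p)}\ du\ dv$. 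The endpoint terms $\int_{Q_i}\abs{\phi-\phi_{Q_i}}^p\ dx$ are handled identically, using that same-subcube pairs also lie within the truncation radius. Summing over $i,j$ and over links—each link appearing only a bounded number of times—collapses everything into the truncated integral. The main obstacle is precisely this chaining step: the scaling and Jensen reductions are routine, but converting the long-range oscillation $\abs{\phi(x)-\phi(y)}$ over distances up to $\mathrm{diam}(Q)$ into a sum of purely short-range, truncated contributions requires the grid/telescoping construction together with careful bookkeeping of link multiplicities. Since $\rho,\de,p,q$ are fixed, every intervening $\rho$-power and combinatorial factor is an absolute constant independent of $\phi$ (and, by the first reduction, of $Q$).
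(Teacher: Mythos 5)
Your proof is correct, but it is worth knowing that the paper itself contains no proof of this lemma at all: it is imported verbatim from Hurri-Syrj\"anen and V\"ah\"akangas \cite{hurri-syrj}, where it is a special case of a fractional (Sobolev--)Poincar\'e inequality for John domains in $\R^n$, proved there by chaining over a Whitney-type decomposition with maximal-function estimates. Your argument is thus a genuinely different, self-contained route tailored to the square. Each step checks out: the scaling computation is exact (the prefactor $a(Q)^{q(\de/2-1/p)}$ is precisely what makes the inequality scale-invariant, so the unit square suffices); the Jensen reduction from $q$ to $p$ is legitimate because $dx$ is then a probability measure; the bound of the mean oscillation by the full double integral is again Jensen; and the grid/telescoping argument correctly converts long-range differences into same-cube and adjacent-cube contributions, each of which lies inside the truncated region once the grid side length $s$ satisfies $s\sqrt{5}<\rho$ (e.g.\ $s=1/N$, $N=\lceil 3/\rho\rceil$), with the weight $\abs{u-v}^{-(2+\de p)}\gtrsim \rho^{-(2+\de p)}$ on such pairs. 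Since $\rho$ is a fixed parameter, the chain lengths and link multiplicities are bounded by constants depending only on $\rho$, $p$, $q$, $\de$ --- never on $\phi$ or, after rescaling, on $Q$ --- which is all the lemma demands. What the citation buys the paper is generality (arbitrary John domains, all dimensions, sharper truncations and exponents); what your proof buys is elementarity: for the square, which is the only case the paper actually uses (via Corollary \ref{coro:poin}), nothing beyond Jensen's inequality and bounded chaining is needed. The only thing to make explicit in a polished write-up is the grid-size choice above and the resulting constant $(M+2)^{p-1}$ from the $\ell^p$-triangle inequality; with that pinned down the argument is complete.
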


\begin{prop}[\cite{benyi:oh}]\label{lem:ben:oh}
Let $0<\be<1$.  Then for $\phi\in\dot{H}^\be_{per}(\T^2)$, we have
	\begin{align}\notag
		\Sob{\phi}{\dot{H}^\be(\T^2)}^2\sim\int_{\T^2}\int_{[-{\pi},{\pi})^2}\frac{\abs{\phi(x+y)-\phi(y)}^2}{\abs{x}^{2+2\be}}\ dx\ dy.
	\end{align}
\end{prop}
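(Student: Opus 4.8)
The plan is to pass to Fourier series and reduce the claimed equivalence to a single pointwise bound on an associated Fourier multiplier. Writing $\phi(y)=\sum_{\mathbf{k}\in\Z^2}\hat{\phi}(\mathbf{k})e^{i\mathbf{k}\cdot y}$, for each fixed $x$ the function $y\mapsto\phi(x+y)-\phi(y)$ is periodic with Fourier coefficients $\hat{\phi}(\mathbf{k})(e^{i\mathbf{k}\cdot x}-1)$, so Parseval's identity in $y$ gives, up to a fixed normalization constant,
\[
	\int_{\T^2}\abs{\phi(x+y)-\phi(y)}^2\,dy\sim\sum_{\mathbf{k}\in\Z^2}\abs{\hat{\phi}(\mathbf{k})}^2\,\abs{e^{i\mathbf{k}\cdot x}-1}^2.
\]
Since every integrand is nonnegative, Tonelli's theorem lets me divide by $\abs{x}^{2+2\be}$ and interchange the sum with the $x$-integration, which yields
\[
	\int_{\T^2}\int_{[-\pi,\pi)^2}\frac{\abs{\phi(x+y)-\phi(y)}^2}{\abs{x}^{2+2\be}}\,dx\,dy\sim\sum_{\mathbf{k}\in\Z^2\smod\{\mathbf{0}\}}m(\mathbf{k})\,\abs{\hat{\phi}(\mathbf{k})}^2,\qquad m(\mathbf{k}):=\int_{[-\pi,\pi)^2}\frac{\abs{e^{i\mathbf{k}\cdot x}-1}^2}{\abs{x}^{2+2\be}}\,dx,
\]
where the $\mathbf{k}=\mathbf{0}$ term drops out because $\abs{e^{i\mathbf{0}\cdot x}-1}=0$. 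Comparing with the definition of the homogeneous norm in \req{hdper:norm}, the proposition reduces to the multiplier estimate $m(\mathbf{k})\sim\abs{\mathbf{k}}^{2\be}$, uniformly in $\mathbf{k}\neq\mathbf{0}$.

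For the multiplier estimate I would split the integral at the natural scale $\abs{x}\sim 1/\abs{\mathbf{k}}$; write $n:=\abs{\mathbf{k}}$ and use $\abs{e^{i\mathbf{k}\cdot x}-1}^2=4\sin^2(\tfrac12\mathbf{k}\cdot x)$. On $\{\abs{x}\leq 1/n\}$ the elementary bound $\abs{e^{i\mathbf{k}\cdot x}-1}\leq\abs{\mathbf{k}\cdot x}\leq n\abs{x}$ gives, in polar coordinates,
\[
	\int_{\abs{x}\leq 1/n}\frac{n^2\abs{x}^2}{\abs{x}^{2+2\be}}\,dx\sim n^2\int_0^{1/n}r^{1-2\be}\,dr\sim n^{2\be},
\]
the radial integral converging because $\be<1$. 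On $\{1/n<\abs{x}\}\cap[-\pi,\pi)^2$ the crude bound $\abs{e^{i\mathbf{k}\cdot x}-1}^2\leq 4$ gives $\int_{1/n<\abs{x}<\sqrt{2}\pi}\abs{x}^{-(2+2\be)}\,dx\sim\int_{1/n}^{\sqrt{2}\pi}r^{-1-2\be}\,dr\sim n^{2\be}$, now using $\be>0$. Together these establish the upper bound $m(\mathbf{k})\lesssim\abs{\mathbf{k}}^{2\be}$.

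The reverse inequality is the crux, since one must show the oscillation of $\sin^2(\tfrac12\mathbf{k}\cdot x)$ does not produce cancellation. Here I would localize to a single annular sector on which the sine is bounded below: writing $\hat{\mathbf{k}}=\mathbf{k}/n$ and $x=r\omega$ with $\omega\in S^1$, restrict to the cone $\{\omega:\hat{\mathbf{k}}\cdot\omega\geq 1/2\}$, which has fixed positive arclength independent of the direction of $\mathbf{k}$, and, for each such $\omega$, to the radii $r$ with $\tfrac12\mathbf{k}\cdot x=\tfrac12 nr(\hat{\mathbf{k}}\cdot\omega)\in[\pi/6,\pi/3]$. This confines $r$ to an interval $I(\omega)$ of length comparable to $1/n$, on which $\sin^2(\tfrac12\mathbf{k}\cdot x)\gtrsim 1$, so
\[
	m(\mathbf{k})\gtrsim\int_{\{\hat{\mathbf{k}}\cdot\omega\geq 1/2\}}\left(\int_{I(\omega)}\frac{1}{r^{2+2\be}}\,r\,dr\right)d\omega\gtrsim n^{2\be}.
\]
For $\abs{\mathbf{k}}$ large this sector lies well inside $[-\pi,\pi)^2$, so restricting to the box is harmless, and the constants are uniform in the direction of $\mathbf{k}$; the finitely many wavenumbers below any fixed threshold are handled trivially, since $m(\mathbf{k})$ is then a fixed positive number comparable to $\abs{\mathbf{k}}^{2\be}$. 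Combining the two bounds gives $m(\mathbf{k})\sim\abs{\mathbf{k}}^{2\be}$ and hence the proposition. I expect the main obstacle to be precisely this lower bound, where one must exhibit an explicit sector on which the oscillatory integrand stays uniformly positive; the Fourier reduction and the upper bound are routine.
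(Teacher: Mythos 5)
Your proof is correct, and it is essentially the argument of the cited source: the paper itself gives no proof of this proposition, deferring to B\'enyi--Oh \cite{benyi:oh}, whose proof is exactly this reduction via Parseval and Tonelli to the multiplier estimate $m(\mathbf{k})\sim\abs{\mathbf{k}}^{2\be}$, proved by splitting at $\abs{x}\sim1/\abs{\mathbf{k}}$ with the bound $\abs{e^{i\mathbf{k}\cdot x}-1}\leq\abs{\mathbf{k}}\abs{x}$ on the inner region and a lower bound from a region where the oscillation is bounded away from zero. Your handling of the endpoint hypotheses ($\be<1$ for the inner radial integral, $\be>0$ for the outer one), of the uniformity in the direction of $\mathbf{k}$, and of the finitely many small wavenumbers is sound, so nothing is missing.
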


When then have the following corollary.

\begin{coro}\label{coro:poin}
Let $0<\de<1$ and $Q\sub\R^2$ a closed square.  Then for $\phi\in{H}_{per}^\de(Q)$, we have
	\begin{align}\notag
		\int_Q\abs{\phi(x)-\phi_Q}^2\ dx\lesssim\abs{Q}^{2\de}\Sob{\phi}{\dot{H}^\de(Q)}^2.
	\end{align}
\end{coro}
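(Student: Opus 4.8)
The plan is to combine the fractional Poincar\'e inequality of Lemma \ref{lem:fp} with the Gagliardo-type characterization of the homogeneous Sobolev norm in Proposition \ref{lem:ben:oh}. First I would specialize Lemma \ref{lem:fp} to the exponents $q=p=2$ (fixing any $\rho\in(0,1)$) and to the given $\de\in(0,1)$. This choice collapses the outer scaling exponent to $q(\de/2-1/p)=\de-1$ and turns the inner kernel into $\abs{x-y}^{-(2+2\de)}$, giving
	\[
		\frac{1}{a(Q)}\int_Q\abs{\phi(x)-\phi_Q}^2\,dx \lesssim a(Q)^{\de-1}\int_Q\int_{Q\cap B(x,\rho\abs{Q}^{1/2})}\frac{\abs{\phi(x)-\phi(y)}^2}{\abs{x-y}^{2+2\de}}\,dy\,dx.
	\]
Multiplying both sides by $a(Q)$ then produces the prefactor $a(Q)^{\de}$ on the right-hand side.

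Next I would enlarge the inner domain of integration from $Q\cap B(x,\rho\abs{Q}^{1/2})$ to all of $Q$. Since the integrand is nonnegative, this only increases the right-hand side and yields the full Gagliardo double integral $\int_Q\int_Q \abs{\phi(x)-\phi(y)}^2\abs{x-y}^{-(2+2\de)}\,dy\,dx$. By Proposition \ref{lem:ben:oh} (applied after the translation $x\mapsto x+y$ that recasts the integral in difference-quotient form) this double integral is comparable to $\Sob{\phi}{\dot{H}^\de(Q)}^2$. Combining the two steps gives $\int_Q\abs{\phi-\phi_Q}^2\,dx\lesssim a(Q)^{\de}\Sob{\phi}{\dot{H}^\de(Q)}^2$.

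Finally I would reconcile the power of $Q$ with the one in the statement: since $Q$ is a square, its area satisfies $a(Q)=\abs{Q}^2$, where $\abs{Q}$ denotes the side-length, so $a(Q)^{\de}=\abs{Q}^{2\de}$, which is exactly the asserted inequality.

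I expect the main obstacle to be the bookkeeping in the middle two steps: passing from the ball-truncated Gagliardo integral supplied by Lemma \ref{lem:fp} to the global seminorm, and then invoking Proposition \ref{lem:ben:oh}, which is stated for the fixed torus $[-\pi,\pi)^2$ rather than an arbitrary square. This is handled either by a translation/dilation reduction to $\T^2$, using that the quotient $\int_Q\abs{\phi-\phi_Q}^2\,dx\big/\int_Q\int_Q\abs{\phi(x)-\phi(y)}^2\abs{x-y}^{-(2+2\de)}\,dy\,dx$ is homogeneous of degree $2\de$ in the side-length of $Q$ (which is what pins down the exponent $\abs{Q}^{2\de}$), or simply by observing that the only instance in which the corollary is invoked is $Q=\T^2$ (see \req{poin:avg}), in which case Proposition \ref{lem:ben:oh} applies verbatim.
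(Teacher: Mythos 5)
Your proof is correct and takes essentially the same route as the paper's: both specialize Lemma \ref{lem:fp} to $p=q=2$, pass from the ball-truncated Gagliardo integral to an untruncated one, and identify the result with the $\dot{H}^\de$ seminorm via Proposition \ref{lem:ben:oh} after the translation/dilation that carries $Q$ to $\T^2$. The only immaterial difference is the order of operations (the paper rescales to the torus first and only then enlarges the inner ball $B(0,\rho)$ to $[-\pi,\pi)^2$, whereas you enlarge to $Q\times Q$ first), and your reading of the exponent, $a(Q)^{\de}=\ell^{2\de}$ with $\ell$ the side-length, is the dimensionally consistent one that the application in \req{poin:pos} requires.
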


\begin{proof}
Let $Q\sub\R^2$ be a closed square and $x_0$ denote its center and $Q_0$ denote the same square, but centered at the origin.  Let $\phi\in {H}^\de_{per}(Q)$ and $\rho\leq1/4$ and let $\phi_{x_0}(x)=\phi(x+x_0)$.  Then observe by translating and  rescaling, we have
	\begin{align}
		\int_Q\int_{Q\cap B(x,\rho\abs{Q}^{1/2})}\frac{\abs{\phi(x)-\phi(y)}^p}{\abs{x-y}^{2+\de p}}\ dy\ dx&\leq\int_{Q_0}\int_{B(0,\rho\abs{Q}^{1/2})}\frac{\abs{\phi_{x_0}(x)-\phi_{x_0}(x+y)}^p}{\abs{y}^{2+\de p}}\ dy\ dx\notag\\
					&=\int_{Q_0}\int_{B(0,\rho\abs{Q}^{1/2})}\frac{\abs{\phi_{x_0}(x+y)-\phi_{x_0}(y)}^p}{\abs{x}^{2+\de p}}\ dx\ dy\notag\\
					&=\frac{\abs{Q}^{1-\de p/2}}{(2\pi)^{2-\de p}}\int_{\T^2}\int_{B(0,\rho)}\frac{\abs{\til{\phi}_{x_0}(x+y)-\til{\phi}_{x_0}(y)}^p}{\abs{x}^{2+\de p}}\ dx\ dy\notag\\	
						&\leq \frac{\abs{Q}^{1-\de p/2}}{(2\pi)^{2-\de p}}\int_{\T^2}\int_{[-{\pi},{\pi})^2}\frac{\abs{\til{\phi}_{x_0}(x+y)-\til{\phi}_{x_0}(y)}^p}{\abs{x}^{2+\de p}}\ dx\ dy,\notag
	\end{align}
where $\til{\phi}(x)=\phi((\abs{Q}^{1/2}/(2\pi))x)$.  Thus, by setting $p=q=2$, then applying Lemma \ref{lem:fp} and Proposition \ref{lem:ben:oh}, we obtain
	\begin{align}\notag
		\int_Q\abs{\phi(x)-\phi_Q}^2\ dx\lesssim(2\pi)^{2\de-2}\Sob{\til{\phi}_{x_0}}{\dot{H}^\de(\T^2)}^2\sim\abs{Q}^{2\de}\Sob{\phi}{\dot{H}^\de(Q)}^2,
	\end{align}
as desired.
\end{proof}

The next result adapts Corollary \ref{coro:poin} to modified local spatial averages.   In particular, given a square $Q\sub\R^2$ and $\eps>0$, define $Q(\eps)=Q+B(0,\eps)$.  Suppose that $\til{\psi}\in C^\infty(\R^2)$ is an arbitrary non-negative function satisfying $0\leq\til{\psi}\leq1$,  $\spt\psi\sub Q(\eps)$, and $\til{\psi}|_Q>0$.  Then, given $\phi\in L^1_{loc}(\R^2)$, we define
	\begin{align}\label{mod:avg:2}
		\til{a}(Q):=\int\til{\psi}(x)\ dx\quad\text{and}\quad \til{\phi}_Q:=\frac{1}{\til{a}(Q)}\int\phi(x)\til{\psi}(x)\ dx.
	\end{align}

\begin{coro}\label{coro:poin2}
Let $q\geq1$, $0<\de<1$, and $Q\sub\R^2$ a closed square.  Then for $\phi\in{H}^\de_{per}(Q)$, we have
	\begin{align}\label{coro:poin2:ineq}
		\int_Q\abs{\phi(x)-\til{\phi}_{Q_\al}}^q\ dx\lesssim(a({Q_\al})+\eps^2)^{2\de}\Sob{\phi}{H^\de(Q)}^2.
	\end{align}
\end{coro}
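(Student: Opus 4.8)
The plan is to reduce Corollary \ref{coro:poin2} to Corollary \ref{coro:poin} by comparing the weighted average $\til\phi_Q$ against an ordinary average over a genuine square that engulfs the enlarged support $Q(\eps)$. First I would fix a closed square $\hat Q\supseteq Q(\eps)\supseteq Q$ whose side length is comparable to that of $Q(\eps)$, so that $a(\hat Q)\sim a(Q)+\eps^2$; this is possible because $Q(\eps)=Q+B(0,\eps)$ lies inside the concentric square of side $(\text{side of }Q)+2\eps$, while $a(Q(\eps))\gtrsim a(Q)+\eps^2$ furnishes the matching lower bound. The purpose of $\hat Q$ is simply to supply a genuine square on which Corollary \ref{coro:poin} can be applied, since $Q(\eps)$ itself is not a square.

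Next I would split, using $(a+b)^2\le 2a^2+2b^2$ and the fact that $\til\phi_Q$ is a constant,
\begin{align}\notag
\int_Q\bigl|\phi-\til\phi_Q\bigr|^2\,dx\leq 2\int_{\hat Q}\bigl|\phi-\phi_{\hat Q}\bigr|^2\,dx+2\,a(Q)\,\bigl|\phi_{\hat Q}-\til\phi_Q\bigr|^2,
\end{align}
where $\phi_{\hat Q}$ denotes the ordinary mean of $\phi$ over $\hat Q$. The first term is controlled immediately by Corollary \ref{coro:poin} on the square $\hat Q$, giving $a(\hat Q)^{2\de}\Sob{\phi}{\dot H^\de(\hat Q)}^2\lesssim(a(Q)+\eps^2)^{2\de}\Sob{\phi}{\dot H^\de(\hat Q)}^2$.

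For the constant term I would use $\int\til\psi\,dx=\til a(Q)$ to write $\til\phi_Q-\phi_{\hat Q}=\til a(Q)^{-1}\int\til\psi\,(\phi-\phi_{\hat Q})\,dx$, and then estimate by Cauchy--Schwarz, exploiting $0\leq\til\psi\leq1$ and $\spt\til\psi\subseteq Q(\eps)\subseteq\hat Q$ so that $\Sob{\til\psi}{L^2}\leq a(\hat Q)^{1/2}$. This yields $|\phi_{\hat Q}-\til\phi_Q|^2\lesssim \til a(Q)^{-2}a(\hat Q)\Sob{\phi-\phi_{\hat Q}}{L^2(\hat Q)}^2$, and a second application of Corollary \ref{coro:poin} on $\hat Q$ bounds $\Sob{\phi-\phi_{\hat Q}}{L^2(\hat Q)}^2$. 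Invoking the comparison $\til a(Q)\sim a(Q)$ --- the quantitative form of the hypothesis $\til\psi|_Q>0$, which is \req{equiv:cubes} in the application --- together with $a(\hat Q)\sim a(Q)+\eps^2$, the constant term is bounded by $\frac{a(Q)+\eps^2}{a(Q)}(a(Q)+\eps^2)^{2\de}\Sob{\phi}{\dot H^\de(\hat Q)}^2$, and the mild condition $\eps\lesssim a(Q)^{1/2}$ (satisfied by $\eps=h/10$) absorbs the factor $(a(Q)+\eps^2)/a(Q)$ into the implied constant. Combining the two terms yields \req{coro:poin2:ineq}.

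The step I expect to be the main obstacle is the constant term $|\phi_{\hat Q}-\til\phi_Q|$: one must argue that the weighted mean cannot drift far from the genuine mean of $\phi$, and this is precisely where the normalization $\til a(Q)\sim a(Q)$ and the bound $0\leq\til\psi\leq1$ are indispensable, since without a quantitative lower bound on the mass $\til\psi$ places over $Q$ a weighted mean concentrated near $\bdy Q(\eps)$ could fail to approximate $\phi$ on $Q$. The remaining work --- tracking the powers of $a(Q)$ and $\eps^2$, and noting that the stated inequality is effectively the case $q=2$, with other exponents following from Hölder's inequality on $Q$ at the cost of adjusting the right-hand side --- is routine once Corollary \ref{coro:poin} is available.
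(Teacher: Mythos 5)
Your proposal is correct and follows essentially the same route as the paper: split $\phi-\til{\phi}_Q$ into the oscillation about an ordinary mean plus the drift of the weighted mean from that ordinary mean, bound the drift by Cauchy--Schwarz/H\"older using $0\le\til{\psi}\le1$ and the mass comparison $\til{a}(Q)\sim a(Q)$ from \req{equiv:cubes}, then conclude with Corollary \ref{coro:poin}. If anything, your engulfing square $\hat{Q}\supseteq Q(\eps)$ makes explicit a step the paper glosses over: its own proof compares against $\phi_Q$ and arrives at an integral over $Q(\eps)$, which is not a square, so invoking Corollary \ref{coro:poin} there tacitly requires exactly the enlargement-to-a-square device you spell out.
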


\begin{proof}
First observe that
	\begin{align}\notag
		\phi_Q-\til{\phi}_Q=\frac{1}{\til{a}(Q)}\int(\phi_Q-\phi(x))\til{\psi}(x)\ dx.
	\end{align}
Then by H\"older's inequality, we have
	\begin{align}\notag
		\abs{\phi_Q-\til{\phi}_Q}^q\leq\frac{1}{\til{a}(Q)}\int\abs{\phi(x)-\phi_Q}^q\psi(x)\ dx\leq\frac{1}{\til{a}(Q)}\int_{Q(\eps)}\abs{\phi(x)-\phi_Q}^q\ dx.
	\end{align}
It follows then from Minkowski's inequality and convexity that
	\begin{align}\notag
		\int_Q|\phi(x)-\til{\phi}_Q|^q\leq C^q\left(\int_Q|\phi(x)-\phi_Q|^q\ dx+\frac{a(Q)}{\til{a}(Q)}\int_{Q(\eps)}|\phi(x)-\til{\phi}_Q|^q\ dx\right).
	\end{align}
Therefore, by \req{equiv:cubes} and Corollary \ref{coro:poin}, we obtain \req{coro:poin2:ineq}.
\end{proof}

Finally, we are ready to complete the proof of Proposition \ref{thm:Jh} when $J_h$ is given by \req{vol:elts} or \req{vol:elts:shift}.

\begin{proof}[Proof of Proposition \ref{thm:Jh}: Part II]
First suppose that $J_h=\mathcal{I}_h$ is given by \req{vol:elts}.  The case $\be=1$ follows from the classical Poincar\`e inequality, so let $\be\in(0,1)$ and $\phi\in\dot{H}_{per}^\be(\T^2)$. 
Thus, by Proposition \ref{prop:pou}, \req{orthog1}, and Corollary \ref{coro:poin2}, it follows that
	\begin{align}\label{poin:pos}
		\Sob{\phi-\mathcal{I}_h\phi}{L^2}^2\lesssim\sum_{\al\in\mathcal{J}}\Sob{\phi-\til{\phi}_{Q_\al}}{L^2(Q_{\al}(\eps))}^2\lesssim h^{2\be}\sum_{\al\in\mathcal{J}}\Sob{\phi}{\dot{H}^\be(Q_\al(\eps))}^2\sim h^{2\be}\Sob{\phi}{\dot{H}^\be}^2,
	\end{align}
which proves (Property 1.1).  To establish (Property 1.2), first observe that for $g,h\in L^1_{loc}(\T^2)$ we have
	\begin{align}\notag
		\lb(h-\til{h}_{Q_\al})\til{\psi}_\al,\til{g}_{Q_\al}\rb=\til{g}_{Q_\al}\int h(x)\psi_\al(x)\ dx-\til{h}_{Q_\al}\til{a}(Q_{\al})\til{g}_{Q_{\al}}=0,
	\end{align}
and by symmetry
	\begin{align}\notag
	\lb\til{h}_{Q_{\al}},(g-\til{g}_{Q_{\al}})\til{\psi}_\al\rb=0.
	\end{align}
It  then follows  from this and Proposition \ref{prop:pou} (ii) that for $g\in L^2_{loc}(\T^2)$, we have
	\begin{align}\label{calIh:dual}
		\lb\phi-\mathcal{I}_h\phi, g\rb&=\sum_{\al\in\mathcal{J}}\lb \phi-\til{\phi}_{Q_{\al}}, (g-\til{g}_{Q_{\al}})\til{\psi}_\al\rb=\sum_{\al\in\mathcal{J}}\lb \phi, (g-\til{g}_{Q_{\al}})\til{\psi}_\al\rb=\lb\phi, g-\mathcal{I}_hg\rb.
	\end{align}
Thus, given $g\in \dot{H}^{\be}_{per}(\T^2)$, it follows from \req{calIh:dual}, Proposition \ref{prop:pou}, and the Cauchy-Schwarz inequality that 
	\begin{align}\notag
		\abs{\lb\phi-\mathcal{I}_h\phi, g\rb}=\sum_{\al\in\mathcal{J}}\Sob{\phi}{L^2(Q_\al(\eps))}\Sob{(g-\til{g}_{Q_\al})\til{\psi}_\al}{L^2(Q_\al(\eps))}\ls\Sob{\phi}{L^2}\left(\sum_{\al\in\mathcal{J}}\Sob{(g-\til{g}_{Q_\al})\til{\psi}_\al}{L^2(Q_\al(\eps))}^2\right)^{1/2}.
	\end{align}
Then Corollary \ref{coro:poin2} implies
	\begin{align}\notag
		\Sob{g-\til{g}_{Q_\al}}{L^2(Q_\al)}^2\ls h^{2\be}\Sob{g}{H^\be(Q_\al)}^2.
	\end{align}
Therefore, by duality we have
	\begin{align}\label{calIh:result}
		\Sob{\phi-\mathcal{I}_h\phi, g}{\dot{H}^{-\be}}=\sup_{\Sob{g}{\dot{H}^\be}=1}\abs{\lb\phi-\mathcal{I}_h\phi, g\rb}\ls h^\be\Sob{\phi}{L^2},
	\end{align}
which is precisely (Property 1.2).

Now let $J_h=I_h$ be given by \req{vol:elts:shift}.  To show that (Proprety 1.1) holds, first observe that $\mathcal{I}_h1=1$ and $I_h1=0$.  Given $\phi\in L^1_{loc}(\T^2)$ such that $\lb\phi\rb=0$, it follows from the fact that $\mathcal{I}_h, I_h$ are linear and Lemma \ref{lem:Ih} (i) that
	\begin{align}\label{Ih:Ih}
		\phi-I_h\phi&=\phi-\lb\phi\rb-I_h(\phi-\lb\phi\rb)\notag\\
				&=(\phi-\mathcal{I}_h\phi)+\mathcal{I}_h\lb\phi\rb-\lb\mathcal{I}_h\lb\phi\rb\rb-\lb\phi-\mathcal{I}_h\phi\rb\notag\\
				&=(\phi-\mathcal{I}_h\phi)-\lb\phi-\mathcal{I}_h\phi\rb.
	\end{align}
Thus, (Property 1.1) for $J_h=I_h$ and $\phi\in V_\be$ follows from Minkowski's inequality, \req{avg:lp}, and \req{poin:pos}.  To see (Property 1.2) for $J_h=I_h$, simply observe that
if $\lb g\rb=0$, then \req{Ih:Ih}  implies that
	\begin{align}\label{Ih:calIh:dual}
		\lb \phi-I_h\phi, g\rb=\lb \phi-\mathcal{I}_h\phi,g\rb-\lb \lb\phi-\mathcal{I}_h\phi\rb, g\rb=\lb \phi-\mathcal{I}_h\phi, g\rb.
	\end{align}
Recall that Lemma \ref{lem:Ih} (ii) shows that $I_h\phi\in\mathcal{Z}$ for any $\phi\in L^1_{loc}(\T^2)$.  In particular, $\phi-I_h\phi\in\mathcal{Z}$.  Thus, given $\phi\in {L}^2_{per}(\T^2)\cap\mathcal{Z}$, it follows from duality and \req{Ih:calIh:dual} that
	\begin{align}\notag
		\Sob{\phi-{I}_h\phi}{\dot{H}^{-\be}}=\sup_{\substack{g\in V_\be\\ \Sob{g}{\dot{H}^\be}=1}}\abs{\lb\phi-{I}_h\phi, g\rb}=\sup_{\substack{g\in V_\be\\ \Sob{g}{\dot{H}^\be}=1}}\abs{\lb\phi-\mathcal{I}_h\phi, g\rb}.
	\end{align}
Arguing as we did for \req{calIh:result}, we have that $J_h=I_h$ satisfies (Property 1.2), as desired.
\end{proof}

\subsection{Modal projection (Type II)}\label{sect:LWP}
Here we let $J_h$ be given by projection onto Fourier modes.  The projection can be given by either rough or smooth cut-offs in the frequency side.  The ``rough projection" will be given by convolution with the square Dirichlet kernel, while the ``smooth projection" will be given by Littlewood-Paley projection.  As in the previous section, we work with rescaled variables, so that the $2\pi$-periodic box is given by $\Om=\T^2=[-\pi,\pi]^2$.

\subsubsection*{Rough modal projection}

Let $N>0$.  For $k\in\Z^2$, $k=(k_1,k_2)$,  denote by $\hat{\phi}(k)$ the $k$-th Fourier wave-number and define the ``rough modal projection" by $P_N$ by
	\begin{align}\label{modes}
		(P_N\phi)(x_1,x_2):=(D_{{N}}*\phi)(x_1,x_2),
	\end{align}
where
	\begin{align}\label{dirichlet}
		D_{{N}}(x_1,x_2):=\sum_{\abs{k_1}\leq {N}}\sum_{\abs{k_2}\leq{N}}e^{ik\cdotp x},
	\end{align}
is the two-dimensional Dirichlet kernel.  In particular, we have
	\begin{align}
		D_N(x_1,x_2)=D_N(x_1)D_N(x_2),\quad D_N(x)=\begin{cases}\frac{\sin((N+1/2)x)}{\sin(x/2)}, & x\in\T^2\smod\{\mathbf{0}\},\\
			2N+1,& x=\mathbf{0}.
		\end{cases}
	\end{align}
	
Let us now prove Proposition \ref{thm:Jh} with
	\begin{align}\label{rough:proj}
		J_h:=P_N,\quad h=(2\pi)/{N},\quad N\geq16.%(J_h\phi)(x):=\sum_{\substack{\abs{k_j}\leq {N}\\ j=1,2}}\hat{\phi}(k)e^{ik\cdotp x}.
	\end{align}

\begin{proof}[Proof of Proposition \ref{thm:Jh}]
It is classical that $J_h$ defined by \req{rough:proj} this way satisfies (Property 0.1) with constant independent of $h$ (cf. \cite{graf}).  One also has the following estimate on the Dirichlet kernel for $q\in(1,\infty)$ (cf. \cite{graf}):
	\begin{align}\label{dirichlet:est}
		\Sob{D_N}{L^q(\T^2)}\sim (2{N}+1)^{2/q'},
	\end{align}
where $q,q'$ are H\"older conjugates and the suppressed constant depends on $q$.

To show that $P_N$ satisfies (Property 0.2), we apply Young's convolution inequality and \req{dirichlet:est} to obtain
	\begin{align}
		\Sob{P_N\phi}{L^p}^p&\leq \Sob{D_N}{L^{2p/(p+2)}}^p\Sob{\phi}{L^2}^p\lesssim(2{N}+1)^{p-2}\Sob{\phi}{L^2}^p\lesssim h^{2-p}\Sob{\phi}{L^2}^p.\notag
	\end{align}

To prove property (Property 0.3), we apply Plancherel and estimate as follows
	\begin{align}\notag
		\Sob{P_N\phi}{\dot{H}^\al}^2\leq CN^{2\al}\sum_{\abs{k_j}\leq {N}}\abs{\hat{\phi}(k)}^2=C N^{2\al}\Sob{\phi}{L^2}\leq Ch^{-2\al}\Sob{\phi}{L^2}.
	\end{align}

Clearly, for $\be\geq0$, $\Lam^\be I_h=I_h\Lam^\be$ by the Plancherel theorem, which proves property (Property 2.2).

To prove property (Property 2.1), let $\be>\al$.  Then from $(v)$, the Cauchy-Schwarz inequality, and the Plancherel theorem, it follows that
	\begin{align}\notag
		\Sob{\phi-I_h\phi}{\dot{H}^\al}^2\lesssim\sum_{\abs{k}>N}\abs{k}^{2\al}\abs{\hat{\phi}(k)}^2\lesssim\sum_{\abs{k}>{N}}\abs{k}^{2(\al-\be)}\abs{k}^{2\be}\abs{\hat{\phi}(k)}^2\lesssim N^{\al-\be}\Sob{\Lam^\be\phi}{L^2}^2\lesssim h^{\be-\al}\Sob{\phi}{\dot{H}^\be}^2.
	\end{align}
\end{proof}

\subsubsection*{Smooth modal projection}\label{sect:LWP}
We define this projection  by the Littlewood-Paley decomposition.  We presently give a brief review of this decomposition.  More thorough treatments can be found in \cite{bcd, danchin:notes, runst:sick, workman}.  We state the decomposition for ${\R^2}$ for convenience, but point out that it is also valid in the case $\T^2$ for periodic distributions.  In particular, the Bernstein inequality (Proposition \ref{bern}) stated below also hold in $\T^2$ provided that that one redefine the Littlewood-Paley blocks, $\lpk$, in a suitable manner (cf. \cite{danchin:notes}).

Let $\psi_0$ be a smooth, radial bump function such that $\psi_0(\xi)=1$ when $[\abs{\xi}\leq 1/4]\sub{\R^2}$, and
	\begin{align}\label{psi:0}
		0\leq\psi_0\leq1\ \text{and}\ \spt\psi_0=[\abs{\xi}\leq 1/2].
	\end{align}
Define ${\phi}_0(\xi):=\psi_0(\xi/2)-\psi_0(\xi)$.  Observe that
	\begin{align}\notag
		0\leq{\phi}_0\leq 1\ \text{and}\ \spt{\phi}_0=[1/4\leq\abs{\xi}\leq1]
	\end{align}
Now for each integer $j\geq0$, define
	\begin{align}\label{psi:phi}
		{\phi}_j(\xi):={\phi}_0(\xi2^{-j}).
	\end{align}
Then, in view of the above definitions, we clearly have
	\begin{align}\label{lp:loc}
		\spt{\phi}_j=[2^{j-2}\leq\abs{\xi}\leq2^{j}].
	\end{align}
If we let ${\phi}_{-1}:=\psi_0$ and ${\phi}_{j}\equiv0$ for $j<-1$, observe that
	\begin{align}\label{pou}
		\sum_{j\in\Z}{\phi}_j(\xi)=1,\ \text{for }\ \xi\in{\R^2}.
	\end{align}
One can then define
	\begin{align}\label{lp:block:def}
		\lpk g:={\phi}_k*g, \quad \til{\lp}_kg:=\sum_{|k-\ell|\leq 2}\lpl g,\quad S_kg:=\sum_{\ell\leq k}\lpl g,\quad T_k:=I-S_k,
	\end{align}
where ${\phi}_k:=\check{\phi}_k$ is the inverse Fourier transform of ${\phi}_k$.  We call the operators, $\lpk$, Littlewood-Paley projections.

One can show that \req{pou} implies that
	\begin{align}\label{lp:dc}
		g=\sum_{j\geq-1}\lpj g\quad\text{for all}\ g\in \Sch'({\R^2}),
	\end{align}
where $\Sch'({\R^2})$ is the space of tempered distributions over ${\R^2}$.

For $N>0$, define $J_h$ by
	\begin{align}\label{smooth:proj}
		J_h:=S_N,\quad h=2^{-N}.
	\end{align}
 That (Property 0.1)-(Property 0.3) and (Property 2.1) and (Property 2.2) are satisfied by $I_h$ defined in this way follows from the Bernstein inequalities (cf. \cite{bcd}).

\begin{prop}[Bernstein inequalities]\label{bern}
Let $1\leq p\leq q\leq\infty$ and $g\in\Sch'({\R^2})$.  Then for $\be\in\R$ and $j\geq-1$ we have
	\begin{align}\notag
		\begin{split}
		&\Sob{\Lam^\be \lpj g}{L^q}\sim2^{j\be}\Sob{\lpj g}{L^p},\quad \Sob{\lpj g}{L^q}\lesssim 2^{2j(1/p-1/q)}\Sob{\lpj g}{L^p}.
		\end{split}
	\end{align}
For $\s\geq0$ and $j\geq-1$, we have
	\begin{align}\notag
		 \Sob{\Lam^\be S_jg}{L^q}\lesssim 2^{\be j+2j(1/p-1/q)}\Sob{S_j g}{L^p},
	\end{align}
where the suppressed absolute constants depend only on $\be, \check{\phi}_0, \check{\psi}_0$.
\end{prop}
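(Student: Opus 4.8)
The plan is to derive all four inequalities from a single mechanism: each operator on the left is a Fourier multiplier whose symbol is a dyadic rescaling $m(2^{-j}\,\cdot\,)$ of one fixed function $m$, so that the associated convolution kernel obeys a precise $L^r$ scaling law, after which Young's convolution inequality produces exactly the stated power of $2^j$. Since the Littlewood--Paley calculus on $\T^2$ mirrors that on ${\R^2}$ (as noted just before the statement), I would carry out the estimates on ${\R^2}$.

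I would begin with the frequency-localized blocks. Fix once and for all a smooth radial ``retract'' $\Phi$ built from finitely many dilates of $\phi_0$ (the $\til\lp$-trick), supported in an enlarged annulus and equal to $1$ on $\spt{\phi}_0$; by \req{lp:loc}, $\Phi(2^{-j}\,\cdot\,)\equiv1$ on the Fourier support of $\lpj g$, so $\lpj g=\Phi_j*\lpj g$ with $\Phi_j(x)=2^{2j}\check\Phi(2^jx)$ and $\check\Phi\in\Sch({\R^2})$. For the embedding $\Sob{\lpj g}{L^q}\ls 2^{2j(1/p-1/q)}\Sob{\lpj g}{L^p}$ I would apply Young's inequality $L^r*L^p\to L^q$ with $1/r=1-(1/p-1/q)$, using the scaling identity $\Sob{\Phi_j}{L^r}=2^{2j(1-1/r)}\Sob{\check\Phi}{L^r}=2^{2j(1/p-1/q)}\Sob{\check\Phi}{L^r}$. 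For the derivative relation I would write $\Lam^\be\lpj g=2^{j\be}\,M_j*\lpj g$, where $\widehat{M_j}(\xi)=m(2^{-j}\xi)$ with $m(\eta):=|\eta|^\be\Phi(\eta)$; since $\spt\Phi$ avoids the origin, $m$ is smooth and compactly supported, hence $\check m\in\Sch({\R^2})\subset L^1$ and $\Sob{M_j}{L^1}=\Sob{\check m}{L^1}$ is independent of $j$. Young's inequality $L^1*L^p\to L^p$ gives $\Sob{\Lam^\be\lpj g}{L^p}\ls 2^{j\be}\Sob{\lpj g}{L^p}$, and running the same argument with the symbol $|\xi|^{-\be}$ (also smooth on the annulus) yields the reverse bound, hence the equivalence $\Sob{\Lam^\be\lpj g}{L^p}\sim 2^{j\be}\Sob{\lpj g}{L^p}$. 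The stated $L^q$-form then follows by composing this $L^p$-equivalence with the embedding.

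For the final inequality I would first record, by telescoping the partition of unity \req{pou}, that $\widehat{S_jg}=\psi_0(2^{-(j+1)}\,\cdot\,)\,\hat g$, so that $S_jg$ has Fourier support in the \emph{ball} $\{|\xi|\le2^j\}$. Choosing a smooth radial $\Psi$ built from $\psi_0$ that equals $1$ there, I would write $\Lam^\be S_jg=2^{j\be}\,N_j*S_jg$ with $\widehat{N_j}(\xi)=n(2^{-j}\xi)$, $n(\eta):=|\eta|^\be\Psi(\eta)$, and then invoke Young's inequality $L^r*L^p\to L^q$ exactly as above, producing the combined factor $2^{j\be+2j(1/p-1/q)}$ provided $\check n\in L^r$.

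The main obstacle is precisely this last point: unlike the annular blocks, $\Psi$ cannot avoid the origin, so $n(\eta)=|\eta|^\be\Psi(\eta)$ fails to be smooth at $\eta=0$ and the ``smooth compactly supported symbol $\Rightarrow$ Schwartz kernel'' shortcut is unavailable. Here I would use the hypothesis $\be\ge0$: viewing $n$ as a compactly supported cutoff of the homogeneous distribution $|\eta|^\be$, whose inverse transform is (up to a constant) $|x|^{-2-\be}$, one finds that $\check n$ is bounded near the origin and decays like $|x|^{-2-\be}$ at infinity, which for $\be>0$ is integrable on ${\R^2}$ while $\be=0$ is the trivial smooth case. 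Hence $\check n\in L^1\cap L^\infty\subset L^r$ for every $r\in[1,\infty]$, with constant depending only on $\be$ and $\Psi$, i.e.\ on $\check\psi_0$. This singular-symbol estimate at low frequency is the only nontrivial ingredient; everything else is the scaling-plus-Young template above, with all constants traceable to $\phi_0,\psi_0$ through $\Phi,\Psi$, in agreement with the claimed dependence on $\check\phi_0,\check\psi_0$.
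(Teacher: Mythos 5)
The paper itself offers no proof of Proposition \ref{bern}: it is quoted from \cite{bcd}, so there is no internal argument to compare against. Your proof is the standard one (and is essentially the proof in \cite{bcd}): realize each operator as convolution with a dyadically rescaled kernel and apply Young's inequality, the scaling identity $\Sob{\Phi_j}{L^r}=2^{2j(1/p-1/q)}\Sob{\check\Phi}{L^r}$ producing exactly the stated powers of $2^j$. Your exponent arithmetic is correct; the retract $\Phi=\phi_0(2\cdot)+\phi_0+\phi_0(\cdot/2)=\psi_0(\cdot/4)-\psi_0(2\cdot)$ is indeed supported in an annulus avoiding the origin and equals $1$ on $\spt\phi_0$; and you correctly read the display's ``$\sim$'' (which as written mixes $L^q$ and $L^p$) as the same-exponent equivalence composed with the one-sided embedding, since no reverse embedding can hold on $\R^2$. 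You also isolate the one genuinely delicate point: for $S_j$ the symbol $n(\eta)=\abs{\eta}^\be\Psi(\eta)$ is singular at the origin, and $\be\geq0$ is what makes $\check n\in L^1\cap L^\infty$. Your justification of the decay $\abs{\check n(x)}\ls\abs{x}^{-2-\be}$ is informal: the distributional Fourier transform of $\abs{\eta}^\be$ is \emph{not} $c_\be\abs{x}^{-2-\be}$ when $\be$ is an even integer (it is a derivative of the delta function, though then $n$ is smooth and the claim is trivial), and for the remaining $\be$ a cleaner rigorous route is to decompose $n$ dyadically about the origin, sum the kernel bounds $\abs{\check n_k(x)}\ls_N 2^{k(\be+2)}(2^k\abs{x})^{-N}$, and conclude the same decay. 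This is a presentational weakness, not a wrong step.

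The one genuine gap is the block $j=-1$. The proposition asserts the first display for all $j\geq-1$, but $\lp_{-1}g$ has Fourier support in the ball $[\abs{\xi}\leq1/2]$, not an annulus, so no retract $\Phi(2^{-j}\cdot)$ supported away from the origin exists and your two-sided argument does not apply there. Indeed the equivalence is false at $j=-1$: the reverse bound $2^{-\be}\Sob{\lp_{-1}g}{L^p}\ls\Sob{\Lam^\be\lp_{-1}g}{L^p}$ fails for $\be>0$ (take $\hat g$ concentrated near $\abs{\xi}=\eps$ and let $\eps\to0$), and for $\be<0$ even the forward bound is problematic since the kernel of $\abs{\xi}^\be\Psi(\xi)$ is no longer integrable. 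The correct statement — and the one actually used in the paper's Appendix — keeps the two-sided equivalence only for $j\geq0$ and, for the ball-supported pieces ($j=-1$ and $S_j$), only the upper bound with $\be\geq0$, which follows from exactly the singular-symbol argument you develop for $S_j$. You should say this explicitly: as written, your proof silently covers only $j\geq0$ in the first display, and completing it requires amending the statement rather than supplying a missing estimate.
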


Indeed, let us prove Proposition \ref{thm:Jh} for $J_h=S_N$ given by \req{smooth:proj}.

\begin{proof}[Proof of Proposition \ref{thm:Jh}]

(Property 0.1) and (Property 0.2) follow immediately from the Bernstein inequalities.  For property (Property 0.3), simply observe that for $\be\geq0$, we have
	\begin{align}\notag
		\Sob{S_N\phi}{H^\be}^2\lesssim2^{2\be N}\Sob{\phi}{L^2}^2.
	\end{align}
To prove property (Property 2.1), observe that for $\al,\be\in\R$, with $\be\geq\al$, we have
	\begin{align}
		\Sob{\phi-S_N\phi}{H^\al}^2\sim\sum_{j\geq N+1}2^{-2j(\be-\al)}2^{2\be j}\Sob{\lpj \phi}{L^2}^2\lesssim 2^{-2N(\be-\al)}\Sob{\phi}{\dot{H}^\be}^2.\notag
	\end{align}
Also, (Property 2.2) holds simply by applying the Fourier convolution theorem.  Therefore, $J_h$ given by \req{smooth:proj} is of Type II.
\end{proof}

\subsection*{\bf Acknowledgments}  The authors would like to thank the Institute of Pure and Applied Mathematics at UCLA, where part of this work was performed.  The authors would also like to thank Cecilia Mondaini for insightful discussions in the course of this work.  M.S Jolly was supported by NSF grant DMS-1418911 and the Leverhulme Trust grant VP1-2015-036.  The work of E.S.T. was supported in part by the ONR grant N00014-15-1-2333 and the NSF grants DMS-1109640 and DMS-1109645.

\bibliographystyle{plain}

\begin{thebibliography}{100}
\addcontentsline{toc}{chapter}{Bibliography}
%\bibitem{bae}
%H.~Bae.
%\newblock Global well-posedness of dissipative quasi-geostrophic equations in critical spaces.
%\newblock {\it Proceedings of the American Mathematical Society}, 136(1), 257-261, 2008.
%
%\bibitem{bae:bis:gev}
%H.~Bae and A.~Biswas.
%\newblock Gevrey regularity for a class of dissipative equations with analytic nonlinearity.
%\newblock {\it submitted}, pp. 1-30, August 2013.
%
%%\bibitem{bae:bis:tad1}
%%H.~Bae, A.~Biswas, E.~Tadmor.
%%\newblock Analyticity of the subcritical and critical quasi-geostrophic equations in Besov spaces.
%%\newblock {\underline{arXiv.1310.1624v1}}, pp. 1-20, October 6, 2013.
%
%\bibitem{bae:bis:tad2}
%H.~Bae, A.~Biswas, E.~Tadmor.
%\newblock Analyticity and decay estimates of the {N}avier-{S}tokes equations in critical Besov spaces.
%\newblock {\it Arch. Ration. Mech. Anal}, 3, 963-991, 2012.

\bibitem{adams}
Adams,~R.A. and Fournier,~J.J. (2003)
\newblock Sobolev Spaces
\newblock {\it Academic press}, Vol. 140.

\bibitem{alb:lopes:titi}
Albanez,~D.A.F., Nussenzveig Lopes,~H.J., and Titi,~E.S. (2016).
\newblock Continuous data assimilation for the three-dimensional {N}avier-{S}tokes-$\alpha$ model.
\newblock {\it Asymptotic Anal.}, 97(1-2), 139-164.

\bibitem{auroux:bansart:blum}
Auroux,~D., Bansart,~P., and Blum, J. (2013).
\newblock An evolution of the back and forth nudging for geophysical data assimilation: application to Burgers equation and comparisons.
\newblock {\it Inverse Probl. Sci. Eng.}, 21(3), 399-410.

\bibitem{azouani:titi}
Azouani,~A. and Titi,~E.S. (2014).
\newblock Feedback control of nonlinear dissipative systems by finite determining parameters-a reaction diffusion paradigm.
\newblock {\it Evol. Equ. Control Theory}, 3(4), 579-594.

\bibitem{azouani:olson:titi}
Azouani,~A., Olson,~E., and Titi,~E.S. (2014).
\newblock Continuous data assimilation using general interpolant observables.
\newblock {\it J. Nonlinear Sci.}, 24(2), 277-304.

\bibitem{bcd}
Bahouri,~H., Chemin,~J.Y., and Danchin,~R. (2011).
\newblock Fourier {A}nalysis and {N}onlinear {P}artial {D}ifferential {E}quations.
\newblock {\it Grundlehren der Mathematischen Wissenschaften}, Springer-Verlag, 343.


%\bibitem{bfjr}
%N.~Balci, C.~Foias, M.S.~Jolly, and R.~Rosa.
%\newblock On universal relations in 2-{D} turbulence.
%\newblock {\it Discrete and Continuous Dynamical Systems}, 2(4):1327--1351, August 2010.

%\bibitem{dan}
%R.~Danchin
%\newblock Fourier Methods in PDEs.
%\newblock {\it Lecture notes}, November, 14 2005.

\bibitem{qg:spectra:experiments}
Baroud,~C.N., Plapp,~B.B., She,~Z.S., and H.L.~Swinney. (2002).
\newblock Anomalous self-similarity in a turbulent rapidly rotating fluid.
\newblock {\it Phys. Rev. Lett.} 88, 114501.

\bibitem{benyi:oh}
B\'enyi,~\'A. and Oh,~T. (2013).
\newblock The Sobolev inequality on the torus revisited.
\newblock {\it Publi. Math. Debrecen}, 3, 359-374.

\bibitem{bessaih:olson:titi}
Bessaih,~H., Olson,~E., and Titi,~E.S. (2015).
\newblock Continuous assimilation of data with stochastic noise.
\newblock {\it Nonlinearity}, 28, 729--753.
%
%\bibitem{bart:gibb}
%M.V.~Bartucelli and J.D.~Gibbon.
%\newblock Sharp constants in the {S}obolev embedding theorem and a derivation of the {B}rezis-Gallouet interpolation inequality.
%\newblock {\it Journal of Mathematical Physics}, 52, pp. 1-9, 2001.
%
%\bibitem{bart:gibb:mal}
%M.V.~Bartucelli, J.D.~Gibbon, and S.J.A.~Malham.
%\newblock Length scales in solutions of the {N}avier-{S}tokes equations.
%\newblock {\it Nonlinearity}, 6, 549-568, 1993.
%
%\bibitem{bis2}
%A.~Biswas.
%\newblock Gevrey regularity for a class of dissipative equations with
%  applications to decay.
%\newblock {\it Journal of Differential Equations}, 253:2739--2764, 2012.
%
%\bibitem{biswas:qg}
%A.~Biswas.
%\newblock Gevrey regularity for the supercritical quasi-geostrophic equation.
%\newblock {\underline{arxiv.1312.5960v1}}, pp. 1-19, December 13, 2013.
%
%\bibitem{bs2}
%A.~Biswas and D.~Swanson.
%\newblock Existence and generalized {G}evrey regularity of solutions to the
%  {K}uramoto-{S}ivashinsky equation in $\mathbb{R}^n$.
%\newblock {\it Journal of Differential Equations}, 240(1):145--163, 2007.
%
%\bibitem{bis}
%A.~Biswas and D.~Swanson.
%\newblock Gevrey regularity to the {3-D} {N}avier-{S}tokes equations with
%  weighted $\ell^p$ initial data.
%\newblock {\it Indiana University Mathematics Journal}, 56(3):1157--1188, 2007.

\bibitem{blsz}
Bloemker,~D., Law,~K.J.H., Stuart,~A.M., and Zygalakis,~K. (2013).
\newblock Accuracy and stability of the continuous-time 3DVAR filter for the Navier-Stokes equation.
\newblock {\it Nonlinearity}, 26, 2193-2219.

\bibitem{bllmss}
Brett,~C.E.A., Lam,~K.F., Law,~K.J.H., McCormick,~D.S., Scott,~M.R., Stuart,~A.M. (2012).
\newblock Accuracy and stability of filters for dissipative PDEs.
\newblock {\it Phys. D}, 245, 34-45.

\bibitem{caff:vass}
Caffarelli,~L. and Vasseur,~A. (2010).
\newblock Drift diffusion equations with fractional diffusion and the quasi-geostrophic equation.
\newblock {\it Annals of Math. }, 171(3), 1903-1930.

\bibitem{carrillo:ferreira}
Carrillo,~J.A. and Ferreira,~C.F. (2008)
\newblock The asymptotic behaviour of subcritical dissipative quasi-geostrophic equations.
\newblock {\it Nonlinearity},  21, 1001-1018.

%\bibitem{chae:lee}
%D.~Chae and J.~Lee.
%\newblock Global {W}ell-{P}osedness in the {S}uper-{C}ritical {D}issipative {Q}uasi-{G}eostrophic {E}quations.
%\newblock {\it Communications in Mathematical Physics}, 233, 297-311, 2003.

\bibitem{chj}
Charney,~J., Halem,~M., and Jastrow,~R. (1969).
\newblock Use of incomplete historical data to infer the present state of the atmosphere.
\newblock {\it J. Atmos. Sci.} 26:1160-1163.

%\bibitem{cmz}
%Q.~Chen, C.~Miao, Z.~Zhang.
%\newblock A new bernstein's {i}nequality and the 2{D} {d}issipative {q}uasi-{g}eostrophic {e}quation.
%\newblock {\it Communications in Mathematical Physics}, 271, 821-838, 2007.

\bibitem{chesk:dai:qgattract}
Cheskidov,~A. and Dai,~M. (2015).
\newblock The existence of a global attractor for the forced critical surface quasi-geostrophic equation in $L^2$.
\newblock {\underline{arXiv.1402.4801v3}}, 1-14 (submitted).

\bibitem{chesk:dai:subcritqg}
Cheskidov,~A. and Dai,~M. (2015).
\newblock On the determining wavenumber for the nonautonomous subcritical {S}{Q}{G} equation.
\newblock {\underline{arXiv.1508.07943v1}}, 1-14 (submitted).

%\bibitem{chesk:dai:3Ddm}
%A.~Cheskidov and M.~Dai.
%\newblock Kolmogorov's dissipation number and the number of degrees of freedom for the 3{D} {N}avier-{S}tokes equations.
%\newblock {\underline{arXiv.1510.00379v1}}, pp. 1-15, October 1, 2015.

\bibitem{cjt1}
Cockburn,~B., Jones,~D., and Titi,~E.S. (1995).
\newblock Determining degrees of freedom for nonlinear dissipative equations.
\newblock {\it C.R. Acad. Sci. Paris S\'er. I Math}, 321:563-568.

\bibitem{cjt2}
Cockburn,~B., Jones,~D., and Titi,~E.S. (1997)
\newblock Estimating the number of asymptotic degrees of freedom for nonlinear dissipative systems.
\newblock {\it Math. Comput.}, 66, 1073-1087.

%\bibitem{coif:mey}
%R.~Coifman and Y.~Meyer.
%\newblock Commutateurs d'int\'egrales singuli\`eres et op\'erateurs multiline\'eaires.
%\newblock {\it Ann. Inst. Fourier}, 28(3), 177-202, 1978.

\bibitem{const:qg:spectra}
Constantin,~P.
\newblock Energy spectrum of quasigeostrophic turbulence. (2002).
\newblock {Phys. Rev. Lett.}, 89(18), 184501-4.

\bibitem{const:coti:vic}
Constantin,~P., Coti-Zelati,~M., and Vicol,~V. (2016).
\newblock Uniformly attracting limit sets for the critically dissipative {S}{Q}{G} equation.
\newblock {\it Nonlinearity}, 29, 298-318.

\bibitem{const:gh:vic}
Constantin,~P., Glatt-Holtz,~N., and Vicol,~V. (2014).
\newblock Unique ergodicity for fractionally dissipated, stochastically forced 2D Euler equations.
\newblock {\it{Commun. Math. Phys.}}, 819-857.


\bibitem{const:vic}
Constantin,~P. and V.~Vicol. (2012).
\newblock Nonlinear maximum principles for dissipative linear nonlocal operators and applications.
\newblock {Geom. Funct. Anal.}, 22(5):1289-1321.

%\bibitem{const:wu:qgholder}
%P.~Constantin and J.~Wu.
%\newblock Regularity of {H}\"older continuous solutions of the supercritical quasi-geostrophic equation.
%\newblock {\it Ann. Inst. H. Poincare Anal. Non Lineaire}, 6:2681-2692, 2008.

\bibitem{const:wu:qgwksol}
Constantin,~P. and Wu,~J. (1999).
\newblock Behavior of solutions of 2{D} quasi-geostrophic equations.
\newblock {\it Siam J. Math Anal.}, 30(5), 937-948.

%\bibitem{const:wu:qgholder}
%P.~Constantin and J.~Wu.
%\newblock Regularity of {H}\"older continuous solutions of the supercritical quasi-geostrophic equation.
%\newblock {\it Ann. Inst. H. Poincare Anal. Non Lineaire}, 6:2681-2692, 2008.

%\bibitem{ccw}
%P.~Constantin, D.~C\'ordoba, and J.~Wu.
%\newblock On the critical dissipative quasi-geostrophic equation.
%\newblock {\it Indiana University Mathematics Journal}, 50(1):97--107, 2001.

\bibitem{cmt}
Constantin,~P., Majda,~A., Tabak,~E. (1994).
\newblock Formation of strong fronts in the 2-{D} quasigeostrophic thermal active scalar.
\newblock {\it Nonlinearity}, 7, 1495-1533.

\bibitem{ctv}
Constantin,~P., Tarfulea,~A., and Vicol,~V. (2014).
\newblock Long time dynamics of forced critical SQG.
\newblock {\it Commun. Math. Phys.},  335(1), 93-141.

\bibitem{coti:zelati}
Coti-Zelati,~M. (2015)
\newblock Long time behavior of subcritical {S}{Q}{G} in scale-invariant spaces.
\newblock {\underline{arXiv.1512.00497}}, 1-13 (submitted).

\bibitem{cz:vic}
Coti-Zelati,~M. and Vicol,~V. (2016).
\newblock On the global regularity for the supercritical SQG equation.
\newblock {\it Indiana Univ. Math. J.}, 65, 535-552.


%Indiana University Mathematics Journal 65 (2016), 535-552. ,
%\bibitem{const:tarf:vic2}
%P.~Constantin, A.~Tarfulea, and V.~Vicol.
%\newblock Absence of anomalous dissipation of energy in forced two dimensional fluid equations.
%\newblock {\it Arc. Rational. Mech. Anal.}, 212, 875-903, 2014.

%\bibitem{cor:sqg}
%D.~C\'ordoba.
%\newblock Nonexistence of simple hyperbolic blow-up for the quasi-geostrophic equation
%\newblock {\it Annals of Mathematics}, 148, 1135-1152, 1998.

%\bibitem{cor:cor}
%A.~C\'ordoba and D.~C\'ordoba.
%\newblock A {M}aximum {P}rinciple {A}pplied to {Q}uasi-{G}eostrophic {E}quations.
%\newblock {\it Communications in Mathematical Physics}, 249, 511-528, 2004.

%\bibitem{dab}
%M.~Dabkowski.
%\newblock Eventual regularity of the solutions to the supercritical dissipative quasi-geostrophic equation.
%\newblock {\it Geometric and Functional Analysis}, 21, 1-13, 2011.

%\bibitem{dab:kis:sil:vic}
%M.~Dabkowski, A.~Kiselev, L.~Silvestre, and V.~Vicol.
%\newblock Global well-posedness of slightly supercritical active scalar equations.
%\newblock {\underline {arXiv:1203.6302v1}}, pp. 1-26, March 28, 2012.

%\bibitem{dfj1}
%R.~Dascaliuc, C.~Foias, and M.S. Jolly.
%\newblock Some specific mathematical constraints on {2-D} turbulence.
%\newblock {\it Phys. D: Nonlinear Phenomena}, 237(23):3020--3029, 2008.
%
%\bibitem{dfj2}
%R.~Dascaliuc, C.~Foias, and M.S. Jolly.
%\newblock On the asymptotic behavior of average energy and enstrophy in {3D}
%  turbulent flows.
%\newblock {\it Phys. D}, 238:725--736, 2009.
%
%\bibitem{doergibb}
%C.R. Doering and J.D. Gibbon.
%\newblock Applied analysis of the {N}avier-{S}tokes equations.
%\newblock {\it Cambridge Texts in Applied Mathematics}, 1967.

%\bibitem{dt}
%C.~Doering and E.S.~Titi.
%\newblock Exponential decay rate of the power spectrum for solutions of the
%  {N}avier-{S}tokes equations.
%\newblock {\it Phy. Fluids}, 7(6):1384--1390, June 1995.

\bibitem{danchin:notes}
Danchin,~R. (2005).
\newblock {Fourier Analysis Methods for PDEs}.
\newblock {Lecture notes}.


\bibitem{desj:gren98}
Desjardins,~B. and Grenier,~E. (1998).
\newblock Derivation of quasi-geostrophic potential vorticity equations.
\newblock {\it Adv. Differ. Equ.}, 3(5), 715--752.

\bibitem{hdong}
Dong,~H. (2010).
\newblock {D}issipative quasi-geostrophic equations in critical {S}obolev
  spaces: smoothing effect and global well-posedness.
\newblock {\it Discrete Contin. Dyn. Syst. Series A}, 26(4):1197--1211.

%\bibitem{dong:du}
%H.~Dong and D.~Du.
%\newblock Global well-posedness and a decay estimate for the critical dissipative quasi-geostrophic equation in the whole space.
%\newblock {\underline{arXiv.math/0701828v2}}, pp. 1-9, February 13, 2007.

%\bibitem{dong:li:crit:besov}
%H.~Dong and D.~Li.
%\newblock On the 2{D} critical and supercritical dissipative quasi-geostrophic equation in {B}esov spaces.
%\newblock {\it J. Differential Equations}, 248, 2684-2702, 2010.

%\bibitem{dongdong}
%H.~Dong and D.~Li.
%\newblock {S}patial analyticity of the solutions to the subcritical dissipative
%  quasi-geostrophic equations.
%\newblock {\it Arch. Rational Mech. Anal.}, 189:131--158, 2008.

%\bibitem{fet}
%A.~B. Ferrari and E.S. Titi.
%\newblock {G}evrey regularity for nonlinear analytic parabolic equations.
%\newblock {\it Communications in Partial Differential Equations},
%  23(1\&2):1--16, 1998.
%
%\bibitem{ft}
%C.~Foias and R.~Temam.
%\newblock Gevrey class regularity for the solutions of the {N}avier-{S}tokes
%  equations,
%\newblock {\it Journal of Functional Analysis}, 87:350--369, 1989.
%
%
%\bibitem{foias}
%C.~Foias.
%\newblock What do {N}avier-{S}tokes equations tell us about turbulence?
%\newblock {\it Contemporary Mathematics}, 208:151--180, 1995.
%
%\bibitem{foias:prodi}
%C.~Foias and G.~Prodi.
%\newblock Sur les solutions statistiques des \'equations de {N}avier-{S}tokes.
%\newblock {\it Ann. Mat. Pura Appl.}, 11(4):307--330, 1976.
%
%\bibitem{ft}
%C.~Foias and R.~Temam.
%\newblock Gevrey class regularity for the solutions of the {N}avier-{S}tokes
%  equations.
%\newblock {\it Journal of Functional Analysis}, 87:350--369, 1989.
%
%\bibitem{fjmr}
%C.~Foias, M.S.~Jolly, O.P.~Manley, and R.~Rosa.
%\newblock Statistical estimates for the {N}avier-{S}tokes equations and the
%  {K}raichnan theory of 2-{D} fully developed turbulence.
%\newblock {\it Journal of Statistical Physics}, 108:591--645, 2002.
%
%\bibitem{fmrt:book}
%C.~Foias, O.P.~Manley, R.~Rosa, and R.~Temam.
%\newblock Navier-{S}tokes equations and turbulence.
%\newblock {\it Encyclopedia of Mathematics and its Applications}, 83, 169-198, 2001.
%
%\bibitem{fjmrt}
%C.~Foias, M.S.~Jolly, O.P. Manley, R.~Rosa, and R.~Temam.
%\newblock Kolmogorov theory via finite-time averages.
%\newblock {\it Phys. D}, 212(3-4):245--270, December 2005.
%
%
%
%\bibitem{frisch}
%U.~Frisch.
%\newblock Turbulence: {T}he {L}egacy of {A}.{N}. {K}olmogorov.
%\newblock {\it Cambridge University Press}, 1995.
%
%
%\bibitem{fujita:kato}
%H.~Fujita and T.~Kato.
%\newblock On the {N}avier-{S}tokes initial value problem. {I}.
%\newblock {\it Arch. Ration. Mech. Anal}, 16(4), 269-315, 1989.
%
%\bibitem{guotiti1}
%P.~G\'erard, Y.~Guo, and E.S. Titi.
%\newblock On the radius of analyticity of solutions to the cubic {S}zeg\"o equation.
%\newblock {Annales de l'Institut Henri Poincar\'{e}}, (C) Analyse Non Lin\'{e}aire, (to appear).
%
%\bibitem{gibb:titi}
%J.D.~Gibbon and E.S.~Titi.
%\newblock Attractor dimension and small length scale estimates for the three-dimensional {N}avier-{S}tokes equations.
%\newblock {\it Nonlinearity}, 10, 109-119, 1997.
%
%\bibitem{guotiti2}
%Y.~Guo and E.S. Titi.
%\newblock Persistency of analyticity for quasi-linear wave-equations: an
%  energy-like approach.
%\newblock {\it Bulletin  of Institute of Mathematics, Academia Sinica (N.S.)}, pp. 1-27, 2013.

\bibitem{farhat:jolly:titi}
Farhat,~A., Jolly~M.S., and Titi,~E.S. (2015).
\newblock Continuous data assimilation for the 2D B\'enard convection through velocity measurements alone.
\newblock {\it Phys. D}, 303, 59-66.

\bibitem{farhat:lunasin:titi1}
Farhat,~A., Lunasin,~E., and Titi,~E.S. (2016).
\newblock Abridged continuous data assimilation for the 2D Navier-Stokes Equations Utilizing Measurements of Only One Component of the Velocity Field.
\newblock {\it J. Math. Fluid Mech.}, 18:1, 1-23.

%A. Farhat, E. Lunasin and E. Titi, \emph{Data Assimilation algorithm for 3D B\' enard convection in porous media employing only temperature measurements}, J. Math. Anal. Appl.,
\bibitem{farhat:lunasin:titi2}
Farhat,~A., Lunasin,~E., and Titi,~E.S. (2016).
\newblock Data Assimilation algorithm for 3D B\'enard convection in porous media employing only temperature measurements.
\newblock {\it J. Math. Anal. Appl.}, 438(1), 492-506.

\bibitem{farhat:lunasin:titi3}
Farhat,~A., Lunasin,~E., and Titi,~E.S. (2016).
\newblock On the Charney conjecture of data assimilation employing temperature measurements alone: The paradigm of 3D planetary geostrophic model.
\newblock {\it Math. of Climate and Wea. Forecasting}, (to appear).


\bibitem{foias:prodi}
C.~Foias and G.~Prodi. (1967).
\newblock Sur le comportement global des solutions non stationnaires de \`equations de {N}avier-{S}tokes en dimension deux.
\newblock {\it Rend. Sem. Mat. Univ. Padov.}, 39, 1-34.

\bibitem{foias:temam:nodes}
Foias,~C. and Temam,~R. (1984).
\newblock Determination of the solutions of the {N}avier-Stokes equations by a set of nodal values.
\newblock {\it Math. Comput.}, 43(167), 117-133.

\bibitem{foias:mondaini:titi}
Foias,~C., Mondaini,~C., and Titi,~E.S. (2016).
\newblock A discrete data assimilation scheme for the solutions of the 2D Navier-Stokes equations and their statistics.
\newblock {\it SIAM J. Appl. Dyn. Syst.}, 15(4), 2109-2142.

%%\bibitem{fkjt1}
%%C.~Foias, M.S.~Jolly, R.~Kravchenko, and E.S.~Titi.
%%\newblock A determining form for the 2D Navier-Stokes equations-the Fourier modes case.
%%\newblock{\it J. Math. Phys.}, 53, 115623, 2012.
%%
%%\bibitem{fkjt2}
%%C.~Foias, M.S.~Jolly, R.~Kravchenko, and E.S.~Titi.
%%\newblock A unified approach to determining forms for the 2D Navier-Stokes equations-the general interpolants case.
%%\newblock{\it Uspekhi Matematicheskikh Nauk}, 69(2), 117-200, 2014.

\bibitem{gesho:olson:titi}
Gesho,~M., Olson,~E., Titi,~E.S. (2016).
\newblock A Computational Study of a Data Assimilation Algorithm for the Two-dimensional Navier-Stokes Equations.
\newblock {\it Commun. Comput. Phys.}, 19(4), 1094-1110.

\bibitem{ghil:shkoller:yangarber}
Ghil,~M., Shkoller,~B., and Yangarber,~V. (1977).
\newblock A balanced diagnostic system compatible with a barotropic prognostic model.
\newblock {\it Mon. Wea. Rev}, {\bf 105}, 1223--1238.

\bibitem{ghil:halem:atlas}
Ghil,~M., Halem,~M., and Atlas,~R. (1978).
\newblock Time-continuous assimilation of remote-sounding data and its effect on weather forecasting.
\newblock {\it Mon. Wea. Rev.}, {\bf 107}, 140--171.

\bibitem{graf}
Grafakos,~L. (2008).
\newblock Classical Harmonic Analysis, Second Edition.
\newblock {\it Springer Graduate Texts in Mathematics 249}.

%\bibitem{graf:kalt}
%L.~Grafakos and N.J.~Kalton.
%\newblock The {M}arcinkiewicz mutliplier condition for bilinear operators.
%\newblock {\it Studia Mathematica}, 146(2), 115-156, 2001.
%
%\bibitem{gk}
%Z.~Gruji\'c and I.~Kukavica.
%\newblock Space analyticity for the {N}avier-{S}tokes and related equations
%  with initial data in $L^p$.
%\newblock {\it Journal of Functional Analysis}, 152:447--466, 1998.
%
%
%\bibitem{hkr}
%W.~Henshaw, H.~Kreiss, and L.~Reyna.
%\newblock Smallest scale estimates for the {N}avier-{S}tokes equations for incompressible fluids.
%\newblock {\it Archive of Rational Mechanics and Analysis}, 112(1):21--24, 1990.
%
%\bibitem{hmidi:keraani}
%T.~Hmidi and S.~Keraani.
%\newblock Global solutions of the super-critical 2{D} quasi-geostrophic equation in {B}esov spaces.
%\newblock {\it Advances in Mathematics}, 214, 618-638, 2007.
%

%\bibitem{ju:qg:crit:sob}
%N.~Ju.
%\newblock {E}xistence and {U}niqueness of the {S}olution to the {D}issipative 2{D} {Q}uasi-{G}eostrophic {E}quations in the {S}obolev {S}pace.
%\newblock {\it Communications in Mathematical Physics}, 251, 365-376, 2004.

\bibitem{held}
Held,~I.M., Pierrehumbert,~R.T., Garner,~S.T., and Swanson,~K.L. (1995).
\newblock Surface quasi-geostrophic dynamics.
\newblock {\it J. Fluid Mech.}, 282, 1-20.

\bibitem{hls}
Hoang,~V.H., Law,~K.J.H., and Stuart,~A.M. (2014).
\newblock Determining white noise forcing from Eulerian observations in the Navier-Stokes equation.
\newblock {\it Stoch PDE: Anal Comp}, 2, 233-261.

\bibitem{hoteit}
Altaf,~M.U., Titi,~E.S., Gebrael,~T., Knio,~O., Zhao,~L., McCabe,~M.F., and Hoteit,~I. (2015).
\newblock Downscaling the 2D B\'enard convection equations using continuous data assimilation.
\newblock {\underline{arXiv:1512.0467}}, 1-31 (submitted).

\bibitem{hurri-syrj}
Hurri-Syrj\"anen,~R. and V\"ah\"akangas,~A.V. (2013).
\newblock On fractional Poincar\'e inequalities.
\newblock {J. Anal. Math.}, 120, 85-104.

\bibitem{jolly:martinez:olson:titi}
Jolly,~M.S., Martinez,~V.R., Olson,~E., and Titi,~E.S.
\newblock Continuous data assimilation with blurred-in-time measurements of the surface quasi-geostrophic equation.
\newblock {(in preparation)}.

%%\bibitem{jolly:sadigov:titi}
%%M.S.~Jolly, T.~Sadigov, and E.S.~Titi.
%%\newblock A determining form for the damped driven nonlinear Schr\"odinger equation-Fourier modes case.
%%\newblock {\it J. Differ. Equations}, 258, 2711-2744, 2015.

%\bibitem{jmt}
%M.S.~Jolly, V.R.~Martinez, and E.S.~Titi.
%\newblock A data assimilation algorithm via feedback control for the 2{D} {S}{Q}{G} equation.
%\newblock (preprint), December 2015.

\bibitem{jones:titi:nodes} 
Jones,~D.A. and Titi,~E.S. (1992).
\newblock On the number of determining nodes for the 2D Navier-Stokes equations.
\newblock {\it Journal of Math. Anal.} 168, 72-88.

\bibitem{jones:titi:volelts} 
Jones,~D.A. and Titi,~E.S. (1992).
\newblock Determining finite volume elements for the 2D Navier-Stokes equations.
\newblock {\it Phys. D}, 60, 165-174.
	
\bibitem{jones:titi} 
Jones,~D.A. and Titi,~E.S. (1993).
\newblock Upper bounds on the number of determining modes, nodes, and volume elements for the Navier-Stokes equations.
\newblock {\it Indiana Math. J.}, 42, 875-887.

%\bibitem{ju:super}
%N.~Ju.
%\newblock Global {S}olutions to the {T}wo {D}imensional {Q}uasi-{G}eostrophic {E}quation with {C}ritical or {S}uper-{C}ritical {D}issipation.
%\newblock {\it Mathematische Annalen}, 334, 627-642, 2006.

\bibitem{ju:qgattract}
Ju,~N. (2005).
\newblock The {m}aximum {p}rinciple and the global {a}ttractor for the {d}issipative 2{D} {q}uasi-{g}eostrophic {e}quations.
\newblock {\it Commun. Math. Phys.}, 255, 161-181.

\bibitem{kato:ponce}
Kato,~T. and Ponce,~G. (1988).
\newblock Commutator estimates and the Euler and Navier-Stokes equation.
\newblock {\it Comm. Pure. Appl. Math}. 41(7), 891-907.

\bibitem{kenig:ponce:vega}
Kenig,~C.E., Ponce,~G., and Vega,~L. (1991).
\newblock Well-posedness of the initial value problem for the Korteweg-de-Vries equation.
\newblock {\it J. Am. Math. Soc.}, 4(2), 323-347.

%\bibitem{katznel}
%Y.~Katznelson.
%\newblock An introduction to harmonic analysis.
%\newblock {\it Cambridge University Press}, 2004.
%
%\bibitem{kiselev:note}
%A.~Kiselev.
%\newblock Some recent results on the critical surface quasi-geostrophic
%  equation: a review.
%\newblock {\it Proc. Sympos. Appl. Math.}, 67(Part 1):105--122, 2009.

%\bibitem{kalantarov:titi}
%V.K.~Kalantarov and E.S.~Titi.
%\newblock Finite-parameters feedback control for stabilizing damped nonlinear wave equations.
%\newblock {Contemporary Mathematics, AMS}, 2015 (to appear), {\underline{arXiv:1501.00556v1}}

\bibitem{kis:naz}
Kiselev,~A. and Nazarov,~F. (2010).
\newblock Variation on a theme of {C}affarelli and {V}asseur.
\newblock {\it J. Math. Sci.}, 166(1), 31-39.

\bibitem{kis:naz:vol}
Kiselev,~A., Nazarov,~F., and Volberg,~A. (2007).
\newblock Global well-posedness for the critical 2{D} dissipative quasi-geostrophic equation.
\newblock {\it Invent. Math.}, 167, 445-453.

\bibitem{law:alonso:shukla:stuart}
Law,~K.J.H., Sanz-Alonso,~D., Shukla,~A., and Stuart,~A.M. (2016).
\newblock Filter accuracy for the Lorenz 96 model: Fixed versus adaptive observation operators.
\newblock {\it Phys. D}, 325, 1-13.

\bibitem{mark:titi:trab}
Markowich,~P.A., Titi,~E.S., and Trabelsi,~S. (2016).
\newblock Continuous data assimilation for the three-dimensional Brinkman-Forchheimer-extended Darcy model.
\newblock \textit{Nonlinearity}, 24(4), 1292-1328.

%\bibitem{olson:titi}
%E.~Olson and E.S.~Titi.
%\newblock Determining modes for continuous data assimilation in 2D turbulence.
%\newblock {\it Journal of Statistical Physics}, 113(516), 800-840, 2003.

\bibitem{pedlosky}
Pedlosky,~J. (1987).
\newblock {\it Geophysical Fluid Dynamics}.
\newblock Springer-Verlag, New York.

\bibitem{sobolev}
Sobolev,~S.L. (1963).
\newblock {Applications of Functional Analysis in Mathematical Physics}.
\newblock \textit{Translations of Mathematical Monographs}, American Mathematical Society, Vol. 7.

%\bibitem{kraich}
%R.~Kraichnan.
%\newblock Inertial ranges in two-dimensional turbulence.
%\newblock {\it Phys. Fluids}, 10:1417--1423, 1967.
%
%\bibitem{kuka}
%I.~Kukavica.
%\newblock On the dissipative scale for the {N}avier-{S}tokes equation.
%\newblock {\it Indiana University Mathematics Journal}, 47(3):1129--1154, 1998.
%
%\bibitem{kv1}
%I.~Kukavica and V.~Vicol.
%\newblock On the radius of analyticity of solutions to the three-dimensional
%  {E}uler equations.
%\newblock {\it Proceedings of the American Mathematical Society},
%  137(2):669--677, 2009.
%
%\bibitem{landau}
%L.D.~Landau and E.M.~Lifshitz.
%\newblock Fluid Mechanics (Course of Theoretical Physics 6)
%\newblock {\it Oxford: pergamon}, 1959.
%
%\bibitem{lt}
%A.~Larios and E.S.~Titi.
%\newblock On the {H}igher-order global regularity of the inviscid
%  {V}oigt-regularization of three-dimensional hydrodyanmic models.
%\newblock {\it Discrete and Continuous Dynamical Systems Series B},
%  14(2):603--627, 2010.
%
%\bibitem{lmr}
%P.G.~Lemari\'e-Rieusset.
%\newblock Recent developments in the {N}avier-{S}tokes problem.
%\newblock {\it Chapman \& Hall/CRC Research Notes in Mathematics}, 431, 2002.
%
%\bibitem{lo}
%C.~D. Levermore and M.~Oliver.
%\newblock Analyticity of solutions for a generalized {E}uler equation.
%\newblock {\it Journal of Differential Equations}, 133:329--339, 1997.
%
%\bibitem{marchioro}
%C.~Marchioro.
%\newblock An example of turbulence for any {R}eynolds number.
%\newblock {\it Comm. Math. Phys.}, 108(4):647--651,1987
%
%\bibitem{miura}
%H.~Miura.
%\newblock Dissipative {Q}uasi-{G}eostrophic {E}quation for {L}arge {I}nitial {D}ata in the {C}ritical {S}obolev {S}pace.
%\newblock {\it Communications in Mathematical Physics}, 267, 141-157, 2006.
%
%\bibitem{ot2}
%M.~Oliver and E.~S. Titi.
%\newblock {R}emark on the rate of decay of higher order derivatives for
%  solutions to the {N}avier-{S}tokes equations in $\mathbb{R}^n$.
%\newblock {\it Journal of Functional Analysis}, 172:1--18, 2000.
%
%\bibitem{ot}
%M.~Oliver and E.S.~Titi.
%\newblock {O}n the domain of analyticity for solutions of second order analytic
%  nonlinear differential equations.
%\newblock {\it Journal of Differential Equations}, 174:55--74, 2001.

\bibitem{resnick}
Resnick,~S.G. (1995).
\newblock Dynamical problems in non-linear advective partial differential equations.
\newblock {\it PhD thesis}, 1-86.

\bibitem{runst:sick}
Runst,~T. and Sickel,~W. (1996).
\newblock Sobolev {S}paces of {F}ractional {O}rder, {N}emytskij {O}perators, and {N}onlinear {P}artial {D}ifferential {E}quations.
\newblock {\it De Gruyter Series in Nonlinear Analysis and Applications}, Walter de Gruyter, 3.

%\bibitem{stein}
%E.~Stein.
%\newblock Singular Integrals and Differentiability Properties of Functions.
%\newblock {\it Princeton University Press}, Princeton, New Jersey, 1970.

%\bibitem{silv1}
%L.~Silvestre.
%\newblock On the differentiability of the solution to an equation with drift
%  and fractional diffusion.
%\newblock {\it Indiana University Mathematical Journal.}, 61(2):557--584, 2012.

%\bibitem{silv:vicol:zlat}
%L.~Silvestre, V.~Vicol, and A.~Zlatos.
%\newblock On the loss of continuity for super-critical drift-diffusion
%  equations.
%\newblock {\underline{arXiv:1205.4364v2}}, pp.1-27, 2012.

%\bibitem{temamnse}
%R.~Temam.
%\newblock Navier-{S}tokes equations: theory and numerical analysis.
%\newblock {\it North-Holland Publishing Company}, 1977.
%
%\bibitem{weissler1}
%F.~B. Weissler.
%\newblock The {N}avier-{S}tokes initial value problem in $l^p$.
%\newblock {\it Archive for Rational Mechanics and Analysis}, 74:219--230, 1980.
%
%\bibitem{wu:besov}
%J.~Wu.
%\newblock Global solutions of the 2{D} dissipative quasi-geostrophic equation in {B}esov spaces.
%\newblock {\it Siam J. Math. Anal.}, 36(3), 1014-1030, 2005.

%\bibitem{wu:lowerbounds}
%J.~Wu.
%\newblock Lower bounds for an integral involving fractional {L}aplacians and the generalized {N}avier-{S}tokes equations in {B}esov spaces.
%\newblock {\it Communications in Mathematical Physics}, 263, 803-831, 2005.

%\bibitem{wu:crit:supercrit}
%J.~Wu.
%\newblock The two-dimensional quasi-geostrophic equation with critical or supercritical dissipation.
%\newblock {\it Nonlinearity}, 18, 139-154, 2005.

\bibitem{workman}
Workman,~J.T. (2008).
\newblock End-point estimates and multi-parameter paraproducts on higher-dimensional tori.
\newblock {\it PhD thesis}, 1-150.

\end{thebibliography}

%\printbibliography

\end{document}